\newtheorem{Theorem}{Theorem}[section]
\newtheorem*{TheoremNoNumber}{Theorem}
\newtheorem{Definition}{Definition}[section]
\newtheorem{Lemma}[Theorem]{Lemma}
\newtheorem{Corollary}[Theorem]{Corollary}
\newtheorem{Proposition}[Theorem]{Proposition}
\newtheorem{Remark}[Theorem]{Remark}
\newcommand{\mR}{\mathbb{R}}                    
\newcommand{\abs}[1]{\lvert #1 \rvert}          
\newcommand{\norm}[1]{\lVert #1 \rVert}         
\newcommand{\Mi}{M^{\mathrm{int}}}
\newcommand{\Wi}{W^{\mathrm{int}}}
\newcommand{\mA}{\mathcal{A}}
\newcommand{\mB}{\mathcal{B}}
\newcommand{\R}{\mathbb{R}}
\newcommand{\Tr}[1]{\mbox{Tr}\left( #1 \right)}
\newcommand{\p}{\partial}
\newcommand{\eps}{\varepsilon}
\newcommand{\Norm}[1]{\left\lVert#1\right\rVert}
\title{The Poisson embedding approach to the Calder\'on problem}
\author{Matti Lassas}
\address{Department of Mathematics and Statistics, University of Helsinki}
\email{matti.lassas@helsinki.fi}
\author{Tony Liimatainen}
\address{Department of Mathematics and Statistics, University of Helsinki}
\email{tony.liimatainen@helsinki.fi}
\author{Mikko Salo}
\address{Department of Mathematics and Statistics, University of Jyvaskyla}
\email{mikko.j.salo@jyu.fi}
\begin{document}

\begin{abstract}
We introduce a new approach to the anisotropic Calder\'on problem, based on a map called \emph{Poisson embedding} that identifies the points of a Riemannian manifold with distributions on its boundary. We give a new uniqueness result for a large class of Calder\'on type inverse problems for quasilinear equations in the real analytic case. The approach also leads to a new proof of the result of~\cite{LassasUhlmann} solving the Calder\'on problem on real analytic Riemannian manifolds. The proof uses the Poisson embedding to determine the harmonic functions in the manifold up to a harmonic morphism. The method also involves various Runge approximation results for linear elliptic equations.
\end{abstract}


\maketitle



\date{\today}


\maketitle



\section{Introduction}\label{sec_introduction}
The anisotropic Calder\'on problem consists in determining a conductivity matrix of a medium, up to a change of coordinates fixing the boundary, from electrical voltage and current measurements on the boundary. In dimensions $n \geq 3$ this problem may be written geometrically as the determination of a Riemannian metric on a compact manifold with boundary from Dirichlet and Neumann data of harmonic functions. More precisely, if $(M,g)$ is a compact oriented Riemannian manifold with smooth boundary, we consider the Dirichlet problem for the Laplace-Beltrami operator $\Delta_g$, 
\[
\Delta_g u = 0 \text{ in $M$,} \qquad u|_{\partial M} = f
\]
and denote $u=u_f$. We define the Dirichlet-to-Neumann map (DN map) 
\[
\Lambda_g: C^{\infty}(\partial M) \to C^{\infty}(\partial M), \ \ \Lambda_g f = \partial_{\nu} u_f|_{\partial M}
\]
where $\partial_{\nu}$ is the normal derivative on $\partial M$. One has the coordinate invariance 
\[
\Lambda_g = \Lambda_{\phi^* g}
\]
for any diffeomorphism $\phi: M \to M$ fixing the boundary. 

It is a long-standing conjecture \cite{LU} that if two Riemannian manifolds $(M_1,g_1)$ and $(M_2,g_2)$ with mutual boundary have the same DN maps, then there is a boundary fixing isometry between the manifolds. In this paper we give a candidate for this isometry:

\smallskip
\noindent\framebox{
    \parbox{\textwidth}{%
    \smallskip
    The points $x_1\in M_1$ and $x_2\in M_2$ are to be identified if and only if for every $f\in C^\infty(\p M)$ the harmonic extensions $u_f^1$ and $u_f^2$ of $f$ to $(M_1, g_1)$ and $(M_2, g_2)$ satisfy
    \begin{equation}\label{boxref}
    u_f^1(x_1)=u_f^2(x_2).
    \end{equation}
    }%
}
\smallskip

For general Riemannian manifolds such an identification does not exist. However, if such an identification exists, we show that it induces a mapping $M_1\to M_2$, and that this mapping is the boundary fixing isometry required for the solution of the Calder\'on problem. The Calder\'on problem thus reduces to showing that the equality of DN maps implies that the above identification exists. In this case we say that the manifolds can be \emph{identified by their harmonic functions}.


We introduce a tool for studying the existence of this identification. This will be an embedding $P$ of a Riemannian manifold into the linear dual of the space of smooth functions on its boundary. If $(M,g)$ is a compact Riemannian manifold with $C^\infty$ boundary and $x\in M$, then the value of $P$ at $x$ is a linear functional given by the formula
\[
P(x)f=u_f(x),
\]
where $u_f \in C^{\infty}(M)$ solves the Dirichlet problem
 \begin{align*}
 \Delta_gu_f&=0 \text{ in } M, \\
 u_f&=f \text{ on } \p M.
 \end{align*}
 We call the mapping $P$ the \emph{Poisson embedding}, and it will be the main object of study of this paper. If $P_1(M_1) \subset P_2(M_2)$, then
\begin{equation}\label{tobedefed}
P_2^{-1}\circ P_1
\end{equation}
is a well-defined map $M_1\to M_2$, and we prove that it is a boundary fixing isometry if $n\geq 3$. If $n=2$, it is a conformal mapping. The condition $P_1(M_1) \subset P_2(M_2)$ is equivalent to existence of the identification~\eqref{boxref}.

There are different points of view to the Poisson embedding:
\begin{enumerate}
\item[1.] 
The Poisson formula for solutions of the Dirichlet problem gives that 
\[
P(x) f = \int_{\p M} \partial_{\nu_y} G(x,y) f(y) \,dS(y)
\]
where $G(x,y)$ is the Green function for $\Delta_g$ in $M$ and $\partial_{\nu_y} G(x,\,\cdot\,)$ is the Poisson kernel. Thus $P$ identifies the point $x$ in $M$ with the Poisson kernel $\partial_{\nu_y} G(x,\,\cdot\,)$ on $\p M$ (hence the name Poisson embedding).
\item[2.] 
One also has the formula 
\[
P(x) f = \int_{\p M} f \,d\omega^x
\]
where $\omega^x$ is the \emph{harmonic measure} for $\Delta_g$ at $x$. Thus $P$ identifies points in $M$ with measures on $\p M$; points of $\p M$ are identified with the corresponding Dirac measures, and points in $M^{\mathrm{int}}$ are identified with $C^{\infty}$ functions since $d\omega^x = \partial_{\nu_y} G(x,\,\cdot\,) \,dS$ in this case.
\item[3.]
For $x \in M^{\mathrm{int}}$ one has $G(x,\,\cdot\,) = 0$ on $\p M$, and thus the knowledge of $\partial_{\nu_y} G(x,\,\cdot\,)|_{\p M}$ determines the Green function $G(x,\,\cdot\,)$ in $M$ by elliptic unique continuation. Thus, instead of identifying points of $M$ with the corresponding Green functions in $M$ as in \cite{LassasUhlmann}, we use the normal derivatives of the Green functions on $\p M$. This change of point of view allows one to work on the boundary, which is natural since the measurements are given on $\p M$.
\end{enumerate}

The problem of finding the isometry, or conformal mapping if $n=2$, from the knowledge of the DN map is known as the geometric Calder\'on problem. This problem has been solved in \cite{LassasUhlmann} in the following cases:
\begin{TheoremNoNumber}
Let $(M,g)$ be a compact connected $C^{\infty}$ Riemannian manifold with $C^{\infty}$ boundary.
\begin{itemize}
\item[(a)] 
If $n=2$, the DN map $\Lambda_g$ determines the conformal class of $(M,g)$.
\item[(b)] 
If $n \geq 3$ and if $M$, $\partial M$ and $g$ are real-analytic, then the DN map $\Lambda_g$ determines $(M,g)$.
\end{itemize}
\end{TheoremNoNumber}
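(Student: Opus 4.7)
My plan is to leverage the Poisson embedding framework already established: the theorem will follow once I prove two facts. First, $P_1(M_1) = P_2(M_2)$ whenever $\Lambda_{g_1} = \Lambda_{g_2}$ (using real-analyticity if $n\geq 3$), so that the candidate map $\Phi = P_2^{-1}\circ P_1: M_1\to M_2$ in \eqref{tobedefed} is well defined, bijective, smooth, and fixes $\p M$. Second, the relation $u_f^1 = u_f^2\circ \Phi$ for every $f\in C^\infty(\p M)$ upgrades $\Phi$ to an isometry in dimension $n\geq 3$ and to a conformal diffeomorphism in dimension $n=2$.

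For the first fact, I would begin with classical boundary determination of the DN map (going back to Kohn--Vogelius and Lee--Uhlmann): $\Lambda_{g_1} = \Lambda_{g_2}$ determines the Taylor jet of the metric at $\p M$ in boundary normal coordinates. Under the real-analytic hypothesis this Taylor jet forces $g_1$ and $g_2$ to coincide on a whole collar neighborhood of $\p M$, yielding a boundary-fixing real-analytic isometry $\Phi_0$ between collars in $M_1$ and $M_2$, and hence $P_1 = P_2\circ \Phi_0$ near $\p M$. The main task, which I expect to be the central obstacle, is to globalize this local identification. Since the Green function of a real-analytic metric is real-analytic off the diagonal, the Poisson embeddings $P_1, P_2$ are real-analytic on the interior; by analytic continuation along real-analytic paths from $\p M$ into the interior, the local identification propagates. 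To rule out monodromy and to ensure that the equation $P_1(x_1) = P_2(x_2)$ uniquely determines $x_2\in M_2$, I would invoke Runge approximation for harmonic functions on $(M_2, g_2)$---a central technical theme of the paper---which allows arbitrary harmonic functions on compact subsets to be approximated by harmonic extensions of boundary data, and hence separates points of $M_2$ by the functionals $P_2(x)$.

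Once $\Phi$ is globally defined, the pull-back $h = \Phi^*g_2$ has the property that a function on $M_1$ is $g_1$-harmonic if and only if it is $h$-harmonic (with the same boundary data), so $\Phi$ is a harmonic morphism. By the Fuglede--Ishihara theorem, a diffeomorphic harmonic morphism is simultaneously conformal and harmonic as a map; hence $h = \lambda g_1$ for a positive function $\lambda$. In $n=2$ this is precisely the conformal-class statement of part~(a). In $n\geq 3$, conformality combined with harmonicity of $\Phi$ as a map forces $\lambda$ to be constant, and the boundary condition $\Phi|_{\p M} = \id$ then forces $\lambda\equiv 1$, so $\Phi$ is an isometry, yielding part~(b).

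The hardest step is the global analytic continuation: the Poisson embedding gives a clean reformulation as equality of distributions on $\p M$, but turning a local collar isometry into a global one requires real-analyticity (to carry out continuation) together with careful use of Runge-type approximation to guarantee single-valuedness. The fact that the smooth (not real-analytic) case in $n\geq 3$ remains open is a clear indication that this is precisely where the hypothesis is essential.
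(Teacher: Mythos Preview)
Your outline for part~(b) ($n\geq 3$) is essentially the paper's own strategy: boundary determination gives a collar identification, real-analyticity propagates it globally via an open--closed argument, and the resulting harmonic morphism is then shown to be a homothety with $\lambda=1$. Two differences are worth noting. First, your ``analytic continuation along real-analytic paths'' is the right intuition but is left vague; the paper makes this precise by working in \emph{harmonic coordinates}. At a boundary point $x_1$ of the set $B$ where the identification holds, one chooses global harmonic functions $U=(u^2_{f_1},\dots,u^2_{f_n})$ giving coordinates near $x_2=J(x_1)$, and sets $V=(u^1_{f_1},\dots,u^1_{f_n})$; then $J=U^{-1}\circ V$ on $B$, and real-analyticity of the coordinate representations $u^1_f\circ V^{-1}$ and $u^2_f\circ U^{-1}$ in $\R^n$ extends the identity across $x_1$. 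This is exactly the analytic continuation you describe, but implemented in finite dimensions rather than in $\mathcal{D}'(\partial M)$. Second, for the step ``harmonic morphism $\Rightarrow$ homothety'' you invoke Fuglede--Ishihara, which is valid; the paper instead gives a self-contained argument in harmonic coordinates (prescribing Hessians via Runge approximation to show $g_1^{-1}$ and $g_2^{-1}$ are proportional, then using the contracted Christoffel symbol to force the conformal factor constant). Your route is shorter if one is willing to quote the classical result; the paper's route keeps everything internal to the Poisson-embedding framework.

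There is, however, a genuine gap in your treatment of part~(a). Your globalization argument explicitly relies on real-analyticity of the metric (Green functions real-analytic off the diagonal, analytic continuation of $P_j$), but in dimension~$2$ the manifolds are only assumed $C^\infty$. Likewise, your collar identification via the Taylor jet of $g$ in boundary normal coordinates does not work in 2D, since boundary determination there only recovers the metric on $\partial M$ itself (and only its conformal class is well-defined). The paper handles the 2D case by a genuinely different mechanism: it passes to \emph{isothermal coordinates}, in which the Laplace--Beltrami equation becomes the flat Euclidean Laplace equation. Harmonic functions are then real-analytic \emph{in those coordinates} regardless of the regularity of $g$, and both the near-boundary determination and the open--closed extension argument go through using unique continuation for $\Delta_{\R^2}$. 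You need to supply this isothermal-coordinate argument (or an equivalent substitute) to close the 2D case.
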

As a first application of our techniques we give new proofs of these results. The proofs consist of three steps.

\begin{enumerate}
\item[(1)] The first step is to determine the harmonic functions near the boundary, using a standard boundary determination result~\cite{LU} and real-analyticity. \item[(2)] The second step uses a unique continuation argument for harmonic functions where the manifold and harmonic functions are continued simultaneously. The use of harmonic coordinates (see e.g.~\cite{DeturckKazdan}) and the Runge approximation property are key ingredients in this step. 
\item[(3)] Finally, we show that we can read the metric (conformal metric if $n=2$) from the knowledge of harmonic functions.
\end{enumerate}

The works \cite{LU, LassasUhlmann, LTU} study the Calder\'on problem on real-analytic manifolds, and an analogous result for Einstein manifolds (which are real-analytic in the interior) is proved in \cite{GuSaBar}. See~\cite{LLS} for a recent result for a conformal Calder\'on problem. It remains a major open problem to remove the real-analyticity condition in dimensions $\geq 3$; for recent progress in the case of conformally transversally anisotropic manifolds see \cite{DKSaU, DKLS} and also \cite{DKLLS17, GST18} for the linearized problem. Other interesting approaches for related problems may be found in \cite{Belishev2009} and \cite{Cekic}, and counterexamples for disjoint data are given in \cite{DKN}.

The inverse problems may not be uniquely solvable even when the metric is a priori known to be real analytic.
Indeed,   the above theorem proven in  \cite{LassasUhlmann} does not hold for non-compact manifolds in the
two-dimensional case. It is shown in \cite{LTU} that there are a compact and a non-compact
 complete, two-dimensional manifold for which the
boundary measurements are the same. This counterexample was obtained using a blow-up map. Analogous non-uniqueness results 
have been studied in the invisibility cloaking, where an arbitrary object is hidden from measurements by coating it with
a material that corresponds to a degenerate Riemannian metric 
\cite{GLU1,GKLU1,GKLU2,GKLU3,DLU}.

Our main tool for studying the Poisson embedding and constructing the metric from harmonic functions is the Runge approximation property. This property allows one to approximate local solutions to an elliptic equation by global solutions. In particular this implies that harmonic functions separate points and have prescribed Taylor expansions modulo natural constraints. There are many results of this type in the literature, see e.g.\ \cite{Lax, Malgrange, Browder, RulandSalo}. We require specific approximation results for uniformly elliptic operators, and for completeness they will be given in Appendix ~\ref{runge_apprx_sec} together with proofs. 

\subsection{An inverse problem for quasilinear equations}

As a new result we prove a determination result for Calder\'on type inverse problems for quasilinear equations on manifolds $(M,g)$ with boundary $\p M$, including models that are both anisotropic and nonlinear. We consider the equation
\begin{equation}\label{qlineq}
Q(u)=f \mbox{ in } M, \quad u=0 \mbox{ on } \p M, 
\end{equation}
where $f\in C_c^\infty(W)$, $W\subset M$ is a fixed open set, and $Q$ is a uniformly elliptic quasilinear operator of the form
\begin{equation}\label{Qform}
Qu(x)=\mathcal{A}^{ab}(x,u(x),du(x))\nabla_a\nabla_bu(x)+\mathcal{B}(x,u(x),du(x)).
\end{equation}
Here and below we use Einstein summation rule, which means that repeated indices are always summed over $1,\ldots,n$.
The source-to-solution mapping $S:C_c^\infty(W)\to C^\infty(W)$ for this problem is defined as
\[
 S(f)=u_f|_W
\]
where $u_f$ solves~\eqref{qlineq}. We assume that $(M,g)$ and the matrix valued function $\mathcal{A}$ and the function $\mathcal{B}$ are real analytic. In this case we show that the source-to-solution mapping, even for small data, determines the manifold and the coefficients $\mathcal{A}$ and $\mathcal{B}$ up to a diffeomorphism and a built in ``gauge symmetry'' of the problem:

\begin{Theorem} \label{thm_intro_quasilinear}
Let $(M_1,g_1)$ and $(M_2,g_2)$ be compact connected real analytic Riemannian manifolds with mutual boundary and assume that $Q_j$, $j=1,2$, are quasilinear uniformly elliptic operators of the form~\eqref{Qform} satisfying \eqref{elliptic_a}--\eqref{l_assumption}. Assume that the coefficients $\mathcal{A}_j$, $\mathcal{B}_j$ are real analytic in all their arguments. 
 
Let $W_j\subset M_j^{\mathrm{int}}$, $j=1,2$, be open sets so that there is a diffeomorphism $\phi:W_1\to W_2$, and assume that the local source-to-solution maps $S_j$ for $Q_j$ on $W_j$ agree,
\[
\phi^*S_2f=S_1\phi^*f,
\]
for small $f\in C^\infty_c(W_2)$. 

Then there is a real analytic diffeomorphism $J: \Mi_1 \to \Mi_2$ such that
\[
\mathcal{A}_1=J^*\mathcal{A}_2:=\mathcal{A}
\]
and 
\begin{equation}\label{gaugesym}
\mathcal{B}_1-J^*\mathcal{B}_2=\mathcal{A}^{ab}(\Gamma(g)_{ab}^k-\Gamma(J^*g)_{ab}^k)\sigma_k.
\end{equation}
The mapping $J$ satisfies
\[
J|_{W_1}=\phi: W_1\to W_2.
\]
\end{Theorem}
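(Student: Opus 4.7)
My strategy is to reduce the quasilinear problem to a linear elliptic one via first-order linearization at $u = 0$, apply an analogue of the Poisson embedding for the resulting linear operator to extract a real analytic diffeomorphism $J : M_1^{\mathrm{int}} \to M_2^{\mathrm{int}}$, and then use higher-order linearizations to recover the full $(u, du)$-Taylor expansions of $\mathcal{A}_j, \mathcal{B}_j$. To begin, under the assumed structural conditions (in particular $\mathcal{B}_j(x, 0, 0) = 0$, so $u \equiv 0$ solves $Q_j(u) = 0$), for sources $\varepsilon f$ the solution expands as $u_{\varepsilon f} = \varepsilon v_1 + \varepsilon^2 v_2 + \cdots$, where $v_1$ solves the linearized Dirichlet problem $L_j v_1 = f$, $v_1|_{\partial M_j} = 0$, with
\[
L_j v = \mathcal{A}_j^{ab}(x,0,0)\,\nabla_a \nabla_b v + (\partial_{p_a} \mathcal{B}_j)(x,0,0)\,\nabla_a v + (\partial_u \mathcal{B}_j)(x,0,0)\, v
\]
a uniformly elliptic real analytic operator. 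Differentiating the hypothesis $\phi^* S_2 f = S_1 \phi^* f$ at $\varepsilon = 0$ gives $\phi^* DS_2 = DS_1 \phi^*$, where $DS_j$ is the source-to-solution map of $L_j$ on $W_j$.

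Next, identify each $x \in M_j^{\mathrm{int}}$ with the functional $f \mapsto DS_j(f)(x) = \int_{M_j} G_j(x,y) f(y)\, dV_{g_j}(y)$ on $C_c^\infty(W_j)$, where $G_j$ is the Green function of $L_j$ with zero Dirichlet data. This is the source-to-solution analogue of the Poisson embedding, and two points have the same image exactly when $G_j(x,\cdot) = G_j(x',\cdot)$ on $W_j$, which by unique continuation forces $x = x'$. Using boundary determination for $L_j$ near $W_j$, Runge approximation for $L_j$ (Appendix~\ref{runge_apprx_sec}), harmonic coordinates for the Riemannian metric defined by the principal symbol $\mathcal{A}_j(\cdot,0,0)$, and real-analytic continuation of the identification away from $W_j$, one obtains a real analytic diffeomorphism $J : M_1^{\mathrm{int}} \to M_2^{\mathrm{int}}$ with $J|_{W_1} = \phi$ such that pulling $L_2$ back by $J$ reproduces $L_1$ up to the gauge freedom inherent in writing a second-order linear elliptic operator. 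In particular $J^* \mathcal{A}_2(\cdot, 0, 0) = \mathcal{A}_1(\cdot, 0, 0)$.

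The higher-order Taylor coefficients of $\mathcal{A}_j, \mathcal{B}_j$ in the $(u,p)$-variables are then captured by multi-parameter linearizations. For $f = \sum_{i=1}^N \varepsilon_i f_i$, the mixed derivative $w_N := \partial_{\varepsilon_1} \cdots \partial_{\varepsilon_N} u_f|_{\varepsilon = 0}$ satisfies $L_j w_N = F_N^{(j)}(v_1, \ldots, v_N)$, where $F_N^{(j)}$ is a universal polynomial in the first-order linearizations $v_i$ and their covariant derivatives whose coefficients are precisely the order-$N$ Taylor coefficients of $\mathcal{A}_j, \mathcal{B}_j$ at $(x, 0, 0)$. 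Equality of $S_1$ and $S_2$ at every order, combined with Runge approximation to prescribe the jets of the $v_i$ freely at points of $W_1$, recovers each Taylor coefficient of $\mathcal{A}_1, \mathcal{B}_1$ on $W_1$ as the $J$-pullback of the corresponding coefficient of $\mathcal{A}_2, \mathcal{B}_2$; real analyticity in $(x,u,p)$ extends this identification to $W_1 \times \mathbb{R} \times T^*_{W_1} M_1$, and real analyticity in $x$ together with the diffeomorphism $J$ propagates it to all of $M_1^{\mathrm{int}}$. The correction \eqref{gaugesym} appears because $Q_j$ is written with the Levi-Civita connection of $g_j$: when pulling $Q_2$ back by $J$, the connection $\nabla^{J^* g_2}$ differs from $\nabla^{g_1}$ by the Christoffel difference $\Gamma(J^* g_2) - \Gamma(g_1)$, and contracting with $\mathcal{A}^{ab}$ yields exactly the right-hand side of \eqref{gaugesym}.

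The principal obstacle is the second step: the paper's Poisson embedding was developed for the Laplace-Beltrami operator, whereas here one must construct $J$ from the source-to-solution map of a general linear elliptic operator with nontrivial first- and zeroth-order terms, and the measurements are available only on the subdomain $W_j$ rather than on $\partial M_j$. Extending the global analytic continuation argument (harmonic coordinates for the metric defined by the principal symbol, combined with the Runge approximation machinery) to this localized, lower-order-perturbed setting is where the bulk of the analytical work concentrates.
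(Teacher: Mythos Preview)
Your overall architecture---first-order linearization to produce the linear source-to-solution problem, a Poisson-type embedding for that linear operator to construct the real analytic diffeomorphism $J$, and then recovery of the coefficients followed by analytic continuation---matches the paper. But the mechanism you propose for recovering $\mathcal{A}_j,\mathcal{B}_j$ on $W_1$ is genuinely different from what the paper does, and a couple of details in your sketch are off.

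\textbf{How the paper recovers the coefficients.} The paper does \emph{not} use higher-order (multi-parameter) linearization. Instead it observes directly (Lemma~\ref{all_equations}) that for any small $v\in C_c^\infty(\Omega)$ in local coordinates, setting $F:=Q_2(v\circ U_2)$ and invoking uniqueness of small solutions forces $u_F^2=v\circ U_2$; equality of the source-to-solution maps then gives $\tilde Q_1 v=\tilde Q_2 v$ pointwise on $\Omega$. One now simply plugs in the explicit quadratic test functions $v_{(y,c,\sigma,A)}$ of \eqref{testfunction}, varying $(c,\sigma,A)$ freely, and reads off $\mathcal{A}_1=\phi^*\mathcal{A}_2$ and the gauge relation for $\mathcal{B}$ \emph{for all small $(c,\sigma)$ at once}. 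Your route via multilinearization would also work (and is a standard technique in the field), but it requires tracking the universal polynomials $F_N^{(j)}$ and a density argument via Runge at every order, and it only delivers Taylor coefficients at $(c,\sigma)=(0,0)$, so you must appeal to real analyticity in $(c,\sigma)$ one step earlier. The paper's approach is shorter and more elementary; yours is more systematic and would transfer to settings where compactly supported test functions are not solutions.

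\textbf{Two inaccuracies to fix.} First, there is no ``boundary determination for $L_j$ near $W_j$'': the sets $W_j$ lie in the interior, so the argument proceeds entirely through the interior Poisson embedding $R$ of Definition~\ref{Poisson_embed_source_map} and Runge approximation (Proposition~\ref{prop_runge_consequence_interior_source_sec6}); the initial identification on $W_1$ comes directly from the hypothesis $\phi^*S_2^L=S_1^L\phi^*$, not from any boundary result. Second, the coordinates used in the continuation step are not harmonic coordinates for the metric $\mathcal{A}_j(\cdot,0,0)$ but the ``solution coordinates'' of Lemma~\ref{solution_coords}: $n$-tuples $(u_{f_1},\ldots,u_{f_n})$ of global solutions to $L_j u=f_k$ with $f_k$ supported away from the point. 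These are what make the analytic continuation of $J=U^{-1}\circ V$ go through.
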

In Theorem~\ref{thm_intro_quasilinear}, the maps $\mathcal{A}$, $\mathcal{B}_1$ and $J^*\mathcal{B}_2$ have $(x,c,\sigma)\in M_1\times \R \times T^*_xM_1$ as their argument, and $\Gamma(g)_{ab}^k$ and $\Gamma(J^*g)_{ab}^k$ refers to the Christoffel symbols of the metrics $g$ and $J^*g$ respectively. 

The reason why $\mathcal{B}$ can not be determined independently of $\mathcal{A}$ and $\Gamma$, as presented in~\eqref{gaugesym}, is due to the fact that the covariant Hessian in the definition of $Q$ contains first order terms.

The proof proceeds by linearizing the problem and using a slightly modified Poisson embedding for the linearized equation. Using the linearization we can first use the Poisson embedding approach to construct the manifold, but not yet the coefficients $\mathcal{A}$ and $\mathcal{B}$. The source-to-solution mapping determines the coefficients in the measurement set $W$. Since the manifold is now known, the proof is completed by determining the coefficients on the whole manifold by analytic continuation from the set $W$. The linearization method goes back to \cite{I1} and has been used in various inverse problems for nonlinear equations, including anisotropic problems \cite{SuU, HS}. We refer to \cite{Sun_survey, SaloZhong} for further references. 

\subsection{Further aspects of Poisson embedding}

The governing principle of this paper is that instead of trying to find the metric in the anisotropic Calder\'on problem directly, one can focus on finding the harmonic functions. This principle is implemented by Poisson embedding and by the fact, which we prove, that the metric can be determined from the knowledge of harmonic functions. We will now discuss in more detail some aspects the Poisson embedding approach.

If $J$ is an isometry $(M_1,g_1)\to (M_2,g_2)$, then $J$ can be locally represented in various coordinate charts. Useful coordinate charts include boundary normal coordinates and harmonic coordinates~\cite{DeturckKazdan, Taylor_isometry}. In the study of the Calder\'on problem, boundary normal coordinates have been used to locally identify real analytic manifolds near boundary points by showing that in boundary normal coordinates the metrics $g_1$ and $g_2$ agree. See e.g.~\cite{LU, LassasUhlmann, LTU, GuSaBar, LLS}. However, local representations do not yield a global candidate for the isometry $J$ required for solving the Calder\'on problem. In contrast, the Poisson embeddings $P_j$ are globally defined objects, and they yield a candidate $P_2^{-1} \circ P_1$ for the isometry (in fact we prove that if $P_2^{-1} \circ P_1$ is well defined, then it gives the required isometry). The representation formula $P_2^{-1}\circ P_1$ also gives uniqueness of the boundary fixing isometry directly if it exists. 

In~\cite{LTU} the authors introduce an embedding of real analytic Riemannian manifolds by using Green functions to study the Calder\'on problem. Their approach involves an analytic continuation argument based on the implicit function theorem applied to the embedding. In contrast, the analogous step for Poisson embedding can be done simply by using harmonic coordinates. This feature also emphasizes the role of choosing suitable coordinates in the study of the anisotropic Calder\'on problem. Moreover, the recovery of the metric using Poisson embedding is an elementary linear algebra argument that yields a representation of the metric in terms of harmonic functions (see Proposition~\ref{morphism_to_homothety}). In~\cite{LTU} an asymptotic expansion of Green functions near the diagonal is used.

The basic principle of Poisson embedding is to control the points on a manifold by the values of solutions to Dirichlet problems on the manifold. This principle generalizes to nonlinear equations, to linear systems or to nonlocal operators where Green functions might not be easily accessible. 
The Poisson embedding also generalizes directly to less regular, say piecewise smooth or $C^k$ regular, settings. 


Finally, we mention that ideas related to the Poisson embedding have been used in other fields as well. In~\cite{GW} an embedding by finitely many harmonic functions is used to embed an open Riemannian manifold into a higher dimensional Euclidean space. Their embedding is similar to the Poisson embedding, in the sense that the Poisson embedding parametrizes the manifold by the data of all, instead of a finite number, of harmonic functions on the manifold. Another related result is that when the mapping $P_2^{-1}\circ P_1:M_1\to M_2$ exists, it is necessarily a \emph{harmonic morphism}. The study of harmonic morphisms, which are mappings that preserve harmonic functions, has applications in the study of minimal surfaces and in mathematical physics~\cite{Wood_book}. In Section \ref{sec_recovery_of_metric} we give a new proof of the characterization of harmonic morphisms as homotheties~\cite[Corollary 3.5.2]{Wood_book} based on harmonic coordinates.

\subsection{Outline of the paper}
In Section~\ref{sec_poisson_embed} we introduce the Poisson embedding and its basic properties, in a way that does not directly involve the Calder\'on problem. In Section~\ref{sec_det_of_harmonic_ftions} we determine the harmonic functions from the knowledge of the DN map on real analytic manifolds by using the Poisson embedding. From the knowledge of harmonic functions, we then determine the metric in Section~\ref{sec_recovery_of_metric}, which gives a new proof of the main result of~\cite{LU} in dimension $n\geq 3$. Section \ref{sec_calderon_2d} gives a new proof of the two-dimensional result of~\cite{LU}. In Section~\ref{sec:q_lin} we use the Poisson embedding approach and linearization to prove Theorem \ref{thm_intro_quasilinear}, yielding uniqueness in the inverse problem for quasilinear equations. Appendix~\ref{runge_apprx_sec} is independent of the rest of the paper and contains Runge approximation results. In Appendix~\ref{sec_appendix} we include the proofs of some basic auxiliary results.

\subsection*{Acknowledgements}
M.L., T.L.\ and M.S.\ were supported by the Academy of Finland (Centre of Excellence in Inverse Modelling and Imaging, grant numbers 284715 and 309963). M.S.\ was also partly supported by the European Research Council under FP7/2007-2013 (ERC StG 307023) and Horizon 2020 (ERC CoG 770924).

\section{Poisson embedding}\label{sec_poisson_embed}
We begin by introducing the Poisson embedding of a Riemannian manifold $(M,g)$ with boundary. 
Throughout this section, we assume that $M$ is connected and $M$ and $g$ are $C^\infty$. In particular, we do not require real analyticity in the definition of the Poisson embedding. We will solve Dirichlet problems with boundary values supported on a nonempty open set $\Gamma$ of the boundary $\p M$. The domain of the Poisson embedding will be
\[
 M^\Gamma:= \Mi \cup \Gamma\subset M.
\]

\begin{Definition}[Poisson embedding]\label{Poisson_embed_Def}
 Let $(M,g)$ be a compact Riemannian manifold with boundary, and let $\Gamma$ be a nonempty open subset of $\p M$. The \emph{Poisson embedding} of the manifold $M$ is defined to be the mapping 
 \[
 P:M^\Gamma\to \mathcal{D}'(\Gamma), \quad P(x)f=u_f(x)
 \]
 where $u_f(x)$ solves the Dirichlet problem
 \begin{align*}
 \Delta_g u_f&=0 \text{ in } M, \\
 u_f &=f \text{ on } \p M
 \end{align*}
 with $f\in C^\infty_c(\Gamma)$.
\end{Definition}

In the definition $\mathcal{D}'(\Gamma)$ is the space of distributions on $\Gamma$, i.e.\ $\mathcal{D}'(\Gamma)$ is the dual of $C^{\infty}_c(\Gamma)$. We call $P$ the Poisson embedding due to the connection with the representation formula for the solution $u_f(x)$ in terms of the \emph{Poisson kernel}
\[
\p_{\nu_y} G(x,y), 
\]
where $x\in M \mbox{ and } y\in \p M$,
as
\[
 u_f(x)=\int_{\p M} \p_{\nu_y}G(x,y)f(y) \,dS_{g_{\p M}}.
\]
Here $G(x,y)$ is the Dirichlet Green's function of $(M,g)$ and $dS_{g_{\p M}}$ is the induced metric on the boundary. Thus $P(x)$ can be identified with the distribution $\p_{\nu_y} G(x,\,\cdot\,)$ on $\Gamma$. In fact, if $x \in \Mi$ then $\p_{\nu_y} G(x,\,\cdot\,)$ is in $C^{\infty}(\Gamma)$, and if $x \in \Gamma$ then $\p_{\nu_y} G(x,\,\cdot\,) = \delta_x(\,\cdot\,)$ so $P(x)$ is always a measure on $\Gamma$. 

We have the following basic properties of $P$. 

 \begin{Proposition}\label{P_basics}
 Let $(M,g)$ be a compact manifold with boundary. For any $x \in M^\Gamma$, one has $P(x) \in H^{-s}(\Gamma)$ whenever $s+1/2>n/2$. 
 The mapping $P$ is continuous $M^\Gamma \to H^{-s-1}(\Gamma)$ and $k$ times Fr\'echet differentiable considered as a mapping $M^\Gamma \to H^{-s-1-k}(\Gamma)$. In particular, 
 $P:M^\Gamma \to \mathcal{D}'(\Gamma)$ is $C^\infty$ smooth in the Fr\'echet sense. 
 
 The Fr\'echet derivative of $P$ at $x$ is a linear mapping given by
 \begin{equation}\label{formula_for_the_differential}
 DP_x:T_xM^\Gamma\to \mathcal{D}'(\Gamma), \quad (DP_xV)f=du_f(x)\cdot V,
 \end{equation}
 where $u_f$ solves $\Delta_gu_f=0$ in $M$ and $u_f|_{\p M} = f \in C^{\infty}_c(\Gamma)$.
\end{Proposition}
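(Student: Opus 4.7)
The plan is to combine three classical ingredients: elliptic regularity for the Dirichlet problem, Sobolev embedding on $\Gamma$, and Taylor expansion of $u_f$ in the spatial variable $x$. The key quantitative estimate I would establish first is
\[
\|u_f\|_{C^k(M)} \le C_k\,\|f\|_{H^{s+k}(\Gamma)}, \qquad f\in C^\infty_c(\Gamma),
\]
for every $k\ge 0$, under the assumption $s+\tfrac12 > \tfrac{n}{2}$. Since $f\in C^\infty_c(\Gamma)$ has smooth zero-extension to $\p M$, Schauder estimates up to the boundary give $\|u_f\|_{C^k(M)} \le C\,\|f\|_{C^{k,\alpha}(\overline\Gamma)}$ for some small $\alpha>0$, and the Sobolev embedding $H^{s+k}(\Gamma) \hookrightarrow C^{k,\alpha}(\overline\Gamma)$ (available because $s>(n-1)/2=(\dim\Gamma)/2$) closes the estimate.

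Taking $k=0$ and evaluating at $x\in M^\Gamma$ yields $|P(x)f|=|u_f(x)|\le C\,\|f\|_{H^s(\Gamma)}$; by density of $C^\infty_c(\Gamma)$ in $H^s(\Gamma)$, $P(x)$ extends to a bounded linear functional, hence $P(x)\in H^{-s}(\Gamma)$. For continuity of $P:M^\Gamma\to H^{-s-1}(\Gamma)$, I would apply the mean value inequality in a local chart to obtain $|u_f(x)-u_f(y)|\le d(x,y)\,\|u_f\|_{C^1(M)} \le C\, d(x,y)\,\|f\|_{H^{s+1}(\Gamma)}$, whence $\|P(x)-P(y)\|_{H^{-s-1}(\Gamma)}\le C\,d(x,y)$, i.e.\ Lipschitz continuity.

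For $k$-fold Fr\'echet differentiability, the candidate $k$-th derivative is
\[
D^kP_x(V_1,\ldots,V_k)f := (\nabla^k u_f)(x)(V_1,\ldots,V_k),
\]
and Taylor's theorem combined with the base estimate at order $k+1$ gives
\[
\Bigl| u_f(x+V) - \sum_{j=0}^{k}\tfrac{1}{j!}(\nabla^j u_f)(x)(V,\ldots,V) \Bigr| \le C\,\|f\|_{H^{s+k+1}(\Gamma)}\,|V|^{k+1},
\]
verifying $k$-fold Fr\'echet differentiability as a map into $H^{-s-1-k}(\Gamma)$; continuity of $x\mapsto D^kP_x$ follows from the same mean value argument at one order higher. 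As this holds for every $k$ and each $H^{-s-1-k}(\Gamma)$ embeds continuously into $\mathcal{D}'(\Gamma)$, $P:M^\Gamma\to \mathcal{D}'(\Gamma)$ is Fr\'echet $C^\infty$. Specializing $k=1$ recovers the stated formula \eqref{formula_for_the_differential}, since $DP_xV$ applied to $f$ is exactly the directional derivative $du_f(x)\cdot V$.

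The main technical point to watch is uniformity of the estimates near boundary points $x\in \Gamma\subset \p M$, where $P(x)=\delta_x$ is a Dirac measure rather than a smooth density. However, since $f\in C^\infty_c(\Gamma)$ vanishes in a neighborhood of $\p\Gamma$, Schauder estimates up to the boundary in fact deliver $u_f\in C^\infty$ on any compact subset of $\overline{M^\Gamma}$, and all constants in the chain of estimates can be chosen locally uniformly on $M^\Gamma$. With that observation the three preceding paragraphs yield the proposition.
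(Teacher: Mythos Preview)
Your argument is correct and follows essentially the same strategy as the paper's proof: obtain a uniform estimate $\|u_f\|_{C^k(M)}\le C\|f\|_{H^{s+k}}$, then use Taylor expansion of $u_f$ in the spatial variable to read off continuity, the Fr\'echet derivative, and higher differentiability. The only difference is in how you reach the key estimate---you go via Schauder estimates followed by Sobolev embedding on the $(n{-}1)$-dimensional boundary $\Gamma$, whereas the paper uses $H^s$ elliptic regularity $\|u_f\|_{H^{s+1/2}(M)}\le C\|f\|_{H^s(\partial M)}$ followed by Sobolev embedding on the $n$-dimensional manifold $M$; both routes yield the same bound under the hypothesis $s+\tfrac12>\tfrac n2$.
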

In the proposition $\cdot$ refers to the canonical pairing of vectors and covectors on $M^\Gamma$, that is, $du_f(x)\cdot V\equiv [du_f(x)](V)=\p_au_f(x)V^a$.

We have placed the proof of the proposition, which just applies standard estimates for solutions of elliptic equations, in Appendix \ref{sec_appendix}. However, let us formally calculate where the formula for the derivative of $P$ comes from. Let $x\in M^\Gamma$ and $V\in T_xM^\Gamma$. By definition $V$ is given by a path $\gamma:[0,1]\to M^\Gamma$ such that $\gamma(0)=x$ and $\frac{d}{dt}\big|_{t=0}\gamma(t)=V$. Let $f\in C_c^\infty(\Gamma)$. A formal calculation of $DP_xV\in \mathcal{D}'(\Gamma)$ now gives
\[
[DP_xV]f=\left[\frac{d}{dt}\Big|_{t=0}P\circ \gamma(t)\right]f=\frac{d}{dt}\Big|_{t=0}u_f(\gamma(t))=\p_au_f(x)V^a=du_f(x)\cdot V.
\]

Let us next show that the mapping $P$ is indeed an embedding, which means a $C^{\infty}$ injective mapping with injective Fr\'echet derivative. 
The main tool that we encounter here for the first time is Runge approximation, which allows one to approximate locally defined harmonic functions by global harmonic functions. It is known since \cite{Lax, Malgrange} that approximation results of this type follow by duality from the unique continuation principle. We have devoted Appendix~\ref{runge_apprx_sec} to various Runge approximation results. We will mostly use the following consequence, whose proof may also be found in Appendix~\ref{runge_apprx_sec}.

\begin{Proposition} \label{prop_runge_consequence_first}
Let $(M,g)$ be a compact Riemannian manifold with smooth boundary, and let $\Gamma$ be a nonempty open subset of $\partial M$.
\begin{enumerate}
\item[(a)] 
If $x \in M^{\Gamma}$, $y \in M$ and $x \neq y$, there is $f \in C^{\infty}_c(\Gamma)$ such that 
\[
u_f(x) \neq u_f(y).
\]
\item[(b)] 
If $x \in M^{\Gamma}$ and $v \in T_x^* M$, there is $f \in C^{\infty}_c(\Gamma)$ such that 
\[
du_f(x) = v.
\]
\end{enumerate}
\end{Proposition}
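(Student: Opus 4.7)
Both parts reduce to the Runge approximation property for harmonic functions on $(M,g)$ with boundary data supported in $\Gamma$: any harmonic function on a sufficiently nice open $\Omega \subset M^\Gamma$ can be approximated in $C^k$-norm on compact subsets of $\Omega$ by global solutions $u_f$ with $f \in C^\infty_c(\Gamma)$. This is the content of the approximation results proved in Appendix~\ref{runge_apprx_sec}, and both (a) and (b) follow by combining these with short local constructions of harmonic functions with the desired behavior.

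For (a), the plan is to split on the location of $y$. If $y \in \partial M \setminus \Gamma$, then $u_f(y) = f(y) = 0$ for every $f \in C^\infty_c(\Gamma)$, so it suffices to find $f$ with $u_f(x) \neq 0$: if $x \in \Gamma$ I pick $f$ with $f(x) \neq 0$; if $x \in \Mi$ I take any nonnegative nontrivial $f \in C^\infty_c(\Gamma)$ and invoke the strong maximum principle to conclude $u_f(x) > 0$. If instead $y \in M^\Gamma$, I choose disjoint open neighborhoods $U_x, U_y \subset M^\Gamma$ of $x$ and $y$ and define a locally harmonic function $h$ on the disjoint union $U_x \sqcup U_y$ by $h \equiv 1$ on $U_x$ and $h \equiv 0$ on $U_y$. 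Runge approximation then furnishes $f \in C^\infty_c(\Gamma)$ with $|u_f(x) - 1|$ and $|u_f(y)|$ both less than $\tfrac14$, whence $u_f(x) \neq u_f(y)$.

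For (b), I view $f \mapsto du_f(x)$ as a linear map $T : C^\infty_c(\Gamma) \to T_x^* M$ into a finite-dimensional vector space; by linearity, surjectivity is equivalent to the image spanning $T_x^* M$. Using harmonic coordinates on a small open (half-)neighborhood $U$ of $x$ in $M^\Gamma$, I obtain locally harmonic functions $h_1, \ldots, h_n$ on $U$ with $\{dh_j(x)\}_{j=1}^n$ a basis of $T_x^* M$. A $C^1$-version of Runge approximation on a compact neighborhood of $x$ produces $f_j \in C^\infty_c(\Gamma)$ with $du_{f_j}(x)$ as close to $dh_j(x)$ as desired; once the errors are small enough, the vectors $du_{f_j}(x)$ still form a basis of $T_x^* M$, so $T$ is surjective and any prescribed $v$ equals $du_f(x)$ for an appropriate linear combination $f = \sum_j c_j f_j$.

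The main obstacle is the Runge approximation itself, especially the $C^1$ variant needed in (b) and the fact that the compact approximation sets may meet the boundary piece $\Gamma$. For interior configurations this reduces to the classical Lax--Malgrange duality combined with unique continuation and interior elliptic regularity; allowing compacta that touch $\Gamma$, and upgrading the convergence to $C^1$, is precisely the refinement established in Appendix~\ref{runge_apprx_sec} on which the present proposition rests.
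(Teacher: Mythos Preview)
Your plan follows the paper's strategy closely and is essentially correct, but one claim you rely on is inaccurate. The Runge result actually proved in Appendix~\ref{runge_apprx_sec} (Proposition~\ref{prop_runge_approximation_cauchy}) requires $U \subset\subset M^{\mathrm{int}}$; it does \emph{not} cover compacta that meet $\Gamma$, contrary to what you assert in your final paragraph. Consequently your argument for~(a) in the case $y \in M^\Gamma$ breaks down whenever $x \in \Gamma$ or $y \in \Gamma$, and your argument for~(b) breaks down when $x \in \Gamma$.

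The paper does not strengthen Runge to cover this; it sidesteps the issue. For~(a) it splits into four cases according to whether each of $x,y$ lies in $\Mi$ or on $\partial M$, and handles every boundary configuration by a direct choice of $f$: if $x,y \in \partial M$ one simply takes $f$ with $f(x)\neq f(y)$; if $y \in \partial M$ and $x \in \Mi$, one picks $f$ supported in some $\Gamma' \subset\subset \Gamma \setminus \{y\}$ so that $u_f(y)=f(y)=0$ automatically, and uses interior Runge only to force $u_f(x)\neq 0$. For~(b) the paper invokes Proposition~\ref{runge_prescribed_jet}, whose proof deals with $x \in \Gamma$ by first extending $M$ slightly across $\Gamma$ so that $x$ becomes an interior point, after which the interior Runge result applies. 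Your use of the strong maximum principle for the subcase $x \in \Mi$, $y \in \partial M \setminus \Gamma$ is a legitimate shortcut not in the paper; but you still need one of the above devices for the remaining boundary configurations.
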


 
\begin{Proposition}[$P$ is an embedding]\label{P_is_embedding}
Let $(M,g)$ be a compact manifold with boundary. The mapping $P:M^\Gamma\to \mathcal{D}'(\Gamma)$ is a $C^\infty$ embedding in the sense that it is injective with injective Fr\'echet derivative on $TM^\Gamma$.
\end{Proposition}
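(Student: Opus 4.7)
The proof will be a direct corollary of the formula for the differential established in Proposition~\ref{P_basics} together with the Runge-type consequences recorded in Proposition~\ref{prop_runge_consequence_first}. The smoothness of $P:M^\Gamma\to\mathcal{D}'(\Gamma)$ is already contained in Proposition~\ref{P_basics}, so the only tasks are to verify injectivity of $P$ and of $DP_x$ for each $x\in M^\Gamma$.

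First I would prove injectivity of $P$. Suppose $x_1,x_2\in M^\Gamma$ satisfy $P(x_1)=P(x_2)$, i.e.\ $u_f(x_1)=u_f(x_2)$ for every $f\in C^\infty_c(\Gamma)$. If $x_1\neq x_2$, then since $x_1\in M^\Gamma$ and $x_2\in M$ Proposition~\ref{prop_runge_consequence_first}(a) provides some $f\in C^\infty_c(\Gamma)$ with $u_f(x_1)\neq u_f(x_2)$, contradicting the assumption. Hence $x_1=x_2$, proving injectivity.

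Next I would handle the injectivity of the Fr\'echet derivative. Fix $x\in M^\Gamma$ and let $V\in T_xM^\Gamma$ satisfy $DP_xV=0$. By the formula \eqref{formula_for_the_differential} this means
\[
du_f(x)\cdot V=0\qquad\text{for every }f\in C^\infty_c(\Gamma).
\]
By Proposition~\ref{prop_runge_consequence_first}(b), every covector $v\in T_x^*M$ is realized as $du_f(x)$ for some $f\in C^\infty_c(\Gamma)$. Consequently $v(V)=0$ for every $v\in T_x^*M$, which forces $V=0$. This proves that $DP_x$ is injective on $T_xM^\Gamma$.

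There is no real obstacle once Proposition~\ref{prop_runge_consequence_first} is in hand; the entire content of the embedding property is packaged into the Runge-type separation of points and prescription of first-order jets for harmonic functions, together with the identification of $DP_xV$ with the pairing $V\mapsto du_f(x)\cdot V$. In other words, all of the analytic work has been pushed into the Runge approximation results of Appendix~\ref{runge_apprx_sec}, and the present proposition is essentially their immediate geometric translation.
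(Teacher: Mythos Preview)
Your proof is correct and follows essentially the same approach as the paper: both parts reduce immediately to Proposition~\ref{prop_runge_consequence_first}, with injectivity of $P$ coming from part~(a) and injectivity of $DP_x$ from part~(b). The only cosmetic difference is that the paper, for the differential, selects the single $f$ with $\nabla u_f(x)=V$ and computes $|V|_g^2=0$, whereas you invoke surjectivity of $f\mapsto du_f(x)$ onto all of $T_x^*M$; these are equivalent uses of the same Runge consequence.
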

\begin{proof} 
Let $x_1,x_2\in M^\Gamma$ and assume that $P(x_1)=P(x_2)$. That is, for all boundary value functions $f\in C^\infty_c(\Gamma)$, we have
 \[
 u_f(x_1)=P_1(x_1)f=P_2(x_2)f=u_f(x_2).
 \]
We need to show that $x_1=x_2$. We argue by contradiction and suppose that $x_1\neq x_2$. But by Proposition \ref{prop_runge_consequence_first} there is $f_0 \in C^{\infty}_c(\Gamma)$ such that 
\[
 u_{f_0}(x_1)\neq u_{f_0}(x_2).
\]
This is a contradiction and we must have $x_1=x_2$. Thus $P$ is injective.
 
To show that the differential of $P$ is injective, let $x\in M^\Gamma$ and $V\in T_xM^\Gamma$, and assume that $DP_xV=0$. By the formula~\eqref{formula_for_the_differential} for the differential $DP_x$, we have
 \begin{equation}\label{Dinj_eq}
 [DP_xV]f=du_f(x)\cdot V=0,
 \end{equation}
 for all boundary value functions $f\in C_c^\infty(\Gamma)$. To conclude that $V=0$, we use Proposition \ref{prop_runge_consequence_first} again and choose $f \in C^{\infty}_c(\Gamma)$ such that $\nabla u_f(x) = V$. Here $\nabla u$ is the Riemannian gradient. 
 Thus the condition~\eqref{Dinj_eq} yields 
 \[
 0=du_{f}(x)\cdot V = \abs{V}_{g(x)}^2
\]
 showing that $V=0$. Thus the differential of $P$ is injective on $TM^\Gamma$.
\end{proof}

\subsection{Composition of Poisson embeddings}
Let $(M_1,g_1)$ and $(M_2, g_2)$ be compact manifolds with mutual boundary $\p M$. One of the aims of this paper is to give a candidate for the isometry that one hopes to construct in the anisotropic Calder\'on problem. 
This candidate is 
\[
J:=P_2^{-1}\circ P_1.
\]
We will see that, whenever this mapping is well defined from $M_1$ to $M_2$, it is exactly the mapping that one seeks in the Calder\'on problem. In dimensions $n\geq 3$, it is an isometry. In dimension $2$, it is a conformal mapping. It also fixes the boundary if we consider full data problem, or the part $\Gamma\subset \p M$, if we consider partial data problem with measurements made on $\Gamma$.

We now begin to study the composition $P_2^{-1}\circ P_1$. We include considerations related to partial data problem on the part $\Gamma$ of the boundary. For this purpose we denote throughout this section
\[
 M_1^\Gamma= \Mi_1 \cup \Gamma \text{ and }M_2^\Gamma= \Mi_2 \cup \Gamma.
\]
Since we already know that the Poisson embeddings $P_j$, $j=1,2$, are injective, we know that the mapping is well defined and bijective if the image sets of $P_j:(M_j^\Gamma,g_j)\to \mathcal{D}'(\Gamma)$ agree:
\[
P_1(M_1^\Gamma)=P_2(M_2^\Gamma).
\]
Thus solving the Calder\'on problem would reduce to verifying this condition from the knowledge that the DN maps of $(M_1,g_1)$ and $(M_2,g_2)$ agree. 

The next lemma considers the smoothness properties of the mapping $J$ assuming it is defined on some open set of $M_1$. 
\begin{Lemma}\label{J_is_local_dif}
 Let $(M_{j},g_j)$, $j=1,2$, be compact manifolds with mutual boundary $\p M$. Assume that 
 \[
 P_1(B) \subset P_2(M_2^\Gamma)
 \]
for some open set $B\subset M_1^\Gamma$. Then $J(B) \subset M_2^{\Gamma}$, and $J = P_2^{-1} \circ P_1$ is a $C^\infty$ diffeomorphism $B\to J(B)$.
\end{Lemma}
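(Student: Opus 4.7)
\emph{Plan.} I would split the argument into three parts: setting up $J$ set-theoretically, establishing continuity of $J$, and then upgrading $J$ to a smooth diffeomorphism via harmonic coordinates produced by Proposition~\ref{prop_runge_consequence_first}. The main obstacle is continuity, since $P_2$ is not defined on $\p M_2 \setminus \Gamma$ and one must rule out the possibility that $J(y_k)$ escapes to that part of the boundary.

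The set-theoretic part is immediate. Since $P_2$ is injective by Proposition~\ref{P_is_embedding}, its inverse is well-defined on $P_2(M_2^\Gamma)$, so the hypothesis $P_1(B) \subset P_2(M_2^\Gamma)$ forces $J(B) \subset M_2^\Gamma$, and $J$ inherits injectivity from $P_1$ via the identity $P_1 = P_2 \circ J$. Note also that $\dim M_1 = \dim M_2 =: n$, since the two manifolds share a boundary.

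For continuity of $J$ at a point $y \in B$, I would take $y_k \to y$ in $B$ and, by compactness of $M_2$, extract a subsequence along which $J(y_k) \to z^\ast \in M_2$; the goal is to show $z^\ast = J(y)$. If $z^\ast \in M_2^\Gamma$, this follows from continuity of $P_1$ and $P_2$ (Proposition~\ref{P_basics}) together with injectivity of $P_2$, via the chain $P_2(z^\ast) = \lim P_2(J(y_k)) = \lim P_1(y_k) = P_2(J(y))$. The remaining case $z^\ast \in \p M_2 \setminus \Gamma$ is excluded: boundary regularity of harmonic extensions gives $u^2_f(J(y_k)) \to f(z^\ast) = 0$ for every $f \in C_c^\infty(\Gamma)$, hence $P_1(y) \equiv 0$ as a distribution on $\Gamma$, contradicting $P_1(y) \neq 0$. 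This last nonvanishing is clear when $y \in \Gamma$ (where $P_1(y) = \delta_y$), and when $y \in M_1^{\mathrm{int}}$ it follows from the strong maximum principle applied to a nonnegative, nonzero $f \in C_c^\infty(\Gamma)$.

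With continuity in hand, local smoothness and the diffeomorphism property fall out of harmonic coordinates. Given $x_1 \in B$ with $x_2 = J(x_1)$, Proposition~\ref{prop_runge_consequence_first}(b) supplies $h_1, \ldots, h_n \in C_c^\infty(\Gamma)$ with $du^2_{h_j}(x_2)$ a basis of $T^\ast_{x_2} M_2$, so that $\Psi_2 := (u^2_{h_1}, \ldots, u^2_{h_n})$ is a diffeomorphism from some neighborhood $U_2 \subset M_2^\Gamma$ of $x_2$ onto an open set in $\R^n$. Continuity of $J$ confines $J(y)$ to $U_2$ for $y$ in a small neighborhood $B' \subset B$ of $x_1$, and the identity $u^2_{h_j}(J(y)) = u^1_{h_j}(y)$ coming from $P_1 = P_2 \circ J$ then forces $J = \Psi_2^{-1} \circ (u^1_{h_1}, \ldots, u^1_{h_n})$ on $B'$, a composition of smooth maps. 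Differentiating $P_1 = P_2 \circ J$ yields $DP_1|_y = DP_2|_{J(y)} \circ dJ_y$; injectivity of $DP_1|_y$ (Proposition~\ref{P_is_embedding}) together with $\dim M_1 = \dim M_2$ makes $dJ_y$ a linear isomorphism at each $y \in B$, so the global bijection $J : B \to J(B)$ is promoted by the inverse function theorem to the desired $C^\infty$ diffeomorphism.
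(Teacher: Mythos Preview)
Your proof is correct and follows essentially the same approach as the paper: continuity via compactness of $M_2$ and separation by harmonic functions, smoothness via the harmonic-coordinate representation $J = \Psi_2^{-1}\circ(u^1_{h_1},\ldots,u^1_{h_n})$, and invertibility of $dJ$ from injectivity of $DP_1$. The only cosmetic differences are that you observe $J(B)\subset M_2^\Gamma$ is immediate from the definition (the paper argues it by contradiction), and in the continuity step you split into the cases $z^\ast\in M_2^\Gamma$ versus $z^\ast\in\partial M\setminus\Gamma$ and invoke the maximum principle, whereas the paper handles both at once by directly applying Proposition~\ref{prop_runge_consequence_first}(a) to separate $J(y)\in M_2^\Gamma$ from the limit point $z^\ast\in M_2$.
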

\begin{proof}
We first note that, writing $u_f^j(x) = P_j(x) f$, for any $f \in C_c^{\infty}(\Gamma)$ one has 
\begin{equation} \label{p1_p2_relation}
u_f^1(x) = u_f^2(J(x)), \qquad x \in B.
\end{equation}
This follows from the computation $P_1(x) =P_2(P_2^{-1}\circ P_1(x))=P_2\circ J(x)$. Now, to show that $J(B) \subset M_2^{\Gamma}$, we argue by contradiction and assume that there is some $x \in B \subset M_1^{\Gamma}$ with $J(x) \in \partial M \setminus \Gamma$. But then by \eqref{p1_p2_relation} 
\[
u_f^1(x) = u_f^2(J(x)) = 0, \qquad f \in C^{\infty}_c(\Gamma),
\]
which is impossible by Proposition \ref{prop_runge_consequence_first}. Thus $J(B) \subset M_2^{\Gamma}$.

We next prove that $J: B \to M_2$ is continuous (this perhaps surprisingly uses compactness of $M_2$). Let $x\in B$. If $J$ would not be continuous at $x$, there would be $\eps>0$ and a sequence $(x_l) \subset M_1$ with $x_l \to x$ such that $J(x_{l})\notin B(J(x),\eps)$. By the compactness of $M_2$, passing to a subsequence (still denoted by $(x_l)$), we may assume that $J(x_{l})$ converges to $y\in M_2$. We have $d(y,J(x))\geq \eps$.
 
Now, using Proposition \ref{prop_runge_consequence_first} in $M_2$ and the fact that $J(x) \in M_2^{\Gamma}$, we can find $f \in C_c^{\infty}(\Gamma)$ such that the harmonic function $u_f^2 \in C^{\infty}(M_2)$ satisfies $u_f^2(J(x))  \neq u_f^2(y)$. The formula \eqref{p1_p2_relation} shows that 
\[
 u_f^1(x_{l})-u_f^1(x)=u_f^2(J(x_{l}))-u_f^2(J(x)).
\]
 Since harmonic functions are continuous, taking the limit $l \to \infty$ yields
 \[
 0 = u_f^2(y) - u_f^2(J(x)),
\]
 which is a contradiction. Thus $J$ is continuous. 

We will next show that $J: B \to M_2$ is $C^{\infty}$. This follows an idea from \cite{Taylor_isometry} related to smoothness of Riemannian isometries. Fix $x \in B$, and choose 
harmonic coordinates $U=(u_{f_1}^2,\ldots, u_{f_n}^2)$ with $f_j\in C_c^\infty(\Gamma)$, $j=1,\ldots,n$, near $J(x)$. This can be done by Proposition \ref{prop_runge_consequence_first} upon choosing $\{ d u_{f_1}^2(J(x)), \ldots, du_{f_n}^2(J(x)) \}$ linearly independent. Write $V = (u_{f_1}^1,\ldots, u_{f_n}^2)$. By the formula \eqref{p1_p2_relation} we have $V = U \circ J$ in $B$.  Now $U$ is bijective in some neighborhood $\Omega \subset M_2$ of $J(x)$, and since $J$ is continuous there is a neighborhood $W$ of $x$ with $J(W) \subset \Omega$. Thus we have 
 \begin{equation}\label{representation_for_J}
J = U^{-1}\circ V \text{ in $W$.}
 \end{equation}
Since the harmonic functions $u_{f_k}^j$, $j=1,2$, are smooth, the smoothness of $J$ near $x$ follows. 
 

It remains to show that the differential of $J$ is invertible on $B$. Since $J: B \to M_2$ is injective, the claim will then follow from the inverse function theorem. Let $f\in C_c^\infty(\Gamma)$, $x\in B$ and $X\in T_xM_1$. By \eqref{p1_p2_relation}, we have $u_f^1=u_f^2\circ J$. Together with \eqref{formula_for_the_differential} this gives: 
 \begin{align*}
 [DP_1(x)X]f&
 =X\cdot du_f^1(x)=X\cdot d(u_f^2\circ J)(x)=X\cdot (du_f^2|_{J(x)} (DJ)^T)   \\
 &= du_f^2|_{J(x)}\cdot (DJ(x))X.
 \end{align*}
 The left hand side is equal to $d u_f^1\cdot X$. Thus for any $f \in C^{\infty}_c(\Gamma)$ we have the equation
 \begin{equation}\label{equation_of_injectivity}
 d u_f^1|_x \cdot X=d u_f^2|_{J(x)} \cdot (DJ(x))X.
 \end{equation}
 This equation will be used again later on and we name it ``the equation of injectivity''. (This equation can be interpreted as the infinite dimensional counterpart of the chain rule for the composition $P_2\circ (P_2^{-1}\circ P_1)$.)
 
 Assume that $(DJ(x))X=0$ and choose by Proposition \ref{prop_runge_consequence_first} a harmonic function $u_f^1$ so that the Riemannian gradient satisfies $\nabla u_f^1(x)= X$. Then 
 \[
 |X|^2=d u_f^2\cdot (DJ(x)) X=0.
 \]
 Thus $X=0$. It follows that $DJ(x)$ is injective at $x$ and since the manifolds are of the same dimension, it is invertible. This proves the claim.
\end{proof}

\section{Determination of harmonic functions}\label{sec_det_of_harmonic_ftions}

We have so far acquired the basic properties of the Poisson embedding. We now move on to give a new proof of the fact~\cite{LassasUhlmann} that for real analytic Riemannian manifolds with $\dim(M) \geq 3$, the knowledge of the DN-map determines the Riemannian manifold up to isometry. Throughout this section we will assume that the manifolds $M_j$ and metrics $g_j$, $j=1,2$, are real analytic, and $n = \dim(M_j) \geq 3$. We continue to denote $M_j^\Gamma= \Mi_j \cup \Gamma$.

We first show that near any boundary point there exist coordinates in which the coordinate representations of harmonic functions, corresponding to a common boundary value $f$, agree. This follows from boundary determination~\cite{LU} and unique continuation. 

\begin{Lemma}[Determination near the boundary] \label{lemma_determination_near_the_boundary}
 Let $(M_1,g_1)$ and $(M_2,g_2)$ be compact real analytic manifolds with mutual boundary whose DN maps agree on an open set $\Gamma \subset \p M$. Assume also that $\Gamma$ is real analytic. Then, for any $p\in \Gamma$ there are boundary normal coordinates $\psi_j$, $j=1,2$, defined on neighborhoods $U_j\subset M_j^\Gamma$ of $p$, such that $\psi_1$ and $\psi_2$ agree on $\Gamma$ and such that for any boundary function $f\in C_c^\infty(\Gamma)$ we have
 \[
 u_f^1\circ \psi_1^{-1}(x)=u_f^2\circ \psi_2^{-1}(x), \quad x\in \psi_1(U_1)\cap \psi_2(U_2)\subset \{x^n\geq 0\}.
 \]
 Here the functions $u_f^j\circ \psi_j^{-1}$ are the coordinate representations of the harmonic functions $u_f^j$ on $(M_j,g_j)$ with boundary value $f$.
\end{Lemma}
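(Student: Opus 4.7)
The plan is to combine the boundary-determination result of \cite{LU} with the real analyticity of the metrics and unique continuation. Fix $p \in \Gamma$ and a real analytic chart $\chi : V \to \R^{n-1}$ on $\Gamma$ near $p$. For each $j = 1,2$, use $\chi$ together with geodesics normal to $\Gamma$ with respect to $g_j$ to construct boundary normal coordinates $\psi_j : U_j \to \psi_j(U_j) \subset \{x^n \geq 0\}$. By construction, $\psi_j(q) = (\chi(q),0)$ for $q \in \Gamma \cap U_j$, so in particular $\psi_1 = \psi_2$ on $\Gamma$ as required. In these coordinates,
\[
\psi_j^* g_j \;=\; \sum_{\alpha,\beta < n} (g_j)_{\alpha\beta}(x',x^n)\,dx^\alpha dx^\beta + (dx^n)^2.
\]

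The first key step is boundary determination: the DN map $\Lambda_{g_j}$ restricted to $\Gamma$ determines the full Taylor series of the tangential block $(g_j)_{\alpha\beta}(x',0)$ at every $x' \in \chi(V)$, by the argument of \cite{LU}. Because the metrics and the maps $\psi_j$ are real analytic, these Taylor series pin down $\psi_j^* g_j$ on a neighborhood of $(\chi(p),0)$ in the half space $\{x^n \geq 0\}$. Since $\Lambda_{g_1}|_\Gamma = \Lambda_{g_2}|_\Gamma$, we conclude $\psi_1^* g_1 = \psi_2^* g_2 =: \tilde g$ on some common neighborhood $\Omega$ of $(\chi(p),0)$. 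After shrinking the $U_j$, we may assume $\psi_1(U_1)\cap \psi_2(U_2) \subset \Omega$.

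Finally, compare $v_j := u_f^j \circ \psi_j^{-1}$. Both are real analytic solutions of $\Delta_{\tilde g} v = 0$, since $\tilde g$ has real analytic coefficients and solutions of analytic elliptic equations are analytic. On the common boundary piece $\{x^n = 0\} \cap \psi_1(U_1) \cap \psi_2(U_2)$, both $v_j$ equal $f \circ \chi^{-1}$, and the equality of DN maps (together with the identity $\p_\nu = -\p_{x^n}$ valid in boundary normal coordinates) gives $\p_{x^n} v_1 = \p_{x^n} v_2$ there as well. Using $\Delta_{\tilde g} v_j = 0$ to express $\p_{x^n}^2 v_j$ in terms of tangential derivatives and a single $\p_{x^n}$-derivative, a routine induction on the order of $x^n$-differentiation shows that $v_1$ and $v_2$ have identical Taylor series at every point of $\{x^n = 0\} \cap \psi_1(U_1) \cap \psi_2(U_2)$. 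Real analyticity then forces $v_1 = v_2$ on this common domain, which is the claim.

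The main obstacle I anticipate is the boundary-determination step: one must verify that the Taylor jets of the metric at a boundary point are recovered from partial DN data on any neighborhood of that point. This is exactly the content of the standard argument of \cite{LU} using highly oscillatory boundary data, which we are simply invoking. Everything after that reduces to analytic unique continuation and is essentially mechanical.
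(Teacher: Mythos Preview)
Your argument follows essentially the same route as the paper: boundary normal coordinates, boundary determination from \cite{LU}, real analyticity of the metrics to identify their coordinate representations locally, and then a uniqueness step for the harmonic functions with common Cauchy data. The paper's proof is terser but structurally identical through the point where $\psi_1^{-1*}g_1=\psi_2^{-1*}g_2$.

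There is one genuine issue in your last step. You assert that the $v_j$ are real analytic and then conclude $v_1=v_2$ from the equality of their Taylor jets at boundary points. But $v_j$ is real analytic only in the open half-space $\{x^n>0\}$: since $f\in C_c^\infty(\Gamma)$ is merely smooth, the solutions $u_f^j$ are $C^\infty$ up to the boundary but in general \emph{not} real analytic there. Matching formal Taylor series at points of $\{x^n=0\}$ therefore does not by itself force equality (a function can be analytic in $\{x^n>0\}$, smooth up to $\{x^n=0\}$, and flat there without vanishing). What is actually needed is unique continuation from Cauchy data: $w:=v_1-v_2$ solves $\Delta_{\tilde g}w=0$ with $w|_{x^n=0}=\partial_{x^n}w|_{x^n=0}=0$, and then either elliptic unique continuation (this is exactly what the paper invokes, citing \cite[Theorem~3.3.1]{Isakov_book}) or Holmgren's theorem (which uses real analyticity of the \emph{coefficients} $\tilde g$, not of the solutions) gives $w\equiv 0$. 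Your inductive jet computation is correct and is in fact the opening move of a Holmgren-type proof, but the sentence ``real analyticity then forces $v_1=v_2$'' misidentifies the mechanism; replace it with a citation to unique continuation and the proof is complete.
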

\begin{proof}
 Let $p\in \Gamma\subset \p M$ and let $\psi_j$, $j=1,2$, be boundary normal coordinates near $p$ on manifolds $(M_j,g_j)$, respectively, so that $\psi_1|_\Gamma=\psi_2|_\Gamma$. Then by the boundary determination result in~\cite{LU}, we have that in these coordinates, the jets of the Riemannian metrics $g_j$ agree. Since $g_j$ and $\Gamma$ are real analytic, it follows that $\psi_j$ are real analytic coordinate charts, and thus the coordinate representations $\psi_j^{-1*}g_j$ of $g_j$ agree near $x=\psi_1(p)=\psi_2(p)\in \{ x_n = 0 \}$. 
 
 Write $g=\psi_1^{-1*}g_1=\psi_2^{-1*}g_2$. If $f$ is any $C^\infty_c(\Gamma)$ boundary function, we have that $\tilde{u}_f^1=u_f^1\circ \psi_1^{-1}(x)$ and $\tilde{u}_f^2=u_f^2\circ \psi_2^{-1}(x)$ satisfy the same elliptic equation
 \[
 \Delta_g\tilde{u}_f^i=0 \text{ in $\psi_1(U_1)\cap \psi_2(U_2)$}
 \]
 with the same Cauchy data (since the DN-maps agree)
 \[
 \tilde{u}_f^1=\tilde{u}_f^2, \ \ \p_{x_n} \tilde{u}_f^1=\p_{x_n} \tilde{u}_f^2 \mbox { on } \psi_1(\Gamma)\cap \psi_2(\Gamma).
 \]
 Thus by elliptic unique continuation \cite[Theorem 3.3.1]{Isakov_book}, we have 
 \[
 \tilde{u}_f^1=\tilde{u}_f^2 \mbox{ on } \psi_1(U_1)\cap \psi_2(U_2)
 \]
 as claimed.
 \end{proof}

\begin{Lemma} \label{local_determination_on_B}
Under the same assumptions as in Lemma~\ref{lemma_determination_near_the_boundary}, there exists an open set $B \subset M_1^\Gamma$, which contains all points of $\Gamma$, and a $C^{\infty}$ diffeomorphism $F: B \to F(B) \subset M_2^\Gamma$ such that 
 \begin{gather*}
  P_1(B) \subset P_2(M_2^\Gamma), \qquad  P_1 = P_2 \circ F \text{ on $B$}.
\end{gather*}
\end{Lemma}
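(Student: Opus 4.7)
I would construct $F$ by gluing together local candidates $F^p$, one for each $p\in\Gamma$, read off directly from the boundary normal coordinate charts supplied by Lemma~\ref{lemma_determination_near_the_boundary}, and then use injectivity of $P_2$ to make the gluing automatic.

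For each $p\in\Gamma$, Lemma~\ref{lemma_determination_near_the_boundary} supplies boundary normal coordinates $\psi_j^p:U_j^p\to\mathbb{R}^n$, $j=1,2$, with $\psi_1^p=\psi_2^p$ on $\Gamma$ and $u_f^1\circ(\psi_1^p)^{-1}=u_f^2\circ(\psi_2^p)^{-1}$ on the common image. On the open neighborhood
\[
\widetilde{U}^p:=(\psi_1^p)^{-1}\bigl(\psi_1^p(U_1^p)\cap\psi_2^p(U_2^p)\bigr)\subset U_1^p
\]
of $p$ in $M_1^\Gamma$, set $F^p:=(\psi_2^p)^{-1}\circ\psi_1^p$. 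This is a smooth diffeomorphism onto a neighborhood of $p$ in $M_2^\Gamma$ that restricts to the identity on $\Gamma\cap\widetilde{U}^p$, and for every $f\in C_c^\infty(\Gamma)$ and $x\in\widetilde{U}^p$ the coordinate identification of harmonic functions from Lemma~\ref{lemma_determination_near_the_boundary} gives
\[
P_1(x)f=u_f^1\circ(\psi_1^p)^{-1}(\psi_1^p(x))=u_f^2\circ(\psi_2^p)^{-1}(\psi_1^p(x))=u_f^2(F^p(x))=P_2(F^p(x))f,
\]
so $P_1=P_2\circ F^p$ on $\widetilde{U}^p$.

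Compatibility of the local pieces is then essentially free. If $\widetilde{U}^p\cap\widetilde{U}^q\neq\emptyset$, then for any $x$ in the overlap both $F^p(x)$ and $F^q(x)$ lie in $P_2^{-1}(P_1(x))$, which is a singleton by injectivity of $P_2$ (Proposition~\ref{P_is_embedding}). Hence the $F^p$'s agree on overlaps and patch into a well-defined smooth map $F:B\to M_2^\Gamma$, where $B:=\bigcup_{p\in\Gamma}\widetilde{U}^p$ is an open neighborhood of $\Gamma$ in $M_1^\Gamma$. By construction $P_1=P_2\circ F$ on $B$, which immediately gives $P_1(B)\subset P_2(M_2^\Gamma)$; and that $F$ is a global $C^\infty$ diffeomorphism onto its image follows either directly (each $F^p$ is, and $F$ is globally injective because $P_1=P_2\circ F$ and both $P_j$ are injective) or by invoking Lemma~\ref{J_is_local_dif} with $J=F=P_2^{-1}\circ P_1$.

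The substantive analytic work has already been absorbed into Lemma~\ref{lemma_determination_near_the_boundary} (boundary determination plus elliptic unique continuation). The only thing that might look like an obstacle --- making sure the local maps $F^p$ are mutually consistent on overlaps --- is trivialized by the injectivity of the Poisson embedding, so I expect the writeup itself to be short and essentially bookkeeping.
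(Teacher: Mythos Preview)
Your proposal is correct and follows essentially the same approach as the paper: define local maps $F^p=(\psi_2^p)^{-1}\circ\psi_1^p$ on the sets $B_p=(\psi_1^p)^{-1}(\psi_1^p(U_1^p)\cap\psi_2^p(U_2^p))$, verify $P_1=P_2\circ F^p$ there, and take $B=\bigcup_p B_p$. The only cosmetic difference is that the paper skips your explicit gluing argument and instead, having established $P_1(B)\subset P_2(M_2^\Gamma)$, directly sets $F:=P_2^{-1}\circ P_1$ and invokes Lemma~\ref{J_is_local_dif}; your observation that injectivity of $P_2$ forces compatibility on overlaps is exactly what makes this shortcut legitimate.
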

\begin{proof}
 By Lemma~\ref{lemma_determination_near_the_boundary}, in its notation, we have that for any $p \in \Gamma$ there exist $U_j\subset M_j^\Gamma$ such that for all $f\in C_c^\infty(\Gamma)$, we have
 \[
 u_f^1\circ \psi_1^{-1}=u_f^2\circ \psi_2^{-1} \text{ on } \psi_1(U_1)\cap \psi_2(U_2).
 \]
 Thus for $x\in \psi_1^{-1}(\psi_1(U_1) \cap \psi_2(U_2))$, we have
 \[
  P_1(x)f=u_f^1(x)=u_f^2 (\psi_2^{-1}\circ \psi_1(x))=P_2(\psi_2^{-1}\circ \psi_1(x))f.
 \]
Setting $B_p=\psi_1^{-1}(\psi_1(U_1) \cap \psi_2(U_2))$ and 
\[
B = \bigcup_{p \in \Gamma} B_p
\]
gives an open set $B \subset M_1^{\Gamma}$ such that $\Gamma \subset B$ and $P_1(B) \subset P_2(M_2^{\Gamma})$. By Lemma \ref{J_is_local_dif} it is enough to set $F = P_2^{-1} \circ P_1$ in $B$.
\end{proof}

We have now shown that knowledge of the DN map on $\Gamma$ determines harmonic functions near $\Gamma$. We proceed in the real analytic case to determine the harmonic functions globally. From this knowledge we will then determine Riemannian manifolds up to isometry in Section~\ref{sec_recovery_of_metric}. 

In the following result a (global) \emph{harmonic morphism} means a mapping that preserves solutions to the Dirichlet problem. Precisely, a $C^{\infty}$ mapping $H: (M_1^{\Gamma}, g_1) \to (M_2^{\Gamma}, g_2)$ is a harmonic morphism, if for any $f\in C_c^\infty(\Gamma)$ we have
\[
u_f^1=u_f^2\circ H.
\]
Here $u_f^1$ and $u_f^2$ are the solutions to the Dirichlet problems for $\Delta_{g_1}$ and $\Delta_{g_2}$ as usual. 
For more details on harmonic morphisms, we refer to~\cite{Wood_book}. (In~\cite{Wood_book} a harmonic morphism is a mapping that also preserves \emph{local} harmonic functions instead of just global harmonic functions, but our results will show that there is no difference at least when $\dim(M_1)=\dim(M_2)$.)

\begin{Theorem} \label{global_determination_of_harmonic_functions}
Let $(M_1, g_1)$ and $(M_2, g_2)$ be compact real analytic manifolds with mutual boundary.
Let $\Gamma$ be an open subset of $\p M$.
Assume that there is a neighborhood $B\subset M_1^\Gamma$ of a boundary point $p\in \Gamma$ 
and a mapping $F:B\to F(B)\subset M_2^\Gamma$ diffeomorphic onto its image such that
 \[
 P_1=P_2\circ F \mbox{ on } B.
 \]
 Then we have $P_1(M_1^\Gamma)=P_2(M_2^\Gamma)$ and 
 \[
 J:=P_2^{-1}\circ P_1:M_1^\Gamma\to M_2^\Gamma
 \]
 is a diffeomorphic global harmonic morphism extending $F$.
\end{Theorem}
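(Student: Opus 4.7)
My plan is to prove that
\[
\Omega := \mathrm{Int}\{x \in M_1^\Gamma : P_1(x) \in P_2(M_2^\Gamma)\}
\]
coincides with all of $M_1^\Gamma$, so that $J := P_2^{-1}\circ P_1$ is globally defined. By hypothesis $B \subset \Omega$; Lemma~\ref{J_is_local_dif} makes $J$ a diffeomorphism of $\Omega$ onto an open subset of $M_2^\Gamma$, we have $J|_B = F$ by injectivity of $P_2$, and the identity $P_1 = P_2\circ J$ is exactly the harmonic morphism property $u_f^1 = u_f^2\circ J$. Since $M_1^\Gamma$ is connected ($M_1^{\mathrm{int}}$ is path-connected and $\Gamma \subset \overline{M_1^{\mathrm{int}}}$) and $\Omega$ is open and nonempty, it suffices to show $\Omega$ is closed in $M_1^\Gamma$.

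Fix $x_* \in \overline{\Omega} \cap M_1^\Gamma$ and a sequence $x_n \in \Omega$ with $x_n \to x_*$. Compactness of $M_2$ and continuity of the Poisson embeddings (Proposition~\ref{P_basics}) let me pass to a subsequence with $J(x_n) \to y_* \in M_2$ and $P_2(y_*) = P_1(x_*)$. The case $y_* \in \partial M \setminus \Gamma$ is excluded because then $u_f^2(y_*) = f(y_*) = 0$ for every $f \in C_c^\infty(\Gamma)$, forcing $P_2(y_*) = 0$ in $\mathcal{D}'(\Gamma)$, contradicting the existence (Proposition~\ref{prop_runge_consequence_first}(a)) of $f \in C_c^\infty(\Gamma)$ with $u_f^1(x_*) \neq 0$. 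So $y_* \in M_2^\Gamma$, and injectivity of $P_2$ (Proposition~\ref{P_is_embedding}) makes $y_*$ the unique subsequential limit, whence $J(x_n) \to y_*$.

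Next I build a local smooth extension. Using Proposition~\ref{prop_runge_consequence_first}(b) choose $f_1,\dots,f_n \in C_c^\infty(\Gamma)$ with $du_{f_1}^2|_{y_*},\dots,du_{f_n}^2|_{y_*}$ linearly independent; then $U := (u_{f_1}^2,\dots,u_{f_n}^2)$ is a harmonic coordinate chart on a neighborhood $W_2$ of $y_*$. Setting $V := (u_{f_1}^1,\dots,u_{f_n}^1)$, the composition $J^{\mathrm{new}} := U^{-1}\circ V$ is smooth on a small enough neighborhood $W$ of $x_*$ and agrees with $J$ on $\Omega \cap W$ because $U\circ J = V$ there. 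Differentiating $P_1 = P_2\circ J$ on $\Omega$ and passing to the limit in the identity $DJ(x_n) = DU(J(x_n))^{-1} DV(x_n)$ yields $DJ(x_n) \to DJ^{\mathrm{new}}(x_*)$ and $DP_2(y_*)\circ DJ^{\mathrm{new}}(x_*) = DP_1(x_*)$; injectivity of $DP_1(x_*)$ from Proposition~\ref{P_is_embedding} then forces $DJ^{\mathrm{new}}(x_*)$ to be injective and hence invertible, so $J^{\mathrm{new}}$ is a local diffeomorphism near $x_*$.

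The delicate remaining step---and the main technical obstacle---is upgrading the matching $P_1 = P_2\circ J^{\mathrm{new}}$ from $\Omega \cap W$ to all of $W$. Shrinking $W$ to a connected coordinate neighborhood on which $J^{\mathrm{new}}$ is a diffeomorphism, and using that such a diffeomorphism of manifolds-with-boundary sends interior to interior, $J^{\mathrm{new}}$ maps $W \cap M_1^{\mathrm{int}}$ into $M_2^{\mathrm{int}}$. Consequently, for each $f \in C_c^\infty(\Gamma)$, the function $\phi_f(x) := u_f^1(x) - u_f^2(J^{\mathrm{new}}(x))$ is real analytic on $W \cap M_1^{\mathrm{int}}$ (as a composition of interior real analytic maps, using real analyticity of the metrics), it vanishes on the nonempty open subset $\Omega \cap W \cap M_1^{\mathrm{int}}$, and so vanishes on $W \cap M_1^{\mathrm{int}}$ by the identity principle, and then on all of $W$ by continuity. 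Thus $W \subset \Omega$, so $\Omega$ is closed and equals $M_1^\Gamma$. Finally, to upgrade $P_1(M_1^\Gamma) \subset P_2(M_2^\Gamma)$ to equality I apply the same argument with the roles of $(M_1, g_1)$ and $(M_2, g_2)$ interchanged, seeded by $F^{-1}: F(B) \to B$: this is legitimate because $F(p) = p$ (the equality $\delta_p = P_1(p) = P_2(F(p))$ of Dirac measures forces $F(p) \in \Gamma$ and then $F(p) = p$), so $F(B)$ contains the boundary point $p \in \Gamma$. Injectivity of $P_1$ then identifies the resulting $K: M_2^\Gamma \to M_1^\Gamma$ with $J^{-1}$, completing the proof.
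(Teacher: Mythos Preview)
Your proof is correct and follows essentially the same approach as the paper: a connectedness argument showing the set where $P_1$ lands in $P_2(M_2^\Gamma)$ is both open and closed, with the closedness step carried out by building harmonic coordinate charts $U$ near the limit point $y_*$ and $V=(u_{f_1}^1,\dots,u_{f_n}^1)$ near $x_*$, then using real analyticity of harmonic functions in the interior to propagate the identity $u_f^1=u_f^2\circ(U^{-1}\circ V)$ from $\Omega\cap W$ to all of $W$. The minor presentational differences---you take $\Omega$ as the interior of the preimage set rather than the largest connected open subset, and you verify invertibility of $DJ^{\mathrm{new}}(x_*)$ via the abstract chain rule $DP_1=DP_2\circ DJ^{\mathrm{new}}$ rather than the paper's explicit ``equation of injectivity'' computation---do not change the substance; your added verification that $F(p)=p$ (via the Dirac-measure identification) to justify swapping roles is a nice touch that the paper leaves implicit.
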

\begin{proof}
 Redefine $B$ to be the largest connected open subset of $M_1^{\Gamma}$ such that $P_1(B)\subset P_2(M_2^\Gamma)$. By our assumption $B$ is nonempty. 
 We will show that $B$ is closed and thus $B=M_1^{\Gamma}$. Then $P_1(M_1^\Gamma) \subset P_2(M_2^\Gamma)$, and from Lemma \ref{J_is_local_dif} it will follow that
 \[
 J=P_2^{-1}\circ P_1: M_1^\Gamma \to M_2^\Gamma
 \]
 is a well defined $C^{\infty}$ diffeomorphism. It is also a harmonic morphism, since for any $f\in C_c^\infty(\Gamma)$ and $x \in M_1^\Gamma$ we would have
 \begin{equation}\label{J_is_harm_morph}
 J(x)=P_2^{-1}(P_1(x)) \Rightarrow P_2(J(x))f=P_1(x)f \iff u_f^2(J(x))=u_f^1(x).
 \end{equation}
 This would prove the claim.
 
 We argue by contradiction and assume that $B$ is not closed. Then there is a sequence $(p_k)$ in $B$ with $p_k \to x_1$ as $k\to \infty$, where $x_1 \in \p B \setminus B\subset M_1^\Gamma$. After passing to a subsequence, there is $x_2 \in M_2$ such that 
 \[
 x_2 = \lim_{k\to\infty}J(p_k).
 \]
 We actually have that $x_2\in M_2^\Gamma$. This is because if $x_2\in \p M\setminus \Gamma$, then~\eqref{p1_p2_relation} applied at $x=p_k$ shows that for all $f\in C_c^\infty(\Gamma)$ we have
 \begin{equation}\label{x2_not_bndr}
 u_f^1(x_1)=\lim u_f^1(p_k)=\lim u_f^2(J(p_k))=u_f^2(x_2)=f(x_2)=0.
 \end{equation}
 This cannot be true by Proposition \ref{prop_runge_consequence_first}, so indeed $x_2$ must be in $M_2^{\Gamma}$.
 
 Let 
 \[
 U=(u_{f_1}^2,u_{f_2}^2,\ldots, u_{f_n}^2)
 \]
 be harmonic coordinates on a neighborhood $\Omega_2\subset M_2^{\Gamma}$ of $x_2$, as in Lemma~\ref{J_is_local_dif}. Here $u_{f_l}^2$, $l=1,\ldots,n$, are global harmonic functions on $M_2$ with boundary values $f_l\in C_c^\infty(\Gamma)$.

The mapping $J=P_2^{-1}\circ P_1: B\to J(B)$ is well defined by Lemma \ref{J_is_local_dif}. Now comes the punch line: we set
 \[
 V=(u_{f_1}^1,u_{f_2}^1,\ldots, u_{f_n}^1).
 \]
We will prove shortly that $V$ is a coordinate system in some neighborhood $\Omega_1 \subset M_1^{\Gamma}$ of $x_1$. The map $U^{-1} \circ V$ will then give us a local identification of neighborhoods $\Omega_1\subset M_1^\Gamma$ of $x_1$ and $\Omega_2\subset M_2^\Gamma$ of $x_2$, such that $\Omega_1$ intersects the complement of the closure of $B$ in $M_1^\Gamma$ (if nonempty). This will allow us to extend $B$ and reach a contradiction.
 
 We now take a small deviation from the main line of the proof, and show that $V$ is also a harmonic coordinate system around $x_1$. To see this, first observe that $V(x_1)=U(x_2)$ since
 \[
  u_{f_l}^1(x_1)=\lim_k u_{f_l}^1(p_k)=\lim_k u_{f_l}^2(J(p_k))=u_{f_l}^2(x_2), \quad l=1,\ldots,n.
 \]
Next we note that we have at $p_k$ the equation of injectivity~\eqref{equation_of_injectivity}
 \begin{equation}\label{A}
 d u_f^1(p_k)\cdot X_k=d u_f^2|_{J(p_k)}\cdot D(P_2^{-1}\circ P_1)(p_k)X_k.
 \end{equation}
 This holds for all $X_k\in T_{p_k}M_1$. Since near $x_1$ on $B$, we have $J=U^{-1}\circ V$, see~\eqref{representation_for_J}, we can substitute this to the equation of injectivity yielding
 \[
 d u_f^1(p_k)\cdot X_k=d u_f^2|_{J(p_k)}\cdot D(U^{-1}\circ V)(p_k)X_k.
 \]
 
 Assume that $X\in T_{x_1}M$ is such that $D(U^{-1}\circ V)(x_1)X=0$. Let us take a sequence $X_k\to X$ in $TM$ as $k\to \infty$. Note that $D(U^{-1}\circ V)$ is a continuous (in fact smooth) matrix field even though we still do not know if it is invertible. Taking the limit as $k\to \infty$ in~\eqref{A} gives
 \[
 d u_f^1(x_1)\cdot X=d u_f^2(x_2)\cdot D(U^{-1}\circ V)(x_1)X.
 \]
 Choosing $f$ so that $\nabla u_f^1(x_1)=X$ (by Proposition \ref{prop_runge_consequence_first}) shows that $X=0$ and consequently that $D(U^{-1}\circ V)(x_1)$ is invertible. Since $U$ is a local diffeomorphism, it follows that $DV(x_1)$ is invertible. Thus $V$ is also a coordinate system near $x_1$ as claimed.
 
Let us continue on the main line of the proof of the proposition. Let $\Omega_1 \subset M_1^{\Gamma}$ be a neighborhood of $x_1$ so that $V$ is a coordinate system in $\Omega_1$, and redefine $\Omega_1$, if necessary, so that $V(\Omega_1)\subset U(\Omega_2)$. On $B$ we have by \eqref{p1_p2_relation}
 \begin{equation}\label{repr_on_B}
 V=U\circ J.
 \end{equation}
 What we will show is that
 \begin{equation}\label{to_be_shown}
 P_1(x)=P_2(U^{-1}\circ V(x)), \quad x\in \Omega_1.
 \end{equation}
 Note that we require~\eqref{to_be_shown} to hold in $\Omega_1$, not only in $B\cap \Omega_1$.
 In particular we will have
 \[
 P_1(B\cup \Omega_1)\subset P_2(M_2^\Gamma).
 \]
 Since $\Omega_1$ in this case extends $B$ to a neighborhood of the point $x_1\in \p B\setminus B$, this will give a contradiction and prove the theorem since $M_1^\Gamma$ is connected. 
 
 So far we have not used the assumption of real analyticity, but we use it now to prove~\eqref{to_be_shown}. To prove it, let $f\in C_c^\infty(\Gamma)$. Now $u_f^j\in C^\omega(\Mi_j)$ and $V$ and $U$ are local $C^{\omega}$ diffeomorphisms in $\Omega_1 \cap \Mi_1$ and $\Omega_2 \cap \Mi_2$, respectively. Thus we have
 \begin{equation}\label{ra}
 u_f^1\circ V^{-1}, u_f^2\circ U^{-1}\in C^\omega(V(\Omega_1 \cap \Mi_1))
 \end{equation}
where $V(\Omega_1 \cap \Mi_1) \subset \R^n$. Since on $B\cap\Omega_1$, we have by~\eqref{J_is_harm_morph} and~\eqref{repr_on_B} that
 \[
 u_f^1=u_f^2\circ J  \mbox{ and } J=U^{-1}\circ V
 \]
 it follows that
 \[
 u_f^1\circ V^{-1}=u_f^2\circ U^{-1} \mbox{ on } V(B\cap\Omega_1).
 \]
 By the real analyticity, stated in~\eqref{ra}, this holds on the whole $V(\Omega_1)$. This means that the coordinate representations of $u_f^1$ and $u_f^2$ in the (harmonic) coordinates $V$ and $U$ agree. 
 
 Given any $f \in C^{\infty}_c(\Gamma)$, we have the chain of equivalences
 \begin{align*}
 & \quad P_1(x)f=P_2(U^{-1}\circ V(x))f, \ \ x\in \Omega_1 \\
 \iff & \quad u_f^1(x)=u_f^2(U^{-1}\circ V(x)), \ \ x\in \Omega_1  \\
 \iff & \quad u_f^1\circ V^{-1}=u_f^2\circ U^{-1} \mbox{ on } V(\Omega_1)\subset \R^n.
 \end{align*}
 Since we have proven the latter, and since the boundary function $f$ was arbitrary, we have proven~\eqref{to_be_shown}. Consequently we have 
 \[
  P_1(\Omega_1)=P_2(U^{-1}(V(\Omega_1)))\subset P_2(\Omega_2)\subset P_2(M_2^\Gamma).
 \]
Thus $B$ extends to a neighborhood of the point $x_1\in \p B\setminus B$, which gives a contradiction. We have now proved that $B$ is closed. Since $M_1^\Gamma$ is connected, we conclude that $P_1(M_1^\Gamma)\subset P_2(M_2^\Gamma)$. We thus also have $J(M_1^\Gamma)\subset M_2^\Gamma$.
 
 Inverting the role of $(M_1,g_1)$ and $(M_2,g_2)$, and replacing $F$ and $J$ in the statement of the theorem by $F^{-1}$ and $J^{-1}$, shows that $P_1(M_1^\Gamma)= P_2(M_2^\Gamma)$ and that $J$ is surjective onto $M_2^\Gamma$. Consequently $J:M_1^\Gamma\to M_2^\Gamma$ is diffeomorphism by Lemma~\ref{J_is_local_dif}. Since we already showed that $J$ is a global harmonic morphism at the beginning of the proof in~\eqref{J_is_harm_morph}, the claim follows.
\end{proof}

Combining Lemma \ref{local_determination_on_B} and Theorem \ref{global_determination_of_harmonic_functions}, we have proved the following statement.

\begin{Theorem}
Let $(M_1,g_1)$ and $(M_2,g_2)$ be compact real analytic Riemannian manifolds, $n\geq 3$, with mutual boundary whose DN maps agree on an open set $\Gamma \subset \p M$. Assume also that $\Gamma$ is real analytic. Then there is a diffeomorphic (global) harmonic morphism $J:M_1^\Gamma\to M_2^\Gamma$ such that $J$ is real analytic in $M_1^{\Gamma}$ and $J|_{\Gamma}=\text{Id}$.
\end{Theorem}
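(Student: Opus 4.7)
The plan is to combine the preceding two results, \textsc{Lemma \ref{local_determination_on_B}} and \textsc{Theorem \ref{global_determination_of_harmonic_functions}}, and then check the additional properties (boundary behavior and real analyticity) that are asserted in the statement but do not appear verbatim in the two ingredients. First, I would invoke \textsc{Lemma \ref{local_determination_on_B}}: since $(M_1,g_1)$ and $(M_2,g_2)$ are real analytic compact manifolds with mutual boundary whose DN maps agree on the real analytic open set $\Gamma \subset \partial M$, this lemma produces an open neighborhood $B \subset M_1^\Gamma$ of $\Gamma$ together with a $C^\infty$ diffeomorphism $F : B \to F(B) \subset M_2^\Gamma$ satisfying $P_1 = P_2 \circ F$ on $B$. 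Feeding this data into \textsc{Theorem \ref{global_determination_of_harmonic_functions}} (applied at any point $p \in \Gamma \subset B$) immediately yields $P_1(M_1^\Gamma) = P_2(M_2^\Gamma)$ and a global diffeomorphic harmonic morphism
\[
J := P_2^{-1}\circ P_1 : M_1^\Gamma \to M_2^\Gamma
\]
which agrees with $F$ on $B$.

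Next I would verify the boundary condition $J|_\Gamma = \mathrm{Id}$. This is essentially built into the construction of $F$: in the proof of \textsc{Lemma \ref{lemma_determination_near_the_boundary}}, the boundary normal coordinate charts $\psi_1$ and $\psi_2$ around each $p \in \Gamma$ are chosen so that $\psi_1|_\Gamma = \psi_2|_\Gamma$, and $F$ is defined locally as $\psi_2^{-1}\circ \psi_1$. Hence $F|_\Gamma = \mathrm{Id}$, and since $J$ extends $F$ we obtain $J|_\Gamma = \mathrm{Id}$.

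It remains to promote $J$ from $C^\infty$ to real analytic on all of $M_1^\Gamma$. In the interior $\Mi_1$ I would argue as in the proof of \textsc{Theorem \ref{global_determination_of_harmonic_functions}}: around any point of $\Mi_1$ one can, via \textsc{Proposition \ref{prop_runge_consequence_first}}, choose $f_1,\dots,f_n \in C_c^\infty(\Gamma)$ so that $V = (u^1_{f_1},\dots,u^1_{f_n})$ and $U = (u^2_{f_1},\dots,u^2_{f_n})$ are coordinate systems near $x$ and $J(x)$ respectively, and then $J = U^{-1}\circ V$ locally. Because $(M_j,g_j)$ are real analytic, harmonic functions are real analytic in $\Mi_j$, so $U$ and $V$ are real analytic charts and therefore $J$ is real analytic in $\Mi_1$. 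Near a boundary point $p \in \Gamma$, the boundary normal coordinates $\psi_1, \psi_2$ are real analytic (since $\Gamma$, $\partial M$ and the $g_j$ are real analytic) and in these coordinates $J$ coincides with the identity $\psi_2^{-1}\circ \psi_1$, so $J$ is real analytic up to $\Gamma$ as well. This completes the plan.

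The only step with any real content beyond bookkeeping is the real analyticity at $\Gamma$, and even there the work has already been done by the boundary determination result from \cite{LU} that is used inside \textsc{Lemma \ref{lemma_determination_near_the_boundary}}: it guarantees that the full jets of $g_1$ and $g_2$ agree at $\Gamma$ in matching boundary normal coordinates, which, combined with real analyticity, forces $\psi_1^{-1*}g_1 = \psi_2^{-1*}g_2$ near $\Gamma$ and hence makes $F$ — and therefore $J$ — real analytic there. No additional obstacle is expected; the theorem is essentially a packaging of the two preceding results with the observation that the local gluing map $F$ constructed in \textsc{Lemma \ref{local_determination_on_B}} is already the identity on $\Gamma$ and real analytic on $B$.
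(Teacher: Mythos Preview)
Your proposal is correct and follows essentially the same route as the paper: combine Lemma~\ref{local_determination_on_B} with Theorem~\ref{global_determination_of_harmonic_functions} to obtain $J=P_2^{-1}\circ P_1$, then verify real analyticity via the local representation $J=U^{-1}\circ V$ in harmonic coordinates in the interior and $J=\psi_2^{-1}\circ\psi_1$ in real analytic boundary normal coordinates near $\Gamma$. Your explicit check that $J|_\Gamma=\mathrm{Id}$ (from $\psi_1|_\Gamma=\psi_2|_\Gamma$) is a small addition the paper leaves implicit.
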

\begin{proof}
We only need to show that $J$ is real analytic in $M_1^{\Gamma}$. In the interior this follows from the representation \eqref{representation_for_J} in terms of harmonic coordinates, which are real analytic in the interior. Near points of $\Gamma$ this follows from the statement of Lemma \ref{lemma_determination_near_the_boundary}, which implies that near points of $\Gamma$ one has $J = \psi_2^{-1} \circ \psi_1$ where $\psi_j$ are real analytic boundary normal coordinates.
\end{proof}

\section{Recovery of the Riemannian metric from a harmonic morphism} \label{sec_recovery_of_metric}

In the previous section we showed that the Poisson embedding can be used to determine the manifold up to a global harmonic morphism from the knowledge of the DN map in the real analytic case, $n\geq 3$. 
To give a new proof for the Calder\'on problem in the real analytic case, we need to show that a global harmonic morphism in this case is an isometry. Throughout this section, we assume that $(M_j,g_j)$, $j=1,2$, are compact 
connected and $C^\infty$ smooth, $n\geq 3$. 


We show that if $J=P_2^{-1}\circ P_1:M_1^\Gamma\to M_2^\Gamma$ is defined, and thus is a global harmonic morphism, it is then a homothety,
\[
J^*g_2=\lambda g_1, \quad \lambda \mbox{ constant},
\]
when $n\geq 3$. If we additionally assume that the DN-maps of $(M_1,g_1)$ and $(M_2,g_2)$ agree on $\Gamma\subset \p M$, then boundary determination implies that $\lambda = 1$.

It is known that a mapping between Riemannian manifolds having the same dimension $n \geq 3$ that pulls back \emph{local} harmonic functions to local harmonic functions is in fact a homothety~\cite{Fu78}, see also~\cite[Cor. 3.5.2]{Wood_book}. Our definition of harmonic morphisms assumes that the mapping pulls back \emph{global} harmonic functions to global harmonic functions. Our condition is seemingly slightly different, but it follows from the next result that, for manifolds having the same dimension, these conditions are equivalent.

We give a proof  that a global harmonic morphism is in fact a homothety when $n\geq 3$ by using harmonic coordinates. This seems to give a new proof for the result in the local case as well.

\begin{Proposition} \label{morphism_to_homothety}
 Let $(M_1,g_1)$ and $(M_2,g_2)$ be $C^\infty$ Riemannian manifolds having the same dimension $n\geq 3$ and having a mutual boundary $\partial M$. Let $\Gamma \subset \partial M$ be a nonempty open set, and let $J:(M_1^\Gamma,g_1)\to (M_2^\Gamma,g_2)$ be a locally diffeomorphic $C^\infty$ global harmonic morphism. Then $J$ is a homothety.
\end{Proposition}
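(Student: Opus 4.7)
The plan is to reduce the problem to a pointwise conformality statement in well-chosen harmonic coordinates, and then use $n\geq 3$ to force the conformal factor to be constant. Fix $x_0\in M_1^\Gamma$ and set $y_0=J(x_0)$. By Proposition~\ref{prop_runge_consequence_first}(b), I pick boundary data $f_1,\ldots,f_n\in C^\infty_c(\Gamma)$ such that $du^2_{f_1}(y_0),\ldots,du^2_{f_n}(y_0)$ are linearly independent, so that $U:=(u^2_{f_1},\ldots,u^2_{f_n})$ is a harmonic coordinate system on a neighborhood $\Omega_2\subset M_2^\Gamma$ of $y_0$. Since $J$ is a local diffeomorphism and a global harmonic morphism, $u^2_{f_k}\circ J=u^1_{f_k}$ is harmonic on $M_1$, so $V:=U\circ J=(u^1_{f_1},\ldots,u^1_{f_n})$ is a harmonic coordinate system on some neighborhood $\Omega_1\subset M_1^\Gamma$ of $x_0$. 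Shrinking if necessary, $V(\Omega_1)=U(\Omega_2)=:\Omega\subset\R^n$ and in these two coordinate systems $J$ is represented by the identity. Let $\tilde g_j$ be the coordinate expression of $g_j$ on $\Omega$.

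In harmonic coordinates the Laplacian has no first-order terms: $\Delta_{\tilde g_j}u=\tilde g_j^{ab}\p_a\p_b u$. Hence, for every $f\in C^\infty_c(\Gamma)$, the common coordinate representative $\tilde u:=u^2_f\circ U^{-1}=u^1_f\circ V^{-1}$ satisfies
\[
\tilde g_1^{ab}\,\p_a\p_b\tilde u=0 \quad\text{and}\quad \tilde g_2^{ab}\,\p_a\p_b\tilde u=0 \quad\text{on }\Omega.
\]
Fix $y\in\Omega$ and any symmetric matrix $A$ with $\tilde g_2^{ab}(y)A_{ab}=0$. A strengthened Runge approximation that prescribes the 2-jet of a global harmonic function at $y$ subject to the sole formal compatibility $\tilde g_2^{ab}(y)A_{ab}=0$ (obtained by combining the machinery of Appendix~\ref{runge_apprx_sec} with a local existence result for harmonic functions with prescribed 2-jet, e.g.\ via a formal power series plus an elliptic correction) produces global harmonic functions on $M_2$ whose Hessians at $y$ in $U$-coordinates converge to $A$. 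Passing to the limit above then forces $\tilde g_1^{ab}(y)A_{ab}=0$. Thus the linear functionals $A\mapsto\tilde g_j^{ab}(y)A_{ab}$ on $S^2\R^n$ have the same kernel for $j=1,2$; being nonzero, they are proportional, so there is a smooth positive function $\lambda$ on $\Omega$ with $\tilde g_1=\lambda\,\tilde g_2$.

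To finish, I invoke the conformal transformation formula
\[
\Delta_{\tilde g_1}u=\lambda^{-1}\Delta_{\tilde g_2}u+\frac{n-2}{2\lambda^{2}}\,\tilde g_2^{ab}(\p_a\lambda)(\p_b u),
\]
applied to the coordinate functions $x^k$, which are harmonic for both $\tilde g_1$ and $\tilde g_2$. This yields $(n-2)\tilde g_2^{kb}\p_b\lambda=0$ for every $k$, and since $n\geq 3$ and $\tilde g_2^{-1}$ is invertible, $\p_b\lambda\equiv 0$. Hence $\lambda$ is constant on each chart of the above form. The pointwise scalar ratio of $J^*g_2$ and $g_1$ is therefore a locally constant positive function on the connected manifold $M_1^\Gamma$, and thus globally constant, giving $J^*g_2=c\,g_1$ for some $c>0$.

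The main obstacle I anticipate is the pointwise proportionality in the middle paragraph: it requires a Runge-type approximation that prescribes the Hessian of a global harmonic function at an interior point subject only to the compatibility $\tilde g_2^{ab}A_{ab}=0$. This is slightly stronger than Proposition~\ref{prop_runge_consequence_first}(b), but should follow by combining the classical Lax--Malgrange duality with the existence of local harmonic functions with prescribed 2-jet, and is precisely the kind of statement Appendix~\ref{runge_apprx_sec} is designed to provide. The remaining steps are linear algebra together with the classical conformal-change identity, where the dimensional hypothesis $n\geq 3$ enters decisively through the factor $n-2$.
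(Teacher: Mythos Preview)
Your proof is correct and follows essentially the same route as the paper's own argument: set up harmonic coordinates $U$ near $J(x_0)$ and $V=J^*U$ near $x_0$, use the prescribed-Hessian Runge result (this is exactly Proposition~\ref{runge_prescribed_jet}, so your anticipated obstacle is already handled in the paper) to force $\tilde g_1^{-1}$ and $\tilde g_2^{-1}$ to be proportional at each point, and then exploit harmonicity of the coordinate functions together with $n\geq 3$ to kill $\nabla\lambda$. The only cosmetic difference is in the last step: the paper computes the contracted Christoffel symbols and uses $\Gamma_a(\lambda g_2)=\Gamma_a(g_2)-\tfrac{n-2}{2}\partial_a\log\lambda$ with $\Gamma_a(g_1)=\Gamma_a(g_2)=0$, whereas you apply the conformal change formula for $\Delta$ to the coordinate functions $x^k$; since $\Delta_g x^k=-\Gamma^k(g)$, these are the same computation in different clothing.
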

\begin{proof}
 Let $x\in M_1^\Gamma$ and let $U=(u_{f_1}^2,\ldots, u_{f_n}^2)$ be an $n$-tuple of global harmonic functions that define harmonic coordinates on $\Omega_2$ near $J(x)\in M_2^\Gamma$, where $f_k\in C_c^\infty(\Gamma)$. This can be done by Proposition \ref{prop_runge_consequence_first} upon choosing $\{ d u_{f_1}^2(J(x)), \ldots, du_{f_n}^2(J(x)) \}$ linearly independent. Since $J$ is locally invertible, we have that $V=J^*U$ is a coordinate system near $x$ on $\Omega_1:=J^{-1}(\Omega_2)$. Since $J$ is a global harmonic morphism, the coordinate system $V$ is harmonic.
 
 In the coordinates $V$ and $U$ the coordinate representations of harmonic functions $u_f$ and $v_f=J^*u_f$ agree for arbitrary $f\in C_c^\infty(\Gamma)$. This is because
 \[
 u\circ U^{-1}=v\circ V^{-1} \iff v=u\circ U^{-1}\circ V = u\circ J =J^*u.
 \]
 In any $g_j$-harmonic coordinates, $j=1,2$, the Laplace-Beltrami operator is particularly simple:
 \[
 \Delta_{g_j}u=-g_j^{ab}\p_a\p_bu, \quad j=1,2.
 \]
 Here we abuse notation and denote by $g_1$ and $g_2$ the coordinate representations $(V^{-1})^*g_1$ and $(U^{-1})^*g_2$, respectively. Define $z:=V(x)=U(J(x))$ so $z \in \R^n$, and $\Omega:=V(\Omega_1)=U(\Omega_2)\subset\R^n$.
 
 For any symmetric matrix $(H_{ij})$ such that $g_2^{ij}(z)H_{ij}=0$ we may find by Runge approximation a global $g_2$-harmonic function $u = u_f$ with $f \in C^{\infty}_c(\Gamma)$ whose Hessian at $z$ is $(H_{ij})$ in the $g_2$-harmonic coordinates $U$. This is proved in Proposition~\ref{runge_prescribed_jet} (note that $\mathrm{Hess}_g(w)$ corresponds to $(\partial_{jk} w)$ in harmonic coordinates). 
 Thus we have that
 \begin{equation}\label{g_orthocomplement}
 \Tr{g_1(z)^{-1}H}=0=\Tr{g_2(z)^{-1}H},
 \end{equation}
 since the coordinate representations of the harmonic functions $v = v_f$ and $u = u_f$, and thus their Hessians, agree in the coordinates $V$ and $U$.
 
 Since $H$ can be any symmetric matrix with $g_2^{ij}(z)H_{ij}=0$, the above means that $g_1^{-1}$ and $g_2^{-1}$ have the same orthocomplement at $z$ with respect to the Hilbert-Schmidt inner product in the space of symmetric matrices. Thus $g_1^{-1}(z) = \lambda(z)^{-1}g_2^{-1}(z)$ for some nonzero real number $\lambda(z)$. Due to the positive definiteness of $g_j$, $j=1,2$, we have that $\lambda(z)>0$. The argument above can be repeated for all $z\in \Omega$, and we have
 \begin{equation}\label{comparable}
 g_1(z)=\lambda(z)g_2(z), \quad z\in \Omega,
 \end{equation}
 where $\lambda$ is a positive smooth function ($\lambda$ is smooth since $g_1$ and $g_2$ are). 
 
We next show that the function $\lambda$ is constant in $\Omega$. For this we use the fact that the coordinates, where~\eqref{comparable} holds, are harmonic. This is equivalent to saying that the contracted Christoffel symbols $g_j^{ab}\Gamma_{ab}^c(g_j)$ vanish. By lowering the index, this means that 
 \[
 \Gamma_a(g_j)=0,
 \]
 where $\Gamma_a(g_j)=-|g_j|^{-1/2}(g_j)_{ab}\p_c(|g_j|^{1/2}(g_j)^{bc})$.
 By taking the contracted Christoffel symbol of the equation~\ref{comparable}, we have
 \begin{equation}\label{Gamma_trick}
 0=\Gamma_a(g_1)=\Gamma_a(\lambda g_2)=\Gamma_a(g_2)-\frac{n-2}{2}\p_a\log \lambda=-\frac{n-2}{2}\p_a\log \lambda.
 \end{equation}
 Here we have used the following simple computation for the conformal scaling of the contracted Christoffel symbols:
  \begin{align*}
 \Gamma_a(\lambda g)&=-|\lambda g|^{-1/2}(\lambda g)_{ab}\p_c(|\lambda g|^{1/2}(\lambda g)^{bc})\\
 &=-\lambda^{-n/2+1}|g|^{-1/2}g_{ab}\p_c(\lambda^{n/2-1}|g|^{1/2}g^{bc}) \\
 &=\lambda^{-n/2+1}\left(\lambda^{n/2-1}\Gamma_a-\p_a\lambda^{n/2-1}\right) \\
 &=\Gamma_a-\lambda^{-n/2+1}\left(\frac{n}{2}-1\right)\lambda^{n/2-2}\p_a\lambda=\Gamma_a-\frac{n-2}{2}\p_a\log \lambda.
 \end{align*}
 Since $n\geq 3$, the equation~\ref{Gamma_trick} shows that $\lambda$ is constant in $\Omega$. Recalling that $g_1(z)$ and $g_2(z)$ were the coordinate representations of $g_1$ and $g_2$ in coordinates $V$ and $U$, with $V=J^*U$, we have $g_1=\lambda J^*g_2$ on $\Omega_1$. Since this identity holds near an arbitrary point $x\in M_1^\Gamma$ and since $M_1^{\Gamma}$ is connected, we have proved the claim.
\end{proof}

\begin{Remark}
We remark that in the setting of the proof above we can by equation~\ref{g_orthocomplement} actually express (a multiple of) $g_j^{-1}$ at $z$ in terms of Hessians of solutions $u_f^j$ at $z$ for some $f\in C_c^\infty(\Gamma)$, $j=1,2$. Let $H_j^k$, $k=1,\ldots,m$, $m=\frac{n(n+1)}{2}-1$, be a basis for the orthocomplement $\{g_j(z)^{-1}\}^\bot$ in the space of symmetric matrices equipped with the Hilbert-Schmidt inner product. By the Runge approximation of Proposition~\ref{runge_prescribed_jet}, we may find $f_k$ so that $H_k^j=\text{Hess}_{g_1}(u^1_{f_k}(z))=\text{Hess}_{g_2}(u^2_{f_k}(z))$.

Then we have
\begin{equation}\label{xpres_yourself}
g_j^{-1}(z) =\lambda_j \ast (H_j^1\wedge H_j^2\wedge \cdots \wedge H_j^m), \quad \lambda_j=\mbox{constant}\neq 0, \ j=1,2.
\end{equation}
Here $\ast$ is the Hodge star operator in the space of symmetric matrices defined with respect to the Hilbert-Schmidt inner product and $\wedge$ is the wedge product in that space. The equation~\eqref{xpres_yourself} holds because it follows from the definition of the Hodge star that the right hand side is orthogonal to each $H_j^k$. Thus the right hand side has the same orthocomplement as $g_j^{-1}(x_0)$ has.
\end{Remark}

Another remark is that since a homothety maps harmonic functions to harmonic functions, we have that a mapping between same dimensional Riemannian manifolds is global harmonic morphism if and only if it is a local harmonic morphism as defined in \cite[Definition 4.1.1]{Wood_book}.

Next we show that if the DN maps agree, the homothety constant $\lambda$ is $1$.
\begin{Proposition}\label{det_of_lambda_bndr}
 Assume the conditions of Proposition \ref{morphism_to_homothety}, and assume in addition that the DN maps of $(M_1,g_1)$ and $(M_2,g_2)$ agree on an open subset $\Gamma$ of the boundary. Also assume that $J|_{\Gamma} = \mathrm{Id}$. Then 
 \[
 J^*g_2=g_1.
 \]
\end{Proposition}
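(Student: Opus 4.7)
The plan is to combine Proposition~\ref{morphism_to_homothety} with boundary determination on $\Gamma$. By Proposition~\ref{morphism_to_homothety}, the global harmonic morphism $J$ is a homothety, so there is a positive constant $\lambda$ with $J^*g_2=\lambda g_1$ on $M_1^\Gamma$. The task reduces to showing $\lambda=1$, and since $\lambda$ is constant it is enough to check the equality $J^*g_2 = g_1$ at a single boundary point in any single tangential direction.

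First I would invoke Lemma~\ref{lemma_determination_near_the_boundary}: since the DN maps agree on $\Gamma$ and $\Gamma$ (together with both manifolds) is at least $C^\infty$, the standard boundary determination argument used there gives that in matching boundary normal coordinates near any $p\in\Gamma$, the metrics $g_1$ and $g_2$ have the same full Taylor jet at $\{x^n=0\}$. In particular, the induced metrics on $\Gamma$ coincide: for every $p\in\Gamma$ and every $X\in T_p\Gamma$,
\[
(g_1)_p(X,X)=(g_2)_p(X,X).
\]

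Next I would use the hypothesis $J|_\Gamma=\mathrm{Id}$. For any $p\in\Gamma$ and any tangential vector $X\in T_p\Gamma\subset T_pM_1^\Gamma$, the differential satisfies $dJ_p(X)=X\in T_{p}M_2^\Gamma$. Therefore
\[
(J^*g_2)_p(X,X)=(g_2)_{J(p)}(dJ_p X,dJ_p X)=(g_2)_p(X,X).
\]
Combining with the homothety identity and the boundary metric equality above,
\[
\lambda\,(g_1)_p(X,X)=(J^*g_2)_p(X,X)=(g_2)_p(X,X)=(g_1)_p(X,X).
\]
Picking any nonzero $X\in T_p\Gamma$ (possible since $n\geq 3$, so in particular $\dim \Gamma=n-1\geq 1$) forces $\lambda=1$, hence $J^*g_2=g_1$ on $M_1^\Gamma$.

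There is no real obstacle in this step; the only subtlety is making sure that the boundary determination result from Lemma~\ref{lemma_determination_near_the_boundary} is applicable under the assumptions of Proposition~\ref{morphism_to_homothety} (which only assumes $C^\infty$ regularity, not real-analyticity). But the boundary determination of the full Taylor jet of $g$ at $\Gamma$ in boundary normal coordinates from the DN map is a purely local, $C^\infty$ statement from \cite{LU}, so this works without any analyticity hypothesis. Thus the proof reduces to the short tangential computation above.
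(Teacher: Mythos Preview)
Your proof is correct and follows essentially the same approach as the paper: invoke Proposition~\ref{morphism_to_homothety} to get $J^*g_2=\lambda g_1$ with $\lambda$ constant, then use boundary determination from \cite{LU} together with $J|_\Gamma=\mathrm{Id}$ to compare the two metrics on a tangential vector at a point of $\Gamma$ and conclude $\lambda=1$. Your remark that only the $C^\infty$ boundary determination (not real-analyticity) is needed here is exactly the point the paper makes by referring to the \emph{proof} of Lemma~\ref{lemma_determination_near_the_boundary} rather than its statement.
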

\begin{proof}
By Proposition \ref{morphism_to_homothety}, we know that $J$ is homothety and that 
\[
g_1 = \lambda J^* g_2 \text{ in $M_1^{\Gamma}$}, \qquad J|_{\Gamma} = \mathrm{Id}.
\]
Fix a point $p \in \Gamma$, and use boundary determination (see the proof of Lemma \ref{lemma_determination_near_the_boundary}) to deduce that 
\[
g_1 = \Psi^* g_2 \text{ in $U_1$}, \qquad \Psi|_{U_1 \cap \Gamma} = \mathrm{Id}
\]
for some diffeomorphism $\Psi$ defined in a neighborhood $U_1$ of $p$ in $M_1^{\Gamma}$.

Now if $v \in T_p(\partial M)$, the first equation and the fact that $J|_{\Gamma} = \mathrm{Id}$ imply that 
\[
g_1(v, v) = \lambda g_2(v,v),
\]
while the second equation gives that 
\[
g_1(v,v) = g_2(v,v).
\]
Thus $\lambda = 1$.
\end{proof} 

Combining the results so far, we have achieved a new proof of the uniqueness in the Calder\'on problem in the real analytic case when $n\geq 3$ \cite{LassasUhlmann}: \\[2pt]

\noindent \textbf{Theorem.} \emph{Let $(M_1,g_1)$ and $(M_2,g_2)$ be compact real analytic Riemannian manifolds, $n\geq 3$, with mutual boundary whose DN maps agree on an open set $\Gamma \subset \p M$. Assume also that $\Gamma$ is real analytic. Then there is a real analytic diffeomorphism $J:M_1^\Gamma\to M_2^\Gamma$ such that $g_1 = J^* g_2$ and $J|_{\Gamma}=Id$.}

\section{Uniqueness in the 2D Calder\'on problem} \label{sec_calderon_2d}

In this section we use the Poisson embedding technique to give a new proof of uniqueness in the Calder\'on problem in dimension $2$. This result is also due to~\cite{LassasUhlmann}. In this section we assume $(M_1,g_1)$ and $(M_2,g_2)$ are compact, connected $C^\infty$ Riemannian manifolds with mutual boundary $\partial M$. Note that it is not required that the manifolds are real analytic.

\begin{Theorem}[Uniqueness in the Calder\'on problem in 2D]\label{unique2d}
 Let $(M_1,g_1)$ and $(M_2,g_2)$ be two-dimensional compact connected $C^\infty$ Riemannian manifolds with mutual boundary $\partial M$. Assume that the DN maps agree on an open subset $\Gamma \subset \p M$. Then there is a conformal diffeomorphism $J:M_1^\Gamma\to M_2^\Gamma$ such that 
 \[
 J^*g_2=\lambda g_1.
 \]
 Here $\lambda$ is a smooth positive function in $M_1^{\Gamma}$, $\lambda|_{\Gamma}=1$, and $J|_\Gamma=\mathrm{Id}$.
\end{Theorem}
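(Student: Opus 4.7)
The plan is to mirror the three-step strategy of Sections~\ref{sec_det_of_harmonic_ftions}--\ref{sec_recovery_of_metric}, replacing the role of real analyticity of the metric by two dimension-two facts: the Laplacian is conformally invariant ($\Delta_{\lambda g} = \lambda^{-1}\Delta_g$ when $n=2$), and in any isothermal chart where $g = e^{2\phi}\delta$ one has $\Delta_g = e^{-2\phi}\Delta_{\mathrm{eucl}}$, so every harmonic function on $(M_j,g_j)$ is real analytic in such charts even though $g_j$ is only smooth.

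\emph{Step 1 (local determination near $\Gamma$).} I would prove the 2D analog of Lemma~\ref{lemma_determination_near_the_boundary}: for each $p \in \Gamma$ there are charts $\psi_j : U_j \to \mathbb{R}^2$, $U_j \subset M_j^\Gamma$, with $\psi_1|_\Gamma = \psi_2|_\Gamma$, such that $u_f^1 \circ \psi_1^{-1} = u_f^2 \circ \psi_2^{-1}$ on $\psi_1(U_1)\cap \psi_2(U_2)$ for every $f \in C_c^\infty(\Gamma)$. The charts are chosen to be isothermal, with $\Gamma$ lying on $\{x^2 = 0\}$ parametrized by $g_j$-arclength; since $\Lambda_{g_j}$ recovers the induced boundary metric, the parametrizations can be synchronized and, by composing with a boundary-preserving conformal automorphism of the half-plane, the isothermal conformal factor can be arranged to vanish on the boundary for both metrics. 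In these coordinates $u_f^j \circ \psi_j^{-1}$ is Euclidean harmonic, the pair $(f, \Lambda_{g_j} f)$ translates into the \emph{same} Dirichlet and Neumann data on $\psi(\Gamma)$ for both $j$, and Holmgren's theorem yields the desired agreement near $\psi(p)$.

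\emph{Step 2 (global identification).} Lemma~\ref{local_determination_on_B} then applies verbatim and produces an open $B \supset \Gamma$ in $M_1^\Gamma$ and a diffeomorphism $F : B \to F(B) \subset M_2^\Gamma$ with $P_1 = P_2 \circ F$ on $B$. I would then run the proof of Theorem~\ref{global_determination_of_harmonic_functions} to extend $F$ to a global harmonic-morphism diffeomorphism $J = P_2^{-1} \circ P_1 : M_1^\Gamma \to M_2^\Gamma$ with $J|_\Gamma = \mathrm{Id}$. Inspecting that proof, real analyticity of the metric enters exactly once — at the final step, to promote the identity $u_f^1 \circ V^{-1} = u_f^2 \circ U^{-1}$ from $V(B \cap \Omega_1)$ to the full $V(\Omega_1)$. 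This promotion is free in 2D: in an isothermal atlas the harmonic functions $u_f^j$ and the harmonic coordinate maps $U, V$ are all real analytic (being Euclidean harmonic, respectively compositions of Euclidean harmonic functions), so $u_f^j \circ V^{-1}$ and $u_f^2 \circ U^{-1}$ are real analytic functions on open subsets of $\mathbb{R}^2$ and the analytic continuation step goes through unchanged.

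\emph{Step 3 (conformal class and normalization).} I would then apply the proof of Proposition~\ref{morphism_to_homothety} to the harmonic morphism $J$. The Runge/Hessian argument through~\eqref{g_orthocomplement} is dimension-independent (when $n=2$ the space of symmetric matrices is three-dimensional and each nonzero element has a two-dimensional orthocomplement) and yields $g_1 = \lambda J^* g_2$ on $M_1^\Gamma$ for some smooth positive $\lambda$. The subsequent contracted-Christoffel computation acquires the prefactor $(n-2)/2$, which vanishes identically in 2D, so no constraint on $\lambda$ is imposed — precisely the conformal assertion of the theorem. Finally, $\lambda|_\Gamma = 1$ is obtained by repeating the argument of Proposition~\ref{det_of_lambda_bndr} restricted to vectors $v$ tangent to $\Gamma$: since $J|_\Gamma = \mathrm{Id}$ one has $g_1(v,v) = \lambda\, g_2(v,v)$, and the induced boundary metric is determined by the DN map so $g_1(v,v) = g_2(v,v)$ on $\Gamma$.

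The main obstacle I expect is Step 1, specifically the construction of isothermal coordinates whose conformal factors coincide on $\Gamma$ — this is the only point that requires nontrivial input from two-dimensional boundary determination (that the induced boundary metric is recovered by the DN map) and a careful exploitation of the boundary-preserving conformal freedom of the half-plane. Once this normal form is in place, conformal invariance of $\Delta_g$ and real analyticity of harmonic functions in 2D reduce the remainder of the proof to a nearly mechanical adaptation of the $n \geq 3$ argument.
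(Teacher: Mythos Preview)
Your three-step outline matches the paper's architecture closely, and Steps~2 and~3 are essentially correct. In fact your Step~2 is a pleasant variant: the paper does \emph{not} re-run Theorem~\ref{global_determination_of_harmonic_functions} verbatim but instead builds \emph{isothermal} coordinates $U=(u_1,u_f^2)$, $V=(v_1,u_f^1)$ near $x_2,x_1$ from a single global harmonic function and its local harmonic conjugate, so that $u_f^j\circ U^{-1}$ and $u_f^j\circ V^{-1}$ are Euclidean-harmonic and one may invoke unique continuation for $\Delta_{\mathbb R^2}$ directly. This forces the paper to prove midway (via Proposition~\ref{morphism_to_homothety2d} applied on $B$) that $J$ is already conformal on $B$, so that harmonic conjugates are preserved and $V=U\circ J$ on $B\cap\Omega_1$. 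Your route---take $U=(u_{f_1}^2,u_{f_2}^2)$, $V=(u_{f_1}^1,u_{f_2}^1)$ exactly as in the $n\ge 3$ argument, and observe that in an isothermal atlas every $g_j$-harmonic function is real analytic, hence so are $U,V,U^{-1},V^{-1}$ and the compositions $u_f^j\circ V^{-1}$---gives $V=U\circ J$ on $B$ immediately from the harmonic-morphism identity and avoids that detour. Both arguments are valid.

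There is, however, a genuine gap in your Step~1. You propose to normalise the isothermal conformal factor to $1$ on $\Gamma$ by composing with a boundary-preserving conformal automorphism of the half-plane. This cannot work for merely $C^\infty$ metrics: any holomorphic map has real-analytic boundary trace, so the induced reparametrisation of $\Gamma$ is real analytic, whereas the arclength function $s(x^1)=\int_0^{x^1} e^{\phi(t,0)}\,dt$ you would need to match is in general only smooth. More to the point, arranging two isothermal charts $\psi_j$ with $\psi_1|_\Gamma=\psi_2|_\Gamma$ already requires that the real-analytic boundary structures induced from $(M_1,g_1)$ and $(M_2,g_2)$ coincide, which is not a priori available. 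The paper (Lemma~\ref{bndr_determination_sothermal_coordinates}, proved in Appendix~\ref{2D_appx}) sidesteps this by first passing to boundary normal coordinates---where \cite{LU} gives $g_1=g_2$ on $\Gamma$ and hence the Hodge stars agree there---and only then building isothermal charts $U_j=(\tilde u^j,\tilde v^j)\circ\psi_j$ from a \emph{global} harmonic function $u^j$ with common boundary value and its harmonic conjugate $\tilde v^j$; the Jacobians $DT_j$ then automatically agree on $\Gamma$, which is exactly what is needed to transport the Cauchy data. You correctly identify Step~1 as the crux, but the mechanism you sketch is not the right one.
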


There is no assumption on real analyticity in this result. The proof relies on the fact that on two-dimensional manifolds there exist \emph{isothermal coordinates} near any point, i.e.\ coordinates $(u_1, u_2)$ such that $du_1 = * du_2$, see \cite[Section 5.10]{Taylor}. In these coordinates the metric looks like $g_{jk} = c \delta_{jk}$ for some positive function $c$, see Lemma \ref{isotherm_char}. Isothermal coordinates are also harmonic coordinates in dimension $2$. We will use both of these facts.

We first prove local determination of harmonic functions near a boundary point, and then extend local determination to global determination. These are analogues of Lemma~\ref{local_determination_on_B} and Theorem~\ref{global_determination_of_harmonic_functions}. After this, a two-dimensional version of Proposition~\ref{morphism_to_homothety} determines the metric up to a conformal mapping.

For the determination of harmonic functions near a boundary point, we note that in isothermal coordinates a $g$-harmonic function actually satisfies the Laplace equation in a subset $\R^2$. We show that the boundary determination result~\cite{LU} of the metric in boundary normal coordinates implies determination of the metric on the boundary also in isothermal coordinates. Determination of harmonic functions near the boundary then follows from unique continuation for harmonic functions on $\R^2$.

The determination of harmonic functions near the boundary in isothermal coordinates involves some technicalities. These are consequences of the fact that the boundary determination result of~\cite{LU}, that we rely on, is given in boundary normal coordinates instead of isothermal coordinates. We address the technicalities in the next lemma, whose proof is given in the Appendix~\ref{2D_appx}.
\begin{Lemma}\label{bndr_determination_sothermal_coordinates}
  Let $(M_1,g_1)$ and $(M_2,g_2)$ be two-dimensional Riemannian manifolds with boundary whose DN maps agree on an open subset $\Gamma \subset \p M$. Then for any $p\in \Gamma$ there are isothermal coordinates $U_j$, $j=1,2$, defined on neighborhoods $\Omega_j\subset M_j^\Gamma$ of $p$ such that the following statements hold:
  \begin{enumerate}
   \item There is an open subset $\Gamma_0$ of $\Gamma$ with $p \in \Gamma_0$ such that $U_1(\Gamma_0)=U_2(\Gamma_0)=:\tilde{\Gamma}\subset \R^2$ and $U_1|_{\Gamma_0} = U_2|_{\Gamma_0}$.
   \item If $f\in C_c^\infty(\Gamma)$, then the Cauchy data of the coordinate representations $U_1^{-1*}u_f^1$ and $U_2^{-1*}u_f^2$ agree on $\tilde{\Gamma}\subset \R^2$.
  \end{enumerate}
\end{Lemma}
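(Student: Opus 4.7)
The plan is to construct isothermal coordinates $U_j$ near $p$ on each $M_j$ from a common Dirichlet datum, by pairing a global harmonic function with its local harmonic conjugate. Two ingredients drive the argument: the assumed equality of DN maps on $\Gamma$, and the smooth boundary determination result of~\cite{LU} (invoked as in the proof of Lemma~\ref{lemma_determination_near_the_boundary}), which in boundary normal coordinates makes the Taylor expansions of $g_1$ and $g_2$ on $\Gamma$ coincide; in particular $g_1|_{T\Gamma}=g_2|_{T\Gamma}$ near $p$.

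I would first pick $f_0\in C_c^\infty(\Gamma)$ with $df_0|_{T_p\Gamma}\neq 0$ and set $u^j:=u_{f_0}^j$, so that $du^j(p)\neq 0$. On a small simply connected neighborhood $\Omega_j\subset M_j^\Gamma$ of $p$ the 1-form $*_{g_j}du^j$ is closed by harmonicity and therefore admits a smooth primitive $v^j$ normalized by $v^j(p)=0$. Then $U_j:=(u^j,v^j)$ is isothermal near $p$, i.e.\ $U_j^{-1*}g_j=c_j(dx_1^2+dx_2^2)$ for some smooth positive $c_j$ on $U_j(\Omega_j)$. For statement~(1), the first coordinates match on $\Gamma$ since $u^j|_\Gamma=f_0$; for the second, the relation $dv^j=*_{g_j}du^j$ yields that the tangential derivative of $v^j$ along $\Gamma$ equals, with a sign common to $j=1,2$, $-\p_{\nu_j}u^j=-\Lambda_g f_0$, which is independent of $j$ by the DN-map hypothesis. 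Since $g_1|_{T\Gamma}=g_2|_{T\Gamma}$ provides a common arclength parameter along $\Gamma$ near $p$, integrating from $p$ with $v^j(p)=0$ gives $v^1=v^2$ on some open $\Gamma_0\subset\Gamma$ containing $p$. This establishes~(1) with $\tilde\Gamma:=U_1(\Gamma_0)=U_2(\Gamma_0)$.

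For~(2), fix $f\in C_c^\infty(\Gamma)$ and set $\tilde u^j:=u_f^j\circ U_j^{-1}$. The Dirichlet trace $\tilde u^j|_{\tilde\Gamma}=f\circ U_j^{-1}|_{\tilde\Gamma}$ is the same for both $j$ by~(1). For the Euclidean normal derivative at $x\in\tilde\Gamma$ with $p'=U_j^{-1}(x)$, isothermality forces $(U_j^{-1})_*N$ to be $g_j$-orthogonal to $T_{p'}\Gamma$ with $g_j$-length $c_j^{1/2}(x)$, hence $(U_j^{-1})_*N=c_j^{1/2}(x)\nu_j$; the chain rule then gives
\[
\p_N\tilde u^j(x)=c_j^{1/2}(x)\,\p_{\nu_j}u_f^j(p'),
\]
and $\p_{\nu_j}u_f^j(p')$ is independent of $j$ by the DN-map hypothesis. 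It remains to show $c_1=c_2$ on $\tilde\Gamma$: pushing a Euclidean unit tangent $T$ to $\tilde\Gamma$ forward through $U_j^{-1}$ yields the same tangent vector in $T_{p'}\Gamma$ for $j=1,2$ by~(1), and its $g_j$-length squared equals $c_j(x)$; boundary determination $g_1|_{T\Gamma}=g_2|_{T\Gamma}$ then forces $c_1=c_2$ on $\tilde\Gamma$.

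The main obstacle I anticipate is the Neumann-trace comparison in~(2): it requires carefully tracking how the conformal factor $c_j$ enters the push-forward $(U_j^{-1})_*N$ under the isothermal identification, and then invoking smooth boundary determination at the level of the induced metric on $\Gamma$ to synchronize $c_1$ and $c_2$ along $\tilde\Gamma$. The rest is a routine check once $f_0$ is chosen so that $du^j(p)\neq 0$.
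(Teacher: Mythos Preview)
Your argument is correct and follows essentially the same approach as the paper: build isothermal coordinates from a global harmonic function and its local conjugate, then use the DN-map hypothesis together with boundary determination $g_1|_{T\Gamma}=g_2|_{T\Gamma}$ to match both the boundary traces of the coordinates and the Cauchy data. The only organizational difference is that the paper routes the construction through boundary normal coordinates (so that the comparison of Jacobians handles~(2) at once), whereas you work intrinsically on the $M_j$ and instead track the conformal factor $c_j$ explicitly; the content is the same.
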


\begin{Lemma}[Near boundary determination in 2D]\label{det_near_bndr_2d}
Assume the conditions in the previous lemma. Then for any $p\in \Gamma$ there are  
 isothermal coordinates $U_j$, $j=1,2$, defined on neighborhoods $\Omega_j \subset M_j^{\Gamma}$ of $p$ 
 such that for $f\in C_c^\infty(\Gamma)$, we have
 \[
 u_f^1\circ U_1^{-1}(x)=u_f^2\circ U_2^{-1}(x), \quad x\in U_1(\Omega_1)\cap U_2(\Omega_2)\subset \{ x^2 \geq 0 \}.
 \]
 Moreover, there exists an open set $B \subset M_1^\Gamma$ with $\Gamma \subset B$ and a $C^{\infty}$ diffeomorphism $F: B \to F(B) \subset M_2^\Gamma$ such that 
 \[
  F(B) \subset M_2^\Gamma, \qquad  P_1 = P_2 \circ F \text{ on $B$}.
  \]
\end{Lemma}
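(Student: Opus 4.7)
The plan is to first reduce the equality of harmonic functions near the boundary to a unique continuation statement for the \emph{flat} Laplacian in $\R^2$, exploiting that isothermal coordinates are conformal, and then to globalize the identification via the Poisson embedding exactly as in Lemma~\ref{local_determination_on_B}.

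For the first assertion, I would start by applying Lemma~\ref{bndr_determination_sothermal_coordinates} at each $p\in\Gamma$ to obtain isothermal coordinate charts $U_j:\Omega_j\to U_j(\Omega_j)\subset\{x^2\geq 0\}$ on neighborhoods of $p$ in $M_j^\Gamma$, together with a relatively open set $\Gamma_0\ni p$ of $\Gamma$ on which $U_1$ and $U_2$ coincide onto a common set $\tilde\Gamma\subset\{x^2=0\}$. After shrinking $\Omega_j$ if needed I may assume $U_1(\Omega_1)\cap U_2(\Omega_2)$ is a connected open subset of $\{x^2\geq 0\}$ whose boundary meets $\{x^2=0\}$ in an open subset of $\tilde\Gamma$. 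In dimension two, isothermal coordinates are harmonic coordinates and the metric looks like $c\,\delta_{jk}$, so the Laplace--Beltrami operator is just a positive multiple of the flat Laplacian (see Lemma~\ref{isotherm_char}). Hence, writing $\tilde u_f^j := u_f^j\circ U_j^{-1}$, each $\tilde u_f^j$ satisfies the flat Laplace equation $\Delta \tilde u_f^j = 0$ on $U_j(\Omega_j)$.

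By part~(2) of Lemma~\ref{bndr_determination_sothermal_coordinates}, the two flat-harmonic functions $\tilde u_f^1$ and $\tilde u_f^2$ have the same Cauchy data on the relatively open piece $\tilde\Gamma$ of $\{x^2=0\}$. Applying Holmgren's uniqueness theorem (or equivalently, performing Schwarz reflection across $\tilde\Gamma$ and invoking analyticity of flat-harmonic functions, as in \cite[Theorem 3.3.1]{Isakov_book}), I conclude
\[
\tilde u_f^1(x) = \tilde u_f^2(x), \qquad x\in U_1(\Omega_1)\cap U_2(\Omega_2),
\]
which is the first assertion. Note this argument is analogous to the one carried out in boundary normal coordinates in the proof of Lemma~\ref{lemma_determination_near_the_boundary}, the only new ingredient being that in dimension two a conformally flat representation is available globally in isothermal charts.

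For the second assertion, the construction of $B$ and $F$ now proceeds exactly as in the proof of Lemma~\ref{local_determination_on_B}. For each $p\in\Gamma$ set
\[
B_p := U_1^{-1}\bigl(U_1(\Omega_1)\cap U_2(\Omega_2)\bigr) \subset M_1^\Gamma,
\]
which is an open neighborhood of $p$ in $M_1^\Gamma$. The first part yields, for any $x\in B_p$ and any $f\in C_c^\infty(\Gamma)$,
\[
P_1(x)f = u_f^1(x) = \tilde u_f^1(U_1(x)) = \tilde u_f^2(U_1(x)) = u_f^2\bigl(U_2^{-1}\circ U_1(x)\bigr) = P_2\bigl(U_2^{-1}\circ U_1(x)\bigr)f,
\]
so $P_1(B_p)\subset P_2(M_2^\Gamma)$. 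Letting $B:=\bigcup_{p\in\Gamma} B_p$, the injectivity of the Poisson embeddings (Proposition~\ref{P_is_embedding}) forces the local maps $U_2^{-1}\circ U_1$ to agree with $P_2^{-1}\circ P_1$ on each $B_p$ and hence to patch together into a globally defined map $F:=P_2^{-1}\circ P_1$ on $B$. By Lemma~\ref{J_is_local_dif}, $F$ is a $C^\infty$ diffeomorphism onto $F(B)\subset M_2^\Gamma$, and by construction $P_1 = P_2\circ F$ on $B$ and $\Gamma\subset B$.

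The main technical point is the first step: one must carefully shrink the isothermal neighborhoods so that $U_1(\Omega_1)\cap U_2(\Omega_2)$ is a connected domain in $\{x^2\geq 0\}$ with an open portion of its boundary contained in $\tilde\Gamma$, in order to apply unique continuation from the boundary. Everything else is a direct transcription of the arguments already used in the real-analytic case, with the crucial difference that in two dimensions the reduction to a flat-harmonic equation in isothermal coordinates removes any need for real-analyticity of the metric.
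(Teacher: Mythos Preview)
Your argument is essentially identical to the paper's: reduce to the flat Laplacian in isothermal coordinates, invoke unique continuation from matching Cauchy data on $\tilde\Gamma$ (the paper cites the same reference \cite[Theorem 3.3.1]{Isakov_book}), set $F=U_2^{-1}\circ U_1$ locally, and then enlarge $B$ exactly as in Lemma~\ref{local_determination_on_B}. One minor caveat: Lemma~\ref{bndr_determination_sothermal_coordinates} as stated only gives $\tilde\Gamma\subset\R^2$, and its proof in the appendix explicitly notes that the isothermal charts need not map $\Gamma_0$ into $\{x^2=0\}$, so your Schwarz-reflection remark does not apply literally; but the unique continuation result you cite works for a smooth curved boundary piece, so the argument goes through unchanged.
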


\begin{proof}
Let $f\in C_c^\infty(\Gamma)$ and let $u_f^j$, $j=1,2$, be the corresponding harmonic functions on $(M_j,g_j)$. Let $U_j$ be the isothermal coordinates of the previous lemma. Then the Cauchy data of $u_f^j$ agree on $\tilde{\Gamma}\subset \R^2$ in coordinates $U_j$. In isothermal coordinates, which are always also harmonic coordinates in dimension $2$, the Laplace-Beltrami equation for $u_f^j$ reads
\[
c_1^{-1}\Delta_{\R^n}(u_f^1\circ U_1^{-1})=0=c_2^{-1}\Delta_{\R^n}(u_f^2\circ U_2^{-1}).
  \]
  Thus we see that $u_f^j\circ U_j^{-1}$ satisfy the same Euclidean Laplace equation with the same Cauchy data locally on a smooth mutual part of the boundary of the domain. By elliptic unique continuation, see e.g.~\cite[Theorem 3.3.1]{Isakov_book}, and by setting $F=U_2^{-1}\circ U_1$ we obtain the claim with $B$ replaced by $U_1^{-1}(U_1(\Omega_1)\cap U_2(\Omega_2))$. We can then enlarge $B$ as in Lemma \ref{local_determination_on_B} to conclude the proof.
  \end{proof}

We record the following:
\begin{Proposition}\label{morphism_to_homothety2d}
 Let $(M_1,g_1)$ and $(M_2,g_2)$ be two-dimensional $C^\infty$ Riemannian manifolds with mutual boundary. Let $J: M_1^{\Gamma} \to M_2^\Gamma$ be a locally diffeomorphic $C^\infty$ global harmonic morphism. Then $J$ is conformal,
 \[
  J^*g_2=\lambda g_1 \text{ in $M_1^{\Gamma}$}
 \]
 for some positive function $\lambda \in C^{\infty}(M_1^{\Gamma})$.
\end{Proposition}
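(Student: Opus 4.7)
The plan is to mimic the proof of Proposition~\ref{morphism_to_homothety} all the way up to the pointwise identity $g_1(z) = \lambda(z) g_2(z)$, and then simply stop: in dimension two this identity is already the desired conclusion, since the obstruction to turning $\lambda$ into a constant in that argument was precisely the nonvanishing of the factor $\frac{n-2}{2}$ in~\eqref{Gamma_trick}.

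Concretely, I would fix $x \in M_1^\Gamma$ and, invoking Proposition~\ref{prop_runge_consequence_first} on $(M_2,g_2)$, choose $f_1,f_2 \in C_c^\infty(\Gamma)$ so that $du_{f_1}^2(J(x))$ and $du_{f_2}^2(J(x))$ are linearly independent. Then $U=(u_{f_1}^2,u_{f_2}^2)$ provides harmonic coordinates on a neighborhood $\Omega_2$ of $J(x)$, and, because $J$ is a local diffeomorphism and a global harmonic morphism, $V=J^*U=(u_{f_1}^1,u_{f_2}^1)$ is a harmonic coordinate system on $\Omega_1:=J^{-1}(\Omega_2)$. In these coordinates the Laplace--Beltrami operator reduces to $\Delta_{g_j}u=-g_j^{ab}\partial_a\partial_b u$ for $j=1,2$, and the identity $v_f=J^*u_f$ translates to $u_f^1\circ V^{-1}=u_f^2\circ U^{-1}$ on $V(\Omega_1)=U(\Omega_2)$, so the coordinate representations of $g_j$-harmonic functions (and hence of their Hessians) agree.

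Next, at $z:=V(x)=U(J(x))$, I would use the Runge approximation statement in Proposition~\ref{runge_prescribed_jet} exactly as in the proof of Proposition~\ref{morphism_to_homothety}: for every symmetric $2\times 2$ matrix $H$ satisfying $g_2^{ij}(z)H_{ij}=0$, there is $f\in C_c^\infty(\Gamma)$ such that the Hessian at $z$ of $u_f^2$ in the harmonic coordinates $U$ equals $H$. Because the coordinate representations agree, the Hessian of $u_f^1$ in $V$ is also $H$, and the $g_1$-harmonicity forces $g_1^{ij}(z)H_{ij}=0$. Running the argument symmetrically shows that $g_1^{-1}(z)$ and $g_2^{-1}(z)$ have the same orthocomplement in the space of symmetric matrices with respect to the Hilbert--Schmidt inner product; since this orthocomplement determines the one-dimensional span, we get $g_1^{-1}(z)=\mu(z)^{-1}g_2^{-1}(z)$ for some $\mu(z)>0$ (positivity from positive-definiteness), equivalently $g_1(z)=\lambda(z)g_2(z)$ with $\lambda(z)>0$.

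Finally, the identity is a pointwise tensor relation, so it globalizes to $g_1=\lambda J^*g_2$ on the neighborhood of $x$, with $\lambda$ smooth because $g_1$ and $g_2$ are. Since $x\in M_1^\Gamma$ was arbitrary and $M_1^\Gamma$ is connected, the locally defined smooth positive function $\lambda$ patches to a single $\lambda\in C^\infty(M_1^\Gamma)$ with $g_1=\lambda J^*g_2$ on all of $M_1^\Gamma$. No additional work is needed in $n=2$: indeed, the analogue of~\eqref{Gamma_trick} reads $0=\Gamma_a(g_1)=\Gamma_a(\lambda g_2)-\frac{n-2}{2}\partial_a\log\lambda=0$, which is vacuous when $n=2$, reflecting exactly that conformal flatness (rather than rigidity up to homothety) is the right two-dimensional conclusion. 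I do not foresee a genuine obstacle here; the only subtle point is to verify that Proposition~\ref{runge_prescribed_jet} applies in dimension two as stated, which it does since its proof is dimension-independent.
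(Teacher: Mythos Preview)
Your proposal is correct and matches the paper's approach exactly: the paper states that the proof is identical to that of Proposition~\ref{morphism_to_homothety} except that one cannot deduce $\lambda$ is constant via~\eqref{Gamma_trick}, and omits the details. Your write-up fills in precisely those details and correctly identifies that the $\frac{n-2}{2}$ factor is the only obstruction, which vanishes when $n=2$.
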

The proof is identical to that of Proposition~\ref{morphism_to_homothety} except that we cannot deduce that $\lambda(x)$ is constant by the argument using equation \eqref{Gamma_trick}. We omit the proof.

We will now prove global determination of harmonic functions.

\begin{Theorem}\label{global_determination_of_harmonic_functions_2D}
Let $(M_1,g_1)$ and $(M_2,g_2)$ be compact $2$-dimensional $C^\infty$ smooth Riemannian manifolds with mutual boundary. Let $\Gamma$ be an open subset of $\p M$. Assume that there is a neighborhood $B\subset M_1^\Gamma$ of a boundary point $p\in \Gamma$ and a mapping $F:B\to F(B)\subset M_2^\Gamma$ diffeomorphic onto its image such that
 \[
 P_1=P_2\circ F.
 \]
 Then we have $P_1(M_1^\Gamma)=P_2(M_2^\Gamma)$ and 
 \[
 J=P_2^{-1}\circ P_1:M_1^\Gamma\to M_2^\Gamma
 \]
 is $C^\infty$ diffeomorphic global harmonic morphism extending $F$.
\end{Theorem}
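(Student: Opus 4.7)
The plan is to mimic the proof of Theorem \ref{global_determination_of_harmonic_functions}, with one crucial substitution: the real analyticity of the metric, which is unavailable here, will be replaced by the real analyticity of harmonic functions in isothermal coordinates. Indeed, in any isothermal chart the Laplace--Beltrami operator takes the form $c^{-1}\Delta_{\R^2}$, so every $g$-harmonic function, when expressed in such a chart, is Euclidean harmonic and therefore real analytic, regardless of the regularity of $g$.

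I would redefine $B$ to be the largest connected open subset of $M_1^\Gamma$ with $P_1(B)\subset P_2(M_2^\Gamma)$, and argue by contradiction that $B$ is closed. Proceeding as in Theorem \ref{global_determination_of_harmonic_functions}, one obtains $x_1\in \p B\setminus B\subset M_1^\Gamma$ and $x_2\in M_2^\Gamma$ as the limits of $p_k\in B$ and $J(p_k)$; the argument that $x_2\in M_2^\Gamma$ rather than in $\p M\setminus \Gamma$ uses only Proposition \ref{prop_runge_consequence_first} and is dimension independent. Using Proposition \ref{prop_runge_consequence_first} pick $f_1,f_2\in C_c^\infty(\Gamma)$ so that $U=(u_{f_1}^2, u_{f_2}^2)$ is a coordinate chart on a neighborhood $\Omega_2\subset M_2^\Gamma$ of $x_2$, and set $V=(u_{f_1}^1, u_{f_2}^1)$. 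The equation-of-injectivity argument from the proof of Theorem \ref{global_determination_of_harmonic_functions} is purely algebraic and shows, verbatim in dimension two, that $V$ is a coordinate chart on some neighborhood $\Omega_1\subset M_1^\Gamma$ of $x_1$. Shrinking if necessary, assume $V(\Omega_1)\subset U(\Omega_2)$, and recall that $V=U\circ J$ on $B\cap\Omega_1$.

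The decisive step is to upgrade the identity $u_f^1\circ V^{-1}=u_f^2\circ U^{-1}$, which holds on $V(B\cap\Omega_1)$ by the preceding, to the full connected set $V(\Omega_1)$. To do this I would fix an isothermal chart $W_j$ on a neighborhood of $x_j$ in $M_j^\Gamma$; interior points are covered by the classical construction on smooth surfaces, and a potential boundary case by Lemma \ref{bndr_determination_sothermal_coordinates}. In $W_j$-coordinates every harmonic function $u_f^j\circ W_j^{-1}$ is Euclidean harmonic, hence real analytic. In particular $V\circ W_1^{-1}$ and $U\circ W_2^{-1}$ have real analytic components with invertible differentials at $W_j(x_j)$, so they are real analytic diffeomorphisms near those points; consequently
\[
u_f^1\circ V^{-1}=(u_f^1\circ W_1^{-1})\circ (V\circ W_1^{-1})^{-1}, \qquad u_f^2\circ U^{-1}=(u_f^2\circ W_2^{-1})\circ (U\circ W_2^{-1})^{-1}
\]
are real analytic on $V(\Omega_1)$ and $U(\Omega_2)$ respectively. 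They agree on the open set $V(B\cap\Omega_1)$, so by analytic continuation they agree on all of $V(\Omega_1)$. This gives $P_1=P_2\circ(U^{-1}\circ V)$ on $\Omega_1$, hence $P_1(\Omega_1)\subset P_2(M_2^\Gamma)$, contradicting the maximality of $B$. Thus $B=M_1^\Gamma$, and Lemma \ref{J_is_local_dif} shows $J=P_2^{-1}\circ P_1$ is a smooth diffeomorphism onto its image; interchanging the roles of $M_1$ and $M_2$ yields surjectivity onto $M_2^\Gamma$, and the harmonic morphism property is immediate from $P_1=P_2\circ J$, as at the start of the proof of Theorem \ref{global_determination_of_harmonic_functions}.

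The main obstacle is identifying the correct substitute for the real analyticity of the metric used in the three dimensional case; once one observes that isothermal coordinates make harmonic functions Euclidean harmonic, every other step is a direct transcription of the argument for Theorem \ref{global_determination_of_harmonic_functions}.
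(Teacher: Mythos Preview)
Your argument is correct and reaches the same conclusion, but it differs from the paper's proof in an interesting way.

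The paper builds $U$ and $V$ directly as \emph{isothermal} coordinates: it picks a single $f\in C_c^\infty(\Gamma)$ with $du_f^2(x_2)\neq 0$, takes a local harmonic conjugate $u_1$ of $u_f^2$ near $x_2$, and sets $U=(u_1,u_f^2)$; similarly $V=(v_1,u_f^1)$ with $v_1$ a harmonic conjugate of $u_f^1$ near $x_1$. Because $u_1$ is only a \emph{local} harmonic function (not of the form $u_h^2$ for some boundary datum $h$), the identity $V=U\circ J$ on $B\cap\Omega_1$ is not automatic: the paper must show $v_1=u_1\circ J$, and this is where Proposition~\ref{morphism_to_homothety2d} (conformality of $J$ on $B$) and a Hodge-star computation enter. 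Once $U,V$ are isothermal, $u_f^1\circ V^{-1}$ and $u_f^2\circ U^{-1}$ are Euclidean harmonic and the extension follows from unique continuation for $\Delta_{\R^2}$.

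Your route takes $U=(u_{f_1}^2,u_{f_2}^2)$ and $V=(u_{f_1}^1,u_{f_2}^1)$ built from \emph{two} global solutions, so $V=U\circ J$ on $B\cap\Omega_1$ is immediate from the harmonic morphism relation and you bypass Proposition~\ref{morphism_to_homothety2d} entirely. The price is that $U,V$ are only harmonic (not isothermal) coordinates, so $u_f^j\circ U^{-1}$ need not be Euclidean harmonic; you recover real analyticity by composing through auxiliary isothermal charts $W_j$, which is clean and valid. One minor correction: your citation of Lemma~\ref{bndr_determination_sothermal_coordinates} for the boundary case is inapt, since that lemma assumes the DN maps agree, which is not a hypothesis of the present theorem. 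You only need the classical fact that isothermal coordinates exist near any point of a smooth surface with boundary (e.g.\ extend slightly across $\Gamma$ and use the interior result), so this is a citation slip rather than a gap.
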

\begin{proof}
We proceed as in the proof of Theorem~\ref{global_determination_of_harmonic_functions} to which we refer the reader for more details. Let us recall the notation and some facts from there. We redefine $\emptyset\neq B\subset M_1^\Gamma$ to be the largest open connected set such that $P_1(B)\subset P_2(M_2^\Gamma)$. The task is to show that $B$ is closed. We argue by contradiction and assume that it is not. Then the points $x_1\in \p B\setminus B$ and $x_2\in M_2^\Gamma$ are limits of sequences $(p_k)\subset B$ and $(J(p_k))\subset J(B)$. If $f\in C_c^\infty(\Gamma)$, we have $u_f^1=u_f^2\circ J$ on $B$.

We construct isothermal coordinates on neighborhoods $\Omega_1$ of $x_1\in M_1^\Gamma$ and $\Omega_2$ of $x_2\in M_2^\Gamma$ as follows. Let us first choose by Runge approximation a boundary function $f\in C_c^\infty(\Gamma)$ such that
\[
 du_{f}^2(x_2)\neq 0.
\]
Let us denote by $u_1$ a harmonic conjugate of $u_f^2$ near $x_2$. This is a function solving near $x_2$ the equation
\[
du_1 = -\ast_{g_2} d u_f^2,
\]
where $\ast_{g_2}$ is the Hodge star of $g_2$. A local solution $u_1$ exists since the right hand side is closed, because $u_f^2$ is harmonic. We may assume $u_1(x_2)=0$. Let us denote
\[
U=(u_1,u_f^2).
\]
Then $U$ is an isothermal coordinate system on a neighborhood $\Omega_2$ of $x_2$, see Lemma~\ref{isotherm_char}.

Likewise, let $v_1$ be a harmonic conjugate of $u_f^1$ near $x_1$ with $v_1(x_1)=0$. Thus $v_1$ solves
\[
 dv_1=-\ast_{g_1} d u_f^1.
\]
We set
\[
 V=(v_1,u_f^1).
\]
These are isothermal coordinates on a neighborhood $\Omega_1$ of $x_1$. The fact that the Jacobian of $V$ is invertible at $x_1$ follows from
\[
 du_f^1(x_1)=\lim du_f^1(p_k)=\lim du_f^2(J(p_k))=du_f^2(x_2)\neq 0.
\]
Redefine $\Omega_1$, if necessary, so that $V(\Omega_1)\subset U(\Omega_2)$.

We next show that on $B\cap \Omega_1$ we have $v_1=u_1\circ J$. By using $u_f^1=u_f^2\circ J$ on $B$ we have
\[
 - dv_1=\ast_{g_1} d u_f^1=\ast_{g_1} d (J^*u_f^2)=\ast_{g_1}J^*du_f^2.
\]
By Lemma~\ref{J_is_local_dif} $J$ is a $C^{\infty}$ diffeomorphism in $B$. By Proposition~\ref{morphism_to_homothety2d} applied with $M_1^{\Gamma}$ replaced by $B$ and $M_2^{\Gamma}$ replaced by $J(B)$ (the proof of Proposition \ref{morphism_to_homothety2d} is really a pointwise argument and applies in this case), we have that $J$ is a conformal mapping on $B$, $J^*g_2=\lambda g_1$. Thus we have
\[
\ast_{g_1}J^*du_f^2=J^*(\ast_{(g_2/(\lambda\circ J^{-1}))}du_f^2).
\]
Since Hodge star is conformally invariant when operating on $1$-forms in dimension $2$, the above is
\[
J^*(\ast_{g_2}du_f^2)=-J^*du_1=-d(u_1\circ J).
\]
Thus $dv_1=d(u_1\circ J)$ on $\Omega_1\cap B$. Since $v_1(x_1)=u_1(x_2)=0$, we have $v_1=u_1\circ J$ on $\Omega_1\cap B$. Consequently on $\Omega_1\cap B$ we have 
\[
 V=J^*U, \text{ or equivalently } J=U^{-1}\circ V.
\]
(The point here is that $U^{-1}\circ V$ is defined on the whole $\Omega_1$ and gives us a good candidate for a local extension of $J$ onto the whole $\Omega_1$.)

 Let $f\in C_c^{\infty}(\Gamma)$. To conclude the proof, we will show that on $V(\Omega_1)$, we have
 \[
 u_f^1\circ V^{-1}=u_f^2\circ U^{-1}.
 \]
 Since $u_f^1=u_f^2\circ J$ on $B$, for $f\in C_c^\infty(\Gamma)$, the above holds on the open set $V(\Omega_1\cap B)$.
 Since the coordinates in question are isothermal and harmonic (where $\Gamma^a(g_j)=0$) we have that
 \[
 c_1^{-1}\Delta_{\R^2}(u_f^1\circ V^{-1})=0=c_2^{-1}\Delta_{\R^2}(u_f^2\circ U^{-1}).
 \]
 Thus $u_f^1\circ V^{-1}$ and $u_f^2\circ U^{-1}$ both satisfy the Laplace equation in $V(\Omega_1) \subset \R^2$. Since these functions agree on the open set $V(\Omega_1\cap B)$ they agree everywhere on $V(\Omega_1)$ by unique continuation. This shows that we may indeed extend $J$ to $B \cup \Omega_1$, which gives a contradiction and concludes the proof.
\end{proof}
\begin{proof}[Proof of Theorem~\ref{unique2d}]
 By Lemma~\ref{det_near_bndr_2d}, we have that there is $B\subset M_1^\Gamma$ and a diffeomorphic harmonic morphism $F:B\to F(B)\subset M_2^\Gamma$. By Theorem~\ref{global_determination_of_harmonic_functions_2D} the mapping $F$ extends to a global harmonic morphism $J:M_1^\Gamma\to M_2^\Gamma$. Proposition~\ref{morphism_to_homothety2d} shows that $J$ is a conformal mapping. That the implied conformal factor is $1$ on $\Gamma$ follows from calculations in the proof of Proposition~\ref{det_of_lambda_bndr}. 
\end{proof}


\section{On determining the coefficients of quasilinear elliptic operators from source-to-solution mapping}\label{sec:q_lin}
In this section we apply the Poisson embedding technique for a Calder\'on type inverse problem for second order quasilinear elliptic operators on Riemannian manifolds. 
Let $(M,g)$ be a Riemannian manifold with boundary. The quasilinear operators
\[
Q:C^{\infty}(M)\to C^{\infty}(M)
\]
we study are assumed to have the coordinate representation
\begin{equation}\label{quasilinear_op_form}
Qu(x)=\mathcal{A}^{ab}(x,u(x),du(x))\nabla_a\nabla_bu(x)+\mathcal{B}(x,u(x),du(x)).
\end{equation}
Here we assume that $\mathcal{A}^{ab}(x,c,\sigma)\in T_{x}^2M$ for given $(x,c,\sigma)\in M\times \R\times T_x^*M$ is a $2$-tensor field and $\mathcal{B}(x,c,\sigma)$ is a function. The covariant derivative $\nabla$ is determined by $g$ as usual. We consider $\mathcal{A}$ and $\mathcal{B}$ as mappings
\begin{equation}\label{elliptic_a}
\mathcal{A}: M\times \R\otimes T^*M \to T^2_0(M)
\end{equation}
and 
\begin{equation}\label{elliptic_b}
\mathcal{B}: M\times \R\otimes T^*M \to \R
\end{equation}
where $\pi(\mathcal{A}(x,c,\sigma))=x$. Here $\pi$ is the canonical projection $T^2_0(M)\to M$ for $2$-contravariant tensors. The notation $M\times \R\otimes T^*M$ refers to the subset
\[
\{(x,c,\sigma)\in M\times \R\times T_x^*M\}
\]
of $M\times\R\times T^*M$. We can also think of $M\times \R\otimes T^*M$ as tensor product of the trivial line bundle $M\times \R$ and the vector bundle $T^*M$.

We assume that $Q$ is quasilinear elliptic, which means that for all $(x,c,\sigma)\in M\times \R\otimes T^*M$ and $\xi\in T_x^*M$ we have
\begin{equation}\label{elliptic}
 \mathcal{A}^{ab}(x,c,\sigma)\xi_a\xi_b\geq\lambda |\xi|^2_g, \quad \lambda >0.
\end{equation}
We assume that the coefficients $\mathcal{A}$ and $\mathcal{B}$ are $C^\infty$ smooth and in the main theorem of this section, Theorem~\ref{qlin_main_thm}, we assume that they are real analytic. We will assume throughout that $0$ is a solution, i.e.\ $Q(u) = 0$, which is equivalent with the condition 
\begin{equation} \label{b_assumption}
\mB(x,0,0) = 0.
\end{equation}
The linearization of $Q$ at $u=0$ is the operator 
\begin{equation} \label{q_linearization}
Lu = \mA^{ab}(x,0,0) \nabla_a \nabla_b u + \frac{\partial \mB}{\partial u}(x,0,0) u + \frac{\partial \mB}{\partial \sigma_j}(x,0,0) \partial_j u
\end{equation}
where in the last term we have identified $T_x^* M$ with $\mR^n$. We will also assume that, for some fixed $\alpha$ with $0 < \alpha < 1$,  
\begin{equation} \label{l_assumption}
L: C^{2,\alpha}(M) \cap H^1_0(M) \to C^{\alpha}(M) \text{ is invertible}.
\end{equation}
It follows from these assumptions that there are unique small solutions of $Q(u) = f$, $u|_{\partial M} = 0$ when $f$ is small. The proof of the next standard result is given in Appendix \ref{sec_appendix}.

\begin{Proposition} \label{prop_nonlinear_wellposedness}
Let $(M,g)$ be a compact manifold with smooth boundary, let $\mathcal{A}$, $\mathcal{B}$ be $C^{\infty}$ maps satisfying \eqref{elliptic_a}--\eqref{b_assumption}, and let 
\[
Q(u) = \mA^{ab}(x, u, du) \nabla_a \nabla_b u + \mB(x,u, du).
\]
Assume that $L$ is the linearization of $Q$ at $u=0$ given in \eqref{q_linearization}, and assume that \eqref{l_assumption} holds.

There are constants $C, \eps, \delta > 0$ such that whenever $\norm{f}_{C^{\alpha}(M)} \leq \eps$, the equation $Q(u) = f$ in $M$ with $u|_{\partial M} = 0$ has a solution $u \in C^{2,\alpha}(M)$ satisfying $\norm{u}_{C^{2,\alpha}(M)} \leq C \norm{f}_{C^{\alpha}(M)}$. If $u_j\in C^{2,\alpha}(M)$, $j=1,2$, both solve $Q(u_j) = f$ in $M$ with $u_j|_{\partial M} = 0$ and $\norm{u_j}_{C^{2,\alpha}(M)} \leq \delta$, then $u_1=u_2$.
\end{Proposition}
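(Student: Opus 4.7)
The plan is to prove this by an application of the inverse function theorem in Banach spaces, which is the standard strategy for well-posedness of nonlinear elliptic boundary value problems near a known solution. Let me describe the setup and the steps.

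First I would set the functional-analytic framework. Let $X = \{ u \in C^{2,\alpha}(M) : u|_{\partial M} = 0 \}$ and $Y = C^{\alpha}(M)$, both Banach spaces in their natural norms. Define the nonlinear operator
\[
F : X \to Y, \qquad F(u) = Q(u) = \mathcal{A}^{ab}(\cdot, u, du)\nabla_a\nabla_b u + \mathcal{B}(\cdot, u, du).
\]
By assumption \eqref{b_assumption} we have $F(0) = 0$. The main goal is to verify the hypotheses of the inverse function theorem for $F$ at $u=0$, namely that $F$ is $C^1$ in a neighborhood of $0 \in X$ and that its Fréchet derivative $DF(0) : X \to Y$ is a topological isomorphism.

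Next I would check regularity and the derivative formula. Since $\mathcal{A}$ and $\mathcal{B}$ are $C^\infty$ in all their arguments and the embedding $u \mapsto (u, du)$ sends $C^{2,\alpha}(M)$ continuously into $C^{1,\alpha}(M; \R \otimes T^*M)$, and since $C^{1,\alpha}$ is a Banach algebra in which composition with smooth functions yields Nemytskii operators that are $C^\infty$ Fréchet-differentiable, the coefficient maps
\[
u \longmapsto \mathcal{A}(\cdot, u, du), \qquad u \longmapsto \mathcal{B}(\cdot, u, du)
\]
are $C^\infty$ as maps $X \to C^{1,\alpha}$. Multiplying by $\nabla_a\nabla_b u \in C^{\alpha}$ and using that $C^{1,\alpha} \cdot C^{\alpha} \subset C^{\alpha}$ gives $F \in C^\infty(X, Y)$. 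A direct computation, using that $\nabla_a\nabla_b u |_{u=0} = 0$, shows that the Fréchet derivative at $0$ is exactly the linearization $L$ from \eqref{q_linearization}:
\[
DF(0)\, v = \mathcal{A}^{ab}(x,0,0)\nabla_a\nabla_b v + \frac{\partial \mathcal{B}}{\partial u}(x,0,0)\, v + \frac{\partial \mathcal{B}}{\partial \sigma_j}(x,0,0)\, \partial_j v = Lv.
\]

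Finally I would invoke the inverse function theorem. By assumption \eqref{l_assumption}, $L = DF(0) : X \to Y$ is a bijective bounded linear map, and hence a topological isomorphism by the open mapping theorem. The inverse function theorem then yields open neighborhoods $V \subset X$ of $0$ and $U \subset Y$ of $0$ such that $F : V \to U$ is a $C^\infty$ diffeomorphism. Choosing $\eps > 0$ so that $\{ f \in Y : \|f\|_{C^\alpha} \le \eps \} \subset U$ produces, for each such $f$, a unique $u = F^{-1}(f) \in V$ solving $Q(u) = f$, $u|_{\partial M} = 0$, and the bound $\|u\|_{C^{2,\alpha}} \le C \|f\|_{C^\alpha}$ follows from the Lipschitz continuity of $F^{-1}$ near $0$. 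For the uniqueness among all sufficiently small $C^{2,\alpha}$-solutions, shrink $\delta > 0$ so that the ball of radius $\delta$ in $X$ lies inside $V$; if $u_1, u_2$ both have $C^{2,\alpha}$ norm at most $\delta$ and satisfy $F(u_j) = f$, then both lie in $V$ and injectivity of $F|_V$ gives $u_1 = u_2$.

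The only real obstacle is the verification that $F$ is Fréchet $C^1$ on $X$ with the claimed derivative, as the inverse function theorem itself is then automatic. This step is a standard (though slightly tedious) Nemytskii calculation exploiting that $C^{1,\alpha}(M)$ is an algebra and that smooth superposition operators are $C^\infty$ on Hölder spaces; I would not grind through the estimates here but simply cite the algebra property together with the chain rule.
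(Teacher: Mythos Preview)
Your proof is correct and follows essentially the same route as the paper: both set up the nonlinear map $u \mapsto Q(u)$ between $C^{2,\alpha}$ (with zero boundary data) and $C^{\alpha}$, identify the Fr\'echet derivative at $0$ as the linearization $L$ from \eqref{q_linearization}, and invoke a Banach-space fixed-point principle. The paper uses the implicit function theorem applied to $F(f,u)=Q(u)-f$ on $C^\alpha \times (C^{2,\alpha}\cap H^1_0)$, while you use the inverse function theorem directly on $u\mapsto Q(u)$; these are equivalent, and the only substantive step in either version is the verification of continuous Fr\'echet differentiability, which the paper does by an explicit second-order Taylor remainder estimate rather than by citing the Nemytskii/Banach-algebra machinery you invoke.
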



Operators of the above form appear e.g.\ in the study of minimal surfaces or prescribed scalar curvature questions (Yamabe problem), see \cite{GilbargTrudinger, Taylor} for more information.

We describe the inverse problem we are about to study. Let $Q, \eps, \delta$ be as in Proposition \ref{prop_nonlinear_wellposedness}. We assume that we know the \emph{source-to-solution mapping} of $Q$ on an open subset $W$ of $M$. The source-to-solution mapping 
\begin{equation} \label{s_w_definitiona}
S: \{ f \in C_c^\infty(W) \,;\, \norm{f}_{C^{\alpha}(M)} \leq \eps \} \to C^\infty(W)
\end{equation}
is defined as
\begin{equation} \label{s_w_definition}
S: f\mapsto u|_W,
\end{equation}
where $u$ is the unique solution to
\begin{equation} \label{s_w_definitionb}
Qu=f \mbox{ on } M, \qquad u=0 \mbox{ on } \p M, \qquad \norm{u}_{C^{2,\alpha}(M)} \leq \delta.
\end{equation}
Note that indeed $u \in C^{\infty}(M)$, using Schauder estimates for linear elliptic equations and the fact that $u \in C^{2,\alpha}(M)$. The aim of the inverse problem is to determine the coefficients $\mathcal{A}$ and $\mathcal{B}$ of $Q$ up to a diffeomorphism and possible other symmetries of the problem. When the coefficients of $Q$ are real analytic, our main theorem shows that in this case there is only one additional symmetry, which we describe next.  

We note that there is a simple transformation between coefficient $\mathcal{B}$ and the Christoffel symbols $\Gamma_{ab}^k$ contracted by $\mathcal{A}^{ab}$ that leaves the source-to-solution mapping intact: Assume that $u$ solves
\begin{equation}\label{gauge_sym_begin}
Qu=f \text{ on } M.
\end{equation}
Then $u$ also solves
\[
\tilde{Q}u=f,
\]
where 
\[
\tilde{Q}u=\mathcal{A}^{ab}(x,u(x),du(x))\tilde{\nabla}_a\tilde{\nabla}_bu+\tilde{\mathcal{B}}(x,u(x),du(x))
\]
and where $\tilde{\mathcal{B}}$ is defined as
\begin{equation}\label{Q_gauge}
\tilde{\mathcal{B}}(x,c,\sigma)=\mathcal{B}(x,c,\sigma)+\mathcal{A}(x,c,\sigma)^{ab}(\tilde{\Gamma}_{ab}^k(x)-\Gamma_{ab}^k(x))\sigma_k.
\end{equation}
Here $\tilde{\nabla}$ and $\tilde{\Gamma}_{ab}^k$ denote the Levi-Civita connection and Christoffel symbols of some other Riemannian metric $\tilde{g}$ on $M$. 
Therefore the source-to-solution mapping defined with respect to $\tilde{Q}$ coincides with the source-to-solution map $S$ of $Q$. Note that even though Christoffel symbols does not constitute a tensor field, the difference of two Christoffel symbols $\tilde{\Gamma}_{ab}^k-\Gamma_{ab}^k$ is a tensor field. It follows that we can not make the symmetry vanish by choosing a suitable coordinate system. This symmetry will be called the gauge symmetry of the inverse problem.

The gauge symmetry is an obstruction for finding $\mathcal{B}$, $\mathcal{A}$ and $\Gamma_{ab}^k$ independently of each other in the general case. However, in some cases when we have extra information about the coefficients $\mathcal{A}$ and $\mathcal{B}$, the gauge symmetry vanishes. We give examples of conditions when this happens in Corollary~\ref{qlin_cor_resolve_thm}.

We remark that if the coefficients are not real analytic, other symmetries in the inverse problem can appear. An easy example is the standard Laplace-Beltrami operator in dimension $2$ where one can scale the metric by a positive function that is constant $1$ on the measurement set $W$ without affecting the source-to-solution mapping. Another similar example is given by the conformal Laplacian in dimensions $n\geq 3$~\cite{LLS}.

Our main theorem of this section is the following determination result.
\begin{Theorem}\label{qlin_main_thm}
 Let $(M_1,g_1)$ and $(M_2,g_2)$ be compact connected real analytic Riemannian manifolds with mutual boundary and assume that $Q_j$, $j=1,2$, are quasilinear operators of the form~\eqref{quasilinear_op_form} having coefficients $\mathcal{A}_j$, $\mathcal{B}_j$ satisfying \eqref{elliptic_a}--\eqref{l_assumption}. Moreover, assume that $\mathcal{A}_j$ and $\mathcal{B}_j$ are real analytic in all their arguments. 
 
 Let $W_j$, $j=1,2$, be open subsets of $M_j$, and assume that there is a diffeomorphism $\phi:W_1\to W_2$ so that the source-to-solutions maps $S_j$ for $Q_j$ agree in the sense that
\[
\phi^*S_2f=S_1\phi^*f,
\]
for all $f\in C_c^\infty(W_2)$ with $\norm{f}_{C^{\alpha}(M_2)}$ sufficiently small.

Then there is a real analytic diffeomorphism $J:\Mi_1\to \Mi_2$ such that
\[
\mathcal{A}_1=J^*\mathcal{A}_2=:\mathcal{A}
\]
and 
\[
\mathcal{B}_1-J^*\mathcal{B}_2=\mathcal{A}^{ab}(\Gamma(g_1)_{ab}^k-\Gamma(J^*g_2)_{ab}^k)\sigma_k.
\]
The mapping $J$ satisfies
\[
J|_{\Wi_1}=\phi: \Wi_1\to \Wi_2
\]
where $\Wi_1=W_1\cap \Mi_1 \text{ and } \Wi_2=W_2\cap \Mi_2$.
\end{Theorem}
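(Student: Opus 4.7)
The plan is to reduce the nonlinear inverse problem to a linear one via linearization, use a variant of the Poisson embedding adapted to interior source data to construct the diffeomorphism $J$, and then recover the coefficients $\mathcal{A}$ and $\mathcal{B}$ by higher-order linearizations combined with analytic continuation.

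I would begin by linearizing the source-to-solution maps $S_j$ at $f=0$. By Proposition~\ref{prop_nonlinear_wellposedness} and the implicit function theorem, $S_j$ is Fr\'echet smooth near $0$ and its derivative $\Sigma_j:=dS_j|_0$ is the source-to-solution map of the linearization $L_j$ of $Q_j$ at $u=0$,
\[
\Sigma_j(h)=v_h|_{W_j},\qquad L_jv_h=h\text{ in }M_j,\ v_h|_{\p M_j}=0.
\]
Differentiating the hypothesis $\phi^*S_2f=S_1\phi^*f$ at $f=0$ gives $\phi^*\Sigma_2h=\Sigma_1\phi^*h$ for all $h\in C_c^\infty(W_2)$. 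I then introduce a modified Poisson embedding
\[
P_j:\Mi_j\to\mathcal{D}'(W_j),\qquad P_j(x)(h)=v_h(x),
\]
whose distributional kernel is the Green function of $L_j$ with zero Dirichlet data, restricted to $\{x\}\times W_j$. The linearized agreement of source-to-solution maps becomes the pointwise identity $P_1(x)(\phi^*h)=P_2(\phi(x))(h)$ for $x\in\Wi_1$.

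Next I would adapt the embedding theory of Section~\ref{sec_poisson_embed} to the interior-source map $P_j$. A Runge-type approximation for $L_j$ and its formal adjoint, of the kind proved in Appendix~\ref{runge_apprx_sec}, implies that for any $x\in\Mi_j$, $y\in\Mi_j\setminus\{x\}$ and any covector $\xi\in T_x^*M_j$ there is $h\in C_c^\infty(W_j)$ with $v_h(x)\neq v_h(y)$ and with $dv_h(x)=\xi$. This yields injectivity of $P_j$ and of $DP_j$ as in Proposition~\ref{P_is_embedding}. I would then follow the strategy of Section~\ref{sec_det_of_harmonic_ftions}, with $\phi$ on $\Wi_1$ playing the role that Lemma~\ref{local_determination_on_B} played in the DN setting. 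Defining $J:=P_2^{-1}\circ P_1$ on the largest connected open set $B\subset\Mi_1$ containing $\Wi_1$ with $P_1(B)\subset P_2(\Mi_2)$, the analogue of Theorem~\ref{global_determination_of_harmonic_functions} would show that $B=\Mi_1$. The crucial ingredient is that, for $h\in C_c^\infty(W_j)$, the solution $v_h$ satisfies $L_jv_h=0$ on $\Mi_j\setminus\overline{W_j}$ and is therefore real analytic there; picking $n$ such solutions with linearly independent differentials at a candidate limit point of $\p B\setminus B$ gives real analytic ``$L$-harmonic'' charts in which the coordinate representations of the two families agree on an open set and hence, by analytic continuation, everywhere in the chart. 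The outcome is a real analytic diffeomorphism $J:\Mi_1\to\Mi_2$ with $J|_{\Wi_1}=\phi$ satisfying $v_h^1=v_h^2\circ J$ for all such $h$.

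With $J$ in hand I pull $Q_2$ back by $J$ and reduce to a single manifold carrying two operators $Q_1$ and $J^*Q_2$ whose source-to-solution maps on $\Wi_1$ coincide. The first linearization already gives $L_1=J^*L_2$ in $\Mi_1$, yielding in particular $\mathcal{A}_1^{ab}(x,0,0)=(J^*\mathcal{A}_2)^{ab}(x,0,0)$. The full dependence on $(c,\sigma)$ is then read off from higher linearizations of $\phi^*S_2=S_1\phi^*$: differentiating $k$ times at $f=0$ produces, after an easy inductive computation, multilinear maps in $k$-tuples of first linearizations whose values on $W$ encode, modulo contributions from lower derivatives, the $k$-th Taylor coefficients of $\mathcal{A}_j$ and $\mathcal{B}_j$ at $(c,\sigma)=(0,0)$. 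Runge density of first linearizations (Appendix~\ref{runge_apprx_sec}) allows one to isolate each such Taylor coefficient pointwise in $x\in\Wi_1$, up to the built-in gauge freedom: because the covariant Hessian $\nabla_a\nabla_b u=\p_a\p_b u-\Gamma_{ab}^k\p_k u$ couples $\mathcal{A}^{ab}\Gamma_{ab}^k$ into $\mathcal{B}$, shifting $\mathcal{B}_1$ by $\mathcal{A}_1^{ab}(\Gamma(g_1)_{ab}^k-\Gamma(J^*g_2)_{ab}^k)\sigma_k$ produces the same source-to-solution map, precisely matching~\eqref{Q_gauge} and~\eqref{gaugesym}. Real analyticity in $(c,\sigma)$ upgrades the Taylor-coefficient equality at $(0,0)$ to the full identity $\mathcal{A}_1=J^*\mathcal{A}_2$ and the gauge identity for $\mathcal{B}_1-J^*\mathcal{B}_2$ on $\Wi_1\times\R\otimes T^*\Wi_1$. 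A final analytic continuation in $x$, using joint real analyticity of the coefficients and connectedness of $\Mi_1$, extends both identities to all of $\Mi_1\times\R\otimes T^*\Mi_1$.

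The main obstacle is twofold. First, proving the global determination analogue of Theorem~\ref{global_determination_of_harmonic_functions} for the interior-source, non-self-adjoint embedding $P_j$: one must carry out the ``$L$-harmonic coordinate'' construction and the real analytic matching argument while staying away from $\overline{W_j}$, where $v_h$ generally fails to be real analytic, and verify that connectedness of $\Mi_1$ suffices to propagate the identification. Second, the combinatorial bookkeeping in the higher linearizations — expressing the $k$-th Taylor coefficient of $\mathcal{A}_j$ and $\mathcal{B}_j$ in $(c,\sigma)$ as an explicit functional of $d^kS_j|_0$ modulo already recovered data, and carefully separating the genuine coefficient information from the gauge ambiguity~\eqref{gaugesym} at every order.
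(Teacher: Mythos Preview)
Your linearization step and the construction of the diffeomorphism $J$ via an interior-source Poisson embedding are essentially what the paper does: the paper linearizes via Proposition~\ref{prop_nonlinear_linearization}, defines the embedding $R$ in Definition~\ref{Poisson_embed_source_map}, and proves the global determination result as Theorem~\ref{global_determination_of_harmonic_functions_for_L}, handling the ``stay away from $\overline{W_j}$'' issue you flag by choosing solution coordinates with sources supported away from the point in question (Lemma~\ref{solution_coords}).

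Where your proposal diverges is in the recovery of $\mathcal{A}$ and $\mathcal{B}$ on $\Wi_1$. You propose higher-order linearizations of $S_j$ at $f=0$ to extract the Taylor coefficients of $\mathcal{A}_j,\mathcal{B}_j$ in $(c,\sigma)$, and you correctly anticipate that the combinatorics of separating coefficient information from the gauge ambiguity at each order is a genuine obstacle. The paper bypasses this entirely with a much more direct argument (Lemma~\ref{all_equations} and Proposition~\ref{local_det_of_coef_for_Q}): any small $v\in C_c^\infty(\Omega)$, extended by zero, \emph{is} a solution of $Q_2u=F$ on $M_2$ with $F=Q_2(v\circ U_2)$; agreement of the full nonlinear source-to-solution maps then forces $\tilde{Q}_1v=\tilde{Q}_2v$ pointwise in the coordinate chart. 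Choosing $v$ with prescribed $(v(y),dv(y),\partial_{ab}v(y))=(c,\sigma,A)$ for arbitrary small $(c,\sigma)$ and symmetric $A$ lets one read off $\mathcal{A}_1=\phi^*\mathcal{A}_2$ and the gauge relation for $\mathcal{B}$ directly from a single equation at each point, with no inductive bookkeeping. Your higher-linearization route is plausible and is a standard technique elsewhere, but here it trades an elementary pointwise algebra argument for a recursive scheme whose details you have not supplied; the paper's method is both shorter and avoids the second obstacle you list.
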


The assumption $\phi^*S_2f=S_1\phi^*f$, for $f\in C_c^\infty(W_2)$ small, means that the diagram
\[
\xymatrix{
 C_c^\infty(W_2) \ar[d]^{\phi^*} \ar[r]^{S_2} &C^\infty(W_2)\ar[d]^{\phi^*}\\
 C_c^\infty(W_1) \ar[r]^{S_1}
 &C^\infty(W_1)}
\]
commutes when $f$ is small.

We describe our strategy for proving the theorem. By the arguments in the preceding sections, it would be natural to define a mapping analogous to the Poisson embedding for the quasilinear elliptic operator $Q$ and then use tools analogous to those we built around the Poisson embedding. 
However, as far as we know, Runge approximation for quasilinear operators is not known. This prevents us of using this natural approach for the moment.

Instead we do the following. We linearize the source-to-solution mapping (at the mutual solution $0$) that yields a linear Calder\'on type inverse problem for a linear second order elliptic operator whose source-to-solution map is known. For this linearized problem we use the Poisson embedding technique modified slightly to deal with the source-to-solution map instead of the DN map. In this way we will find the manifold up to a real analytic diffeomorphism. This is the first step. The modified Poisson embedding is given in Definition~\ref{Poisson_embed_source_map}.

The second step is the following. We will see that knowing the source-to-solution map on the open set $W$ determines the coefficients $\mathcal{A}$ and $\mathcal{B}$ on $W$, up to the gauge symmetry described in~\eqref{Q_gauge}. In this step we read the coefficients $\mathcal{A}$ and $\mathcal{B}$ in $W$ from the solutions, which is similar to the argument in Proposition~\ref{morphism_to_homothety}.

Since we have determined the manifold up to a real analytic diffeomorphism, we can view the coefficients $\mathcal{A}$ and $\mathcal{B}$ on a single fixed manifold and use standard real analytic unique continuation there. This determines the coefficients of the quasilinear operator on the whole manifold up to a diffeomorphism and the gauge symmetry.

As already mentioned, with some suitable extra information about the coefficients $\mathcal{B}$ and $\mathcal{A}$ the gauge symmetry vanishes and we can determine $\mathcal{A}$ and $\mathcal{B}$ independently. 
\begin{Corollary}\label{qlin_cor_resolve_thm}
 Assume the conditions and notation in Theorem~\ref{qlin_main_thm}, and assume also one of the following:
 \begin{enumerate}
\item[(1)]
$\mathcal{A}_1(x,c,\sigma)$ (or $\mathcal{A}_2(x,c,\sigma)$) is $s$-homogeneous in the $\sigma$-variable and $\mathcal{B}_1(x,c,\sigma)$ and $\mathcal{B}_2(x,c,\sigma)$ are $s'$-homogeneous with $s'\neq s+1$ for all $x\in W_1$ and $c\in \mathbb{R}$; or
 
 %
 
 
\item[(2)]
$\phi:(\Wi_1,g_1|_{\Wi_1})\to (\Wi_2,g_2|_{\Wi_2})$ is an isometry.
\end{enumerate}
 Then we have
 \[
\mathcal{A}_1=J^*\mathcal{A}_2 \mbox{ and } \mathcal{B}_1=J^*\mathcal{B}_2
 \]
 and also
 \[
 \mathcal{A}_1^{ab}\Gamma_{ab}^k(g_1)=(J^*\mathcal{A}_2)^{ab}\Gamma_{ab}^k(J^*g_2).
 \]
\end{Corollary}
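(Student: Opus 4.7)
The plan is to extract both conclusions directly from the gauge identity supplied by Theorem~\ref{qlin_main_thm}. Writing $\mathcal{A} := \mathcal{A}_1 = J^*\mathcal{A}_2$ and $\Delta\Gamma^k_{ab}(x) := \Gamma(g_1)^k_{ab}(x) - \Gamma(J^*g_2)^k_{ab}(x)$, the theorem gives
\[
\mathcal{B}_1(x,c,\sigma) - (J^*\mathcal{B}_2)(x,c,\sigma) = \mathcal{A}^{ab}(x,c,\sigma)\,\Delta\Gamma^k_{ab}(x)\,\sigma_k
\]
on $\Mi_1 \times \R \otimes T^*\Mi_1$. The goal is to show this right-hand side vanishes identically: the equality $\mathcal{B}_1 = J^*\mathcal{B}_2$ is then immediate, and since $\mathcal{A}^{ab}\Delta\Gamma^k_{ab}\sigma_k \equiv 0$ in $\sigma$ forces $\mathcal{A}^{ab}\Delta\Gamma^k_{ab} = 0$ for each $k$, the contracted Christoffel identity also follows.

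For case (1), I would fix $(x,c)$ and view the gauge relation as an equality of functions of $\sigma$. Since $J$ acts only on the base variable, the pullback $J^*\mathcal{B}_2$ inherits the $s'$-homogeneity of $\mathcal{B}_2$ in $\sigma$, and likewise $\mathcal{A} = J^*\mathcal{A}_2$ is $s$-homogeneous; in particular the two alternative hypotheses on $\mathcal{A}_1$ or $\mathcal{A}_2$ are equivalent. Thus the left-hand side is $s'$-homogeneous while the right-hand side is $(s+1)$-homogeneous in $\sigma$. Replacing $\sigma$ by $\lambda\sigma$ gives
\[
\lambda^{s'}\bigl[\mathcal{B}_1 - J^*\mathcal{B}_2\bigr](x,c,\sigma) = \lambda^{s+1}\mathcal{A}^{ab}(x,c,\sigma)\,\Delta\Gamma^k_{ab}(x)\,\sigma_k
\]
for every $\lambda > 0$. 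Were either side nonzero at some triple $(x,c,\sigma_0)$, dividing would force $\lambda^{s'} = \lambda^{s+1}$ for all $\lambda > 0$, contradicting $s' \neq s+1$. Hence both sides vanish identically and the conclusions follow.

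For case (2), the hypothesis that $\phi:(\Wi_1,g_1)\to(\Wi_2,g_2)$ is an isometry combined with $J|_{\Wi_1}=\phi$ gives $J^*g_2 = g_1$ on $\Wi_1$. Theorem~\ref{qlin_main_thm} provides that $J$ is real analytic on $\Mi_1$, and $g_1,g_2$ are real analytic by hypothesis, so both $J^*g_2$ and $g_1$ are real analytic tensor fields on the connected manifold $\Mi_1$. Real analytic continuation from the open set $\Wi_1$ therefore extends the identity $J^*g_2 = g_1$ to all of $\Mi_1$, so $\Delta\Gamma^k_{ab} \equiv 0$ and the right-hand side of the gauge identity vanishes.

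Neither case looks technically delicate: case (1) reduces to an elementary scaling comparison once one recognizes that pullback by $J$ preserves homogeneity in $\sigma$, and case (2) is a direct application of real analytic continuation using the regularity of $J$ already supplied by Theorem~\ref{qlin_main_thm}. The only point that deserves a careful line or two is verifying that the hypothesis on $\mathcal{A}_1$ versus $\mathcal{A}_2$ really is symmetric under the pullback, which the argument sketched above confirms.
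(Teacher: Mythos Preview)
Your argument is essentially the paper's: in case~(1) you compare the homogeneity degrees $s'$ versus $s+1$ of the two sides of the gauge identity, and in case~(2) you use $J|_{\Wi_1}=\phi$ together with real-analytic continuation (you continue the metric $J^*g_2-g_1$, the paper continues the tensor $\Gamma(g_1)-\Gamma(J^*g_2)$; either works). One small overclaim: your assertion that ``$\mathcal{A}^{ab}\Delta\Gamma^k_{ab}\sigma_k\equiv 0$ in $\sigma$ forces $\mathcal{A}^{ab}\Delta\Gamma^k_{ab}=0$ for each $k$'' is not valid in general, since $\mathcal{A}$ itself depends on $\sigma$; the paper's proof of case~(1) simply records that the contracted expression $\mathcal{A}^{ab}\Delta\Gamma^k_{ab}\sigma_k$ vanishes and does not attempt the componentwise statement, while in case~(2) the stronger conclusion $\Delta\Gamma^k_{ab}\equiv 0$ is obtained directly.
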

Note that the corollary does not claim that we can find the Riemannian metrics $g_1$ and $g_2$ up to $J$. An example satisfying the first condition is the  nonlinear Schr\"odinger operator
\[
 u\in C^\infty \mapsto -\Delta_gu+q|u|^2u, \quad q\in C^\infty.
\]
An example satisfying the second condition is the case, where one knows the Riemannian metric and the manifold ($\phi=\text{Id}$) on a measurement set $W$.

 \subsection{Linearized problem}
 Let us first linearize the source-to-solution map of the quasilinear problem. This yields a Calder\'on type inverse problem for the linearized equation. The proof of the following result is given in Appendix \ref{sec_appendix}.

\begin{Proposition} \label{prop_nonlinear_linearization}
Let $(M,g)$ and $Q$ be as in Proposition \ref{prop_nonlinear_wellposedness}. Let $W \subset M$ be open, and let $S$ be the source-to-solution map defined in \eqref{s_w_definition}. Then, for any $f \in C^{\infty}_c(W)$, 
\[
\lim_{t \to 0} \frac{S(tf) - S(0)}{t} = S^L(f) \quad \text{(limit in $C^1(\overline{W})$)}
\]
where $S^L: C^{\infty}_c(W) \to C^{\infty}(W), \ f \mapsto u$ is the source-to-solution map of the linearized equation $Lu = f$ in $M$ with $u|_{\partial M} = 0$, where $L$ is given in \eqref{q_linearization}.
\end{Proposition}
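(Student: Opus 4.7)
The plan is to establish the differentiability by an explicit Taylor expansion of the nonlinear equation and then invert the linearization. First, I invoke Proposition \ref{prop_nonlinear_wellposedness}: for $t$ sufficiently small the solution $u_t := S(tf)$ of $Q(u_t) = tf$, $u_t|_{\partial M} = 0$ exists, is unique in a small $C^{2,\alpha}$ ball, and satisfies $\|u_t\|_{C^{2,\alpha}(M)} \leq C t \|f\|_{C^{\alpha}(M)}$. In particular $S(0) = 0$. Set $v_t := u_t/t$, so that $\|v_t\|_{C^{2,\alpha}(M)}$ is uniformly bounded in $t$ by $C\|f\|_{C^{\alpha}(M)}$, and $v_t|_{\partial M} = 0$.

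Next I Taylor expand $Q(tv_t)$ around $(c,\sigma) = (0,0)$. Since $\mA$ and $\mB$ are $C^{\infty}$ and $\mB(x,0,0) = 0$, the fundamental theorem of calculus yields, pointwise,
\begin{align*}
\mA^{ab}(x, t v_t, t\, dv_t) &= \mA^{ab}(x,0,0) + t\, R^{ab}_t(x), \\
\mB(x, t v_t, t\, dv_t) &= t \Bigl( \tfrac{\partial \mB}{\partial u}(x,0,0) v_t + \tfrac{\partial \mB}{\partial \sigma_j}(x,0,0) \partial_j v_t \Bigr) + t^2 \rho_t(x),
\end{align*}
where the remainders $R^{ab}_t$ and $\rho_t$ are expressed as integrals of second derivatives of $\mA$, $\mB$ evaluated along the segment $s \mapsto (x, s t v_t, s t\, dv_t)$, multiplied by $v_t$ and $dv_t$. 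Because $\mA, \mB \in C^{\infty}$ and $\|v_t\|_{C^{2,\alpha}(M)}$ is bounded, standard product/composition estimates in $C^{\alpha}(M)$ give $\|R^{ab}_t\|_{C^{\alpha}(M)} + \|\rho_t\|_{C^{\alpha}(M)} \leq C'$, uniformly in small $t$. Substituting into the equation $Q(t v_t) = t f$ and dividing by $t$, I obtain
\begin{equation*}
L v_t = f + t \cdot E_t, \qquad v_t|_{\partial M} = 0,
\end{equation*}
where $L$ is the linearization in \eqref{q_linearization} and $\|E_t\|_{C^{\alpha}(M)} \leq C''$ uniformly in $t$ (the extra term $R^{ab}_t \nabla_a \nabla_b v_t$ is controlled since $\nabla^2 v_t \in C^{\alpha}$ and $R^{ab}_t \in C^{\alpha}$).

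Finally, by assumption \eqref{l_assumption}, $L : C^{2,\alpha}(M) \cap H^1_0(M) \to C^{\alpha}(M)$ is an isomorphism, so
\begin{equation*}
v_t = L^{-1}(f) + t \, L^{-1}(E_t) = v + t \, L^{-1}(E_t),
\end{equation*}
where $v := S^L(f)$. The continuity of $L^{-1}$ yields $\|v_t - v\|_{C^{2,\alpha}(M)} \leq C''' t$, which in particular implies convergence in $C^{1}(\overline{W})$. Since $v_t = (S(tf) - S(0))/t$ on $W$ (indeed on all of $M$), the claim follows.

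The main obstacle is simply the bookkeeping of the remainder terms: one must verify that the integral remainders from Taylor's formula, depending on $\mA, \mB$ and on the functions $v_t, dv_t \in C^{1,\alpha}$, remain uniformly bounded in $C^{\alpha}(M)$ as $t \to 0$. This is standard given the $C^{\infty}$ regularity of $\mA, \mB$ and the a priori $C^{2,\alpha}$ bound on $v_t$, but it is the only nontrivial step; everything else is a direct application of Proposition \ref{prop_nonlinear_wellposedness} and the hypothesis \eqref{l_assumption}.
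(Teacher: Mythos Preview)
Your proof is correct and follows essentially the same route as the paper's: both define $v_t = u_t/t$, Taylor expand the coefficients $\mA$ and $\mB$ to extract the linearization $L$, and bound the remainder to conclude $v_t \to S^L(f)$. The only minor difference is technical: the paper bounds the remainder in $L^{\infty}(M)$ and invokes elliptic estimates to obtain $C^1$ convergence, whereas you bound the remainder in $C^{\alpha}(M)$ and invert $L$ directly via hypothesis \eqref{l_assumption}, which even yields the stronger $C^{2,\alpha}$ convergence.
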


We have now seen that linearizing the source-to-solution mapping $S$ of a quasilinear equation leads to a Calder\'on type inverse problem for a linear equation. Next we show that in the real analytic case the source-to-solution mapping of the linearized problem determines the manifold up to a real analytic diffeomorphism. Precisely we will prove: 
\begin{Theorem}\label{global_determination_of_harmonic_functions_for_L}
Let $(M_1, g_1)$ and $(M_2, g_2)$ be compact real analytic manifolds with mutual boundary. Let $L_1$ and $L_2$ be second order uniformly elliptic partial differential operators on $M_1$ and $M_2$, respectively, of the general form
 \begin{equation}\label{L_form}
 L_j=A_j^{ab}(x)\nabla_a\nabla_bu(x)+B_j^a(x)\nabla_au(x)+C_j(x)u(x),
 \end{equation}
 where $A_j^{ab}$ and $B_j^a$ are the components of a $2$-tensor field $A$ and vector field $B_j$, and $C$ is a function. Assume that $L_j$ are injective on $C^{\infty}(M_j) \cap H^1_0(M_j)$, and assume that $A$, $B$ and $C$ are real analytic up to boundary.
 
Let $W_j\subset M_j$ be open sets, and let $\phi:W_1\to W_2$ be a diffeomorphism so the source-to-solutions maps $S^L_j$ of $L_j$ satisfy 
\[
S^L_1\phi^*f=\phi^*S^L_2f, \quad f\in C_c^\infty(W_2).
\]
Then there is a real analytic diffeomorphism $J:\Mi_1\to \Mi_2$ with $J|_{\Wi_1}=\phi|_{\Wi_1}: \Wi_1\to \Wi_2$. If $f\in C_c^\infty(W_2)$, then $u_{f\circ \phi}^1=J^*u_f^2$. 
\end{Theorem}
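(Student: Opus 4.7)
The plan is to mirror the Poisson embedding strategy of Sections~\ref{sec_poisson_embed}--\ref{sec_det_of_harmonic_ftions}, with the boundary data space $C_c^\infty(\Gamma)$ replaced by the source data space $C_c^\infty(W_j)$. For $j=1,2$ define the modified Poisson embedding $P_j^S: M_j^{\mathrm{int}} \to \mathcal{D}'(W_j)$ by $P_j^S(x)f = u_f^j(x)$ where $u_f^j$ solves $L_j u = f$ in $M_j$ with zero Dirichlet data. To compare across $\phi$, also introduce $\widetilde{P}_1(x)f := u_{\phi^*f}^1(x)$ for $f\in C_c^\infty(W_2)$, $x\in M_1^{\mathrm{int}}$; the hypothesis $S_1^L \phi^* f = \phi^* S_2^L f$ is then precisely the statement $\widetilde{P}_1(x) = P_2^S(\phi(x))$ for $x \in \Wi_1$. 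Thus $J := (P_2^S)^{-1}\circ \widetilde{P}_1$ is well-defined on $\Wi_1$ and equals $\phi$ there; the goal is to extend $J$ to all of $\Mi_1$.

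Next I would establish the Runge approximation needed to make $P_j^S$ an embedding. Since $L_j$ has real analytic coefficients, its formal adjoint $L_j^*$ satisfies weak unique continuation, and a standard Lax--Malgrange duality argument (along the lines of Appendix~\ref{runge_apprx_sec}) shows that the restrictions $\{u_f^j|_\Omega : f\in C_c^\infty(W_j)\}$ are dense in $L_j$-solutions on any relatively compact open $\Omega \subset \Mi_j \setminus \overline{W_j}$. Consequently, for any $x \in \Mi_j \setminus \overline{W_j}$, any covector, and any symmetric Hessian compatible with the equation can be realized as $u_f^j(x)$, $du_f^j(x)$, $\mathrm{Hess}\,u_f^j(x)$ for some $f\in C_c^\infty(W_j)$. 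This yields the analogs of Propositions~\ref{P_basics} and \ref{P_is_embedding}: $P_j^S$ and $\widetilde{P}_1$ are $C^\infty$ embeddings in the Fr\'echet sense.

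Now set $B$ to be the maximal open connected subset of $\Mi_1$ containing $\Wi_1$ on which $\widetilde{P}_1(x) \in P_2^S(\Mi_2)$, and let $J = (P_2^S)^{-1}\circ \widetilde{P}_1 : B \to \Mi_2$. By Lemma~\ref{J_is_local_dif} (adapted to the present setting), $J$ is a $C^\infty$ diffeomorphism onto its image and agrees with $\phi$ on $\Wi_1$. The claim is that $B = \Mi_1$. Suppose not, and pick $x_1 \in \partial B \cap \Mi_1$, a sequence $p_k \to x_1$ in $B$, and (by compactness of $M_2$) a limit $x_2 \in M_2$ of $J(p_k)$. Ruling out $x_2 \in \partial M_2$ uses that $u_f^2|_{\partial M_2}=0$ while Runge gives some $f$ with $u_{\phi^*f}^1(x_1) \neq 0$, so $x_2 \in \Mi_2$. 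Using Runge at $x_2$, pick $f_1,\ldots,f_n \in C_c^\infty(W_2)$ with $\{du_{f_k}^2(x_2)\}$ linearly independent, set $U = (u_{f_1}^2,\ldots,u_{f_n}^2)$ and $V = (u_{\phi^*f_1}^1,\ldots,u_{\phi^*f_n}^1)$; as in the proof of Theorem~\ref{global_determination_of_harmonic_functions}, $V$ is a local coordinate chart near $x_1$, $U$ is a chart near $x_2$, and on $B \cap \Omega_1$ one has $J = U^{-1}\circ V$. On $V(B \cap \Omega_1)$ the coordinate representations $u_{\phi^*f}^1\circ V^{-1}$ and $u_f^2 \circ U^{-1}$ coincide, and both are real analytic on the set where the source vanishes, so by analytic continuation they agree on the full $V(\Omega_1)$. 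This forces $\widetilde{P}_1(\Omega_1) \subset P_2^S(\Mi_2)$, contradicting the maximality of $B$. Inverting the roles of $M_1$ and $M_2$ yields surjectivity, and smoothness of $J$ comes from the harmonic-coordinate formula $J = U^{-1}\circ V$; its real analyticity on $\Mi_1$ follows since $u_f^j$ are real analytic on the complement of the source.

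The main obstacle is the extension step across the boundary of $W_j$ within $\Mi_j$: solutions $u_f^j$ are real analytic in $\Mi_j \setminus \overline{W_j}$ but only smooth near $W_j$, so the analytic continuation argument that powers the extension of $B$ must be arranged to act only in the source-free region. This is handled by choosing the harmonic-type coordinate point $x_1$ outside $\overline{W_1}$ when applying analytic continuation, and relying on the already established identity $J=\phi$ on $\Wi_1$ to glue by continuity with the extension over $\Mi_1\setminus\overline{W_1}$; points $x_1\in \partial W_1 \cap \Mi_1$ are reached by combining continuity of $J$ with the injectivity of $\widetilde P_1$ and $P_2^S$ at such points. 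A second technical point is that the Runge approximation requires the zero boundary condition to be incorporated into the duality setup, unlike the boundary-data Runge result used for harmonic functions; this is the content one needs from the appendix in this source-to-solution setting.
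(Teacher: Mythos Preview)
Your proposal is correct and follows essentially the same route as the paper: define a source-data Poisson embedding $R_j$ (your $P_j^S$), show it is an embedding via Runge approximation for the source problem (Proposition~\ref{prop_runge_consequence_interior_source_sec6}), set $J=(\phi^*R_2)^{-1}\circ R_1$, and run the open--closed argument of Theorem~\ref{global_determination_of_harmonic_functions} using solution coordinates (Lemma~\ref{solution_coords}) and real-analytic continuation.

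One point worth tightening is your discussion of the ``main obstacle''. The paper handles this more simply than you suggest: after first shrinking $W_j$ to small geodesic balls in $\Mi_j$, one has $\Wi_1\subset B$, so any $x_1\in\partial B\setminus B$ automatically lies in $\Mi_1\setminus \Wi_1$; since every source $f\circ\phi$ has compact support in $W_1$, it vanishes on a neighborhood of $x_1$, and likewise one checks $x_2\notin \Wi_2$. There is thus no separate ``gluing by continuity'' or special treatment of $x_1\in\partial W_1$ needed. The paper also performs the analytic continuation in two steps (first to a small ball $V(\Omega_1')$ around $x_1$, then to $V(\Omega_1\setminus\overline B)$, using that the supports of all sources lie in $B$), which is what your phrase ``agree on the full $V(\Omega_1)$'' needs to unpack.
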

In the theorem $u_{f\circ\phi}^1$ and $u_f^2$ are the solutions to $L_1u_{f\circ\phi}^1=f\circ\phi$ in $M_1$ and to $L_2u_{f}^2=f$ in $M_2$ with $u_{f\circ\phi}^1|_{\p M}=u_{f}^2|_{\p M}=0$.

We prove the theorem by modifying the Poisson embedding technique to suit the study of source-to-solution map instead of the DN map. The arguments are very similar to those that we have used in the previous sections. We keep the exposition short.

\begin{Definition}[Poisson embedding for $L$]\label{Poisson_embed_source_map}
 Let $(M,g)$ be a compact Riemannian manifold with boundary, and let $W$ be an open subset of $M$. Let $L$ be a second order elliptic differential operator of the form~\eqref{L_form} which is injective on $C^{\infty}(M) \cap H^1_0(M)$. The \emph{Poisson embedding} $R$ of the manifold $M$ is defined to be the mapping
 \[
 R:M^{\mathrm{int}}\to \mathcal{D}'(W)
 \]
 such that $R(x)f=u_f(x)$, where $u_f$ solves the Poisson problem
 \begin{align*}
 Lu_f&=f \mbox{ in } M, \\
 u_f&=0 \mbox{ on } \p M.
 \end{align*}
\end{Definition}
In the definition $\mathcal{D}'(W)$ means $[C_c^\infty(W)]^*$ as usual.
The reason why we consider $R$ to be defined only in the interior of $M$ is because we assume that the boundary values of the solutions $u_f$ vanish on the boundary. Thus we have no control on the points on the boundary by using solutions $u_f$. Even though $R$ is not the Poisson embedding of the previous section, we use the same name for $R$, and we note that $R$ is related to the linear elliptic operator $L$.

The basic properties of the Poisson embedding $R$ are as follows. The proof of the following proposition is in Appendix~\ref{proofs_of_smoothness}.
\begin{Proposition}\label{R_basics}
 Let $(M,g)$ be smooth compact manifold with boundary. For any $x \in \Mi$, one has $R(x) \in H^{-s}(W)$ whenever $s+2>n/2$. The mapping $R$ is continuous $\Mi \to H^{-s-1}(W)$ and $k$ times Fr\'echet differentiable considered as a mapping $\Mi \to H^{-s-1-k}(W)$. In particular, 
 $R:\Mi \to \mathcal{D}'(W)$ is $C^\infty$ smooth in the Fr\'echet sense. The mapping $R$ can be extended continuously to a mapping $M\to H^{-s}(W)$ by defining $R|_{\p M}=0$.
 
 The Fr\'echet derivative of $R$ at $x\in\Mi$ is a linear mapping given by
 \[
 DR_x:T_x\Mi\to \mathcal{D}'(W), \quad (DR_xV)f=du_f(x)\cdot V,
 \]
 where $u_f$ solves $Lu_f=f$ in $M$, $u_f|_{\p M}=0$, $f\in C_c^{\infty}(W)$, and $\cdot$ refers to the canonical pairing of vectors and covectors on $M$.
\end{Proposition}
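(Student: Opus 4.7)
The proof parallels the one for Proposition~\ref{P_basics} (placed in Appendix~\ref{sec_appendix}); the only substantive change is that $u_f$ now solves the inhomogeneous problem $Lu_f=f$ with $u_f|_{\p M}=0$ rather than a homogeneous equation with inhomogeneous Dirichlet data. The plan is to combine standard elliptic regularity for $L$ with Sobolev embedding, then read off the differentiability and the derivative formula from Taylor expansions in $x$.

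First I would fix $s\ge 0$ and $x\in\Mi$ and use the hypothesis that $L$ is injective on $C^\infty(M)\cap H^1_0(M)$, together with standard elliptic theory for the Dirichlet problem, to obtain a bounded solution operator $H^s(M)\to H^{s+2}(M)\cap H^1_0(M)$, $f\mapsto u_f$, for every $s\ge 0$. Extending $f\in C_c^\infty(W)$ by zero to $M$ and invoking the Sobolev embedding $H^{s+2}(M)\hookrightarrow C(\overline M)$, which holds once $s+2>n/2$, pointwise evaluation at $x$ gives
\[
\abs{u_f(x)}\le C\norm{u_f}_{H^{s+2}(M)}\le C'\norm{f}_{H^s(W)},
\]
so $R(x)$ lies in the dual $H^{-s}(W)=[H^s_0(W)]^*$ in the stated range.

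Next, to promote this to continuity and $k$-fold Fr\'echet differentiability, I would upgrade the pointwise estimate to a $C^k$ estimate: on any compact $K\subset\Mi$ the embedding $H^{s+2}(M)\hookrightarrow C^k(K)$ (valid once $s+2-k>n/2$) yields uniform bounds $\sum_{\abs{\alpha}\le k}\abs{\p_x^\alpha u_f(x)}\le C_{K,k}\norm{f}_{H^s(W)}$ for $x\in K$. Each $x$-derivative of $u_f$ costs one Sobolev order on $M$ and hence, by duality, one Sobolev order on $W$, which accounts precisely for the index shift from $H^{-s}$ to $H^{-s-1-k}$ claimed in the statement. Taylor expanding $u_f$ in local coordinates around $x$ and controlling the remainder by the above estimate, one sees that $R$ is $k$ times Fr\'echet differentiable as a mapping $\Mi\to H^{-s-1-k}(W)$, with derivative given by the formula obtained formally from $\frac{d}{dt}\big|_{t=0}u_f(\gamma(t))=du_f(x)\cdot V$. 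Since $k$ is arbitrary, $R$ is $C^\infty$ into $\mathcal{D}'(W)$.

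Finally, for the continuous extension to the boundary, since $u_f\in C(\overline M)$ with $u_f|_{\p M}=0$, the embedding $H^{s+2}(M)\hookrightarrow C^{0,\alpha}(\overline M)$ yields equicontinuity of the family $\{u_f:\norm{f}_{H^s(W)}\le 1\}$ on $\overline M$. Hence $u_f(x_j)\to 0$ uniformly in such $f$ whenever $x_j\to x_0\in\p M$, which gives $R(x_j)\to 0$ in $H^{-s}(W)$, so setting $R|_{\p M}=0$ produces the desired continuous extension $M\to H^{-s}(W)$. The main item requiring care is the bookkeeping of the Sobolev indices; beyond this, no new idea is needed relative to the argument sketched for Proposition~\ref{P_basics}, and Runge approximation plays no role at this stage.
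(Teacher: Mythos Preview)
Your proposal is correct and follows essentially the same approach as the paper's own proof: elliptic regularity for the Dirichlet problem combined with Sobolev embedding to get $R(x)\in H^{-s}(W)$, then Taylor expansion along curves with the same index bookkeeping for continuity and higher Fr\'echet derivatives. The only minor variation is your use of the H\"older embedding and equicontinuity for the boundary extension, which actually gives the stated target $H^{-s}(W)$ cleanly, whereas the paper reuses the mean-value continuity calculation (landing naturally in $H^{-s-1}(W)$); both are adequate for the purposes of the paper.
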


In the statement, we are not claiming that the continuation of $R$ onto $M$ is injective on $\p M$.

To prove that $R$ is an embedding, we use the following analogue of Proposition \ref{prop_runge_consequence_first} which follows from a suitable Runge approximation result. Its proof is in Appendix~\ref{runge_apprx_sec}.

\begin{Proposition} \label{prop_runge_consequence_interior_source_sec6}
Let $(M,g)$ be a compact manifold with boundary, and let $L$ be a second order uniformly elliptic differential operator on $M$ which is injective on $C^{\infty}(M) \cap H^1_0(M)$. Let $W$ be a nonempty open subset of $M$, and denote by $u_f$ the solution of $Lu = f$ in $M$ with $u|_{\partial M} = 0$.
\begin{enumerate}
\item[(a)] 
If $x \in \Mi$, $y \in M$ and $x \neq y$, there is $f \in C^{\infty}_c(W)$ such that 
\[
u_f(x) \neq u_f(y).
\]
\item[(b)] 
If $x \in \Mi$ and $v \in T_x^* M$, there is $f \in C^{\infty}_c(W)$ such that 
\[
du_f(x) = v.
\]
\end{enumerate}
\end{Proposition}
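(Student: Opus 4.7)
The plan is to dualize via the Dirichlet Green's function of $L$ and reduce both assertions to weak unique continuation for the formal adjoint $L^*$. Since $L$ is uniformly elliptic and injective on $C^\infty(M) \cap H^1_0(M)$, I can construct the Dirichlet Green's function $G_L(z,w)$ on $M$: it will satisfy $L_w^* G_L(z,\cdot) = \delta_z$ on $\Mi$ with vanishing trace on $\p M$, and the classical parametrix construction will give the diagonal asymptotics $G_L(z,w) \sim c(z) |z-w|^{2-n}$ for $n \geq 3$ (respectively $c(z) \log |z-w|$ for $n = 2$) in local coordinates, with the expected behavior for $z$-derivatives. Every $u_f$ with $f \in C^\infty_c(W)$ will then admit the representation
\[
u_f(z) \;=\; \int_M G_L(z,w)\, f(w)\, dV_g(w).
\]

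For part (a), I argue by contradiction: if $u_f(x) = u_f(y)$ for every $f \in C^\infty_c(W)$, testing against $f$ forces the distributional identity $G_L(x,\cdot)|_W = G_L(y,\cdot)|_W$. If at least one of $x, y$ lies in $W$, one side of this identity carries a nontrivial pointwise singularity inside $W$ that the other side either lacks or places at a distinct point, an immediate contradiction. Otherwise $\{x,y\} \cap W = \emptyset$, the difference $G_L(x,\cdot) - G_L(y,\cdot)$ is smooth on $W$, vanishes there, and solves $L_w^* v = 0$ on $\Mi \setminus \{x,y\}$. Weak unique continuation for $L^*$ then propagates the vanishing to the connected component of this set containing $W$, which in dimension $n \geq 2$ contains a punctured neighborhood of $x$; the isolated singularity of $G_L(x,\cdot)$ at $x$ cannot be matched by the smoothness of $G_L(y,\cdot)$ there, a contradiction.

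For part (b), I first observe that the map $f \mapsto du_f(x): C^\infty_c(W) \to T_x^* M$ fails to be surjective if and only if there is $V \in T_xM \setminus \{0\}$ annihilating its image, i.e.\ $du_f(x) \cdot V = 0$ for every $f$. Differentiating the Green's function identity in $z$ turns this into the distributional statement
\[
h(w) \;:=\; V^a(\partial_{z^a} G_L)(x,w) \;=\; 0 \quad \text{for } w \in W.
\]
The function $h$ satisfies $L_w^* h = 0$ on $\Mi \setminus \{x\}$, and the same unique continuation argument forces $h \equiv 0$ on a punctured neighborhood of $x$. On the other hand, the parametrix at $x$ gives, in local coordinates, $V^a \partial_{z^a} G_L(x,w) \sim c\, V^a(x^a - w^a)\,|x-w|^{-n}$ as $w \to x$ (with a logarithmic analogue for $n = 2$), which does not vanish in any punctured neighborhood of $x$ when $V \neq 0$; a contradiction.

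The two technical ingredients are weak unique continuation for the adjoint $L^*$, which is standard for smooth uniformly elliptic second order operators, and the near-diagonal behavior of the Dirichlet Green's function, which follows from the classical parametrix construction. The main obstacle I anticipate is keeping the proof fully self-contained given these standard-but-technical appeals; a purely Runge-approximation-plus-Hahn-Banach formulation (which the appendix presumably adopts, identifying the dual of $\{u_f : f \in C_c^\infty(W)\}$ with distributions $v$ supported outside $W$ satisfying $L^* v = 0$ with vanishing trace) trades them for cleaner abstract duality, but the two approaches are essentially the same argument packaged differently.
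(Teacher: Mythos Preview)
Your argument is correct but takes a genuinely different route from the paper. The paper proceeds via the Runge approximation result of Proposition~\ref{prop_runge_approximation_cauchy_source}: for part (a) it approximates, in $H^s$ on a union of two disjoint balls around $x$ and $y$, a locally piecewise-constant function by global solutions $u_f$; for part (b) it invokes the existence of local solutions to $Lu=0$ with prescribed $0$- and $1$-jet at a point (cited from \cite{BJS}), approximates such a local solution by global $u_f$, and uses finite-dimensionality of $T_x^*M$ to pass from density to surjectivity. You instead dualize directly through the Dirichlet Green's function, reduce both assertions to the vanishing on $W$ of an $L^*$-solution (namely $G_L(x,\cdot)-G_L(y,\cdot)$ or $V^a\partial_{z^a}G_L(x,\cdot)$), propagate that vanishing by unique continuation, and contradict the known diagonal singularity.

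Both proofs ultimately rest on unique continuation for $L^*$; the difference is packaging. The paper's route recycles the Runge machinery already built in the appendix and needs no parametrix asymptotics, at the cost of the external citation for prescribed local jets. Your route is more self-contained at the level of ideas and avoids the approximation step entirely, but trades it for the near-diagonal behavior of the Green's function, which is equally ``standard but technical.'' Two small points to tighten: when $y\in\partial M$ you are implicitly using $u_f(y)=0$, i.e.\ $G_L(y,\cdot)\equiv 0$, which deserves a sentence; and the connectedness of $\Mi\setminus\{x,y\}$ relies on $\dim M\geq 2$ and $M$ connected, both standing hypotheses here but worth noting.
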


\begin{Proposition}[$R$ is an embedding]\label{R_is_embedding}
  Let $M$, $L$, $R$ be as in Definition \ref{Poisson_embed_source_map}. The mapping $R:\Mi\to \mathcal{D}'(W)$ is a $C^\infty$ embedding (and a $C^k$ embedding $\Mi\to H^{-s-1-k}(W)$) in the sense that it is injective with injective Fr\'echet differential on $T\Mi$.
\end{Proposition}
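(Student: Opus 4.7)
The plan is to imitate the proof of Proposition \ref{P_is_embedding} almost verbatim, substituting the Runge approximation result for the source-to-solution setting (Proposition \ref{prop_runge_consequence_interior_source_sec6}) in place of the one for the Dirichlet problem (Proposition \ref{prop_runge_consequence_first}). The smoothness of $R$ is already contained in Proposition \ref{R_basics}, so only the two injectivity statements remain to be proved.

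For injectivity of $R$, suppose $x_1, x_2 \in \Mi$ satisfy $R(x_1) = R(x_2)$, so that $u_f(x_1) = u_f(x_2)$ for every $f \in C_c^\infty(W)$. If $x_1 \neq x_2$, Proposition \ref{prop_runge_consequence_interior_source_sec6}(a) produces some $f_0 \in C_c^\infty(W)$ with $u_{f_0}(x_1) \neq u_{f_0}(x_2)$, a contradiction; hence $x_1 = x_2$.

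For injectivity of the Fr\'echet derivative, let $x \in \Mi$ and $V \in T_x \Mi$ with $DR_x V = 0$. The formula in Proposition \ref{R_basics} gives $du_f(x)\cdot V = 0$ for every $f \in C_c^\infty(W)$. By Proposition \ref{prop_runge_consequence_interior_source_sec6}(b), applied to the covector $v = V^{\flat} \in T_x^*M$ dual to $V$ via $g$, there is $f \in C_c^\infty(W)$ with $du_f(x) = V^{\flat}$, equivalently $\nabla u_f(x) = V$. Then
\[
0 = du_f(x)\cdot V = |V|^2_{g(x)},
\]
so $V = 0$. The argument in the Sobolev-space statement is identical since the derivative formula is unchanged.

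There is no real obstacle here; the only point requiring any care is confirming that Proposition \ref{prop_runge_consequence_interior_source_sec6} is applicable. Its hypotheses (uniform ellipticity of $L$ and injectivity on $C^\infty(M)\cap H^1_0(M)$) are exactly those built into Definition \ref{Poisson_embed_source_map}, so both parts (a) and (b) can be invoked freely. Thus $R$ is an injective $C^\infty$ map with injective differential, i.e., an embedding in the stated sense.
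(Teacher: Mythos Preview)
Your proof is correct and follows essentially the same route as the paper: both invoke Proposition~\ref{prop_runge_consequence_interior_source_sec6}(a) for injectivity of $R$ and part~(b) for injectivity of $DR_x$, exactly as Proposition~\ref{P_is_embedding} was proved for $P$. The paper's version is slightly terser (it just notes one can find $f$ with $du_f(x)\cdot V\neq 0$ unless $V=0$, rather than explicitly picking $du_f(x)=V^\flat$), but the argument is the same.
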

\begin{proof}
 The injectivity of $R$ follows from Proposition \ref{prop_runge_consequence_interior_source_sec6}(a). Let $x\in \Mi$ and $V\in T_xM$. Assume that $0=(DR_xV)f=du_f(x)\cdot V$ for all $f\in C_c^\infty(W)$. By Proposition \ref{prop_runge_consequence_interior_source_sec6}(b) one can find $f\in C_c^\infty(W)$ so that $du_f(x)\cdot V\neq 0$, unless $V=0$. This shows injectivity of the differential.
\end{proof}
We construct next local coordinate systems from solutions $u_f$ to $Lu_f=f$, $u_f|_{\p M}=0$. We call these coordinates \emph{solution coordinates}. These coordinates are constructed by Runge approximating local solutions to $Lu=0$. 
\begin{Lemma}[Solution coordinates]\label{solution_coords}
 Let $(M,g)$ be a compact Riemannian manifold with boundary. Let $W$ be an open subset of $M$ and let $x_0\in \Mi$. Then there is $C^\infty$ coordinate system on a neighborhood $\Omega$ of $x_0$ of the form $(u_{f_1},\ldots,u_{f_n})$ where each of the coordinate functions satisfies 
 \[
 Lu_{f_j}=f_j \mbox{ in } M, \quad u_{f_j}=0 \mbox{ on } \p M, 
 \]
 with $f_j\in C_c^\infty(W)$, and where 
 \[
 f_j=0 \text{ on } \Omega, \quad j=1,\ldots,n.
 \]
 If the coefficients of $L$ are real analytic, then $(u_{f_1},\ldots,u_{f_n})$ is real analytic on $\Omega$.
\end{Lemma}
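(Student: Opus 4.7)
The plan is to apply Proposition~\ref{prop_runge_consequence_interior_source_sec6}(b) to produce $n$ solutions with linearly independent differentials at $x_0$, and then invoke the inverse function theorem. The only subtlety is that we also need the sources $f_j$ to vanish on a neighborhood of $x_0$, which we arrange by applying the Runge consequence not to $W$ itself but to a slightly shrunk version of $W$ that avoids $x_0$.

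First I would choose a basis $\{v_1,\ldots,v_n\}$ of $T_{x_0}^*M$ and a small open neighborhood $\Omega_0$ of $x_0$ with $\overline{\Omega_0}\subset \Mi$. The key point is to pick $\Omega_0$ small enough that the set $W':=W\setminus \overline{\Omega_0}$ is still a nonempty open subset of $M$. This is always possible: if $x_0\notin \overline{W}$ we may take $\Omega_0$ disjoint from $W$, and otherwise, since $W$ is a nonempty open set, any sufficiently small ball $\Omega_0$ around $x_0$ will leave at least an open annular piece of $W$ outside $\overline{\Omega_0}$. Applying Proposition~\ref{prop_runge_consequence_interior_source_sec6}(b) with the source set $W'$ then produces functions $f_j\in C_c^\infty(W')\subset C_c^\infty(W)$ such that the associated solutions $u_{f_j}$ of $Lu_{f_j}=f_j$ in $M$, $u_{f_j}|_{\p M}=0$, satisfy $du_{f_j}(x_0)=v_j$. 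By the choice of $W'$ we automatically have $f_j=0$ on $\Omega_0$.

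Next, since $\{du_{f_1}(x_0),\ldots,du_{f_n}(x_0)\}$ is a basis of $T_{x_0}^*M$, the differential of the map $(u_{f_1},\ldots,u_{f_n}):M\to\mathbb{R}^n$ is invertible at $x_0$. The inverse function theorem then gives a neighborhood $\Omega\subset \Omega_0$ of $x_0$ on which this map is a $C^\infty$ diffeomorphism onto its image. Since $\Omega\subset \Omega_0$, the sources $f_j$ vanish on $\Omega$, which gives the desired local coordinates with the stated source property. For the real-analytic statement, note that on $\Omega$ each $u_{f_j}$ satisfies the homogeneous equation $Lu_{f_j}=0$; when the coefficients of $L$ are real analytic, interior elliptic regularity for second-order uniformly elliptic operators with real-analytic coefficients (Morrey--Nirenberg) yields that $u_{f_j}\in C^\omega(\Omega)$.

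The only place where care is needed is the arrangement of the supports: one must not apply the Runge consequence directly to $W$ when $x_0\in \overline{W}$, since then nothing prevents the sources from accumulating near $x_0$. Replacing $W$ by $W\setminus\overline{\Omega_0}$ in advance, which is legitimate because Proposition~\ref{prop_runge_consequence_interior_source_sec6} holds for arbitrary nonempty open source sets, resolves this. All remaining steps (choosing a basis, invoking the inverse function theorem, and invoking analytic elliptic regularity) are standard.
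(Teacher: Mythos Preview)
Your proof is correct and follows essentially the same approach as the paper: both shrink the source set to avoid $x_0$, apply Proposition~\ref{prop_runge_consequence_interior_source_sec6}(b) to obtain solutions with linearly independent differentials at $x_0$, invoke the inverse function theorem, and then use elliptic regularity for the real-analytic statement. Your version is slightly more explicit about how the supports are arranged (choosing $\Omega_0$ first and setting $W'=W\setminus\overline{\Omega_0}$), whereas the paper simply redefines $W$ to exclude $x_0$ and shrinks $\Omega$ afterward, but the content is the same.
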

\begin{proof}
 Let $x_0\in \Mi$. If $x_0\in W$, we redefine $W$ as a smaller open set such that $x_0\notin W$. By Proposition \ref{prop_runge_consequence_interior_source_sec6}(b) we may find $f_1, \ldots, f_n \in C^{\infty}_c(W)$ such that the Jacobian matrix of $U = (u_{f_1},\ldots,u_{f_n})$ is the identity matrix at $x_0$. Thus $U$ is a coordinate system in some neighborhood $\Omega$ of $x_0$, and by shrinking $\Omega$ if necessary we may assume that $f_j = 0$ in $\Omega$.
 
 If the coefficients of $L$ are real analytic, the coordinate system $U$ is real analytic on $\Omega$ by elliptic regularity, since $f_j|_\Omega=0$.
 \end{proof}

The next lemma is an analogue of Lemma~\ref{J_is_local_dif}, where we considered the composition of Poisson embeddings.
If $W_1$ and $W_2$ are open subsets of $M_1$ and $M_2$ and $\phi: W_1\to W_2$ is a diffeomorphism, we define a mapping $\phi^*R_2:\Mi_2\to \mathcal{D}'(W_1)$ by
\begin{equation} \label{phir2_definition}
 [\phi^*R_2](x)f=R_2(x)(f\circ\phi^{-1}), 
\end{equation}
 with $x\in \Mi_2$ and $f\in C_c^\infty(W_1)$. From Proposition~\ref{R_basics} and Proposition~\ref{R_is_embedding}, it follows that $\phi^*R_2$ is a $C^k$ embedding $\Mi_2 \to H^{-s-1-k}(W)$, $k=0,1,\ldots$.

\begin{Lemma}\label{R_composition}
 Let $(M_1,g_1)$ and $(M_2,g_2)$ be compact manifolds with mutual boundary $\p M$. Let $W_j\subset M_j$, $j=1,2$, be open subsets and let $\phi$ be a diffeomorphism $W_1\to W_2$. 
  Assume that for some open set $B\subset \Mi_1$
  \[
 R_1(B) \subset (\phi^*R_2)(\Mi_2).
 \]
 Then
 \[
 J=(\phi^*R_2)^{-1}\circ R_1,
 \]
 is $C^\infty$ diffeomorphism $B\to J(B)\subset \Mi_2$.
\end{Lemma}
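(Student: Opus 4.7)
My plan is to mirror the argument of Lemma \ref{J_is_local_dif}, adapted to the modified Poisson embedding $R$ and the composition through the diffeomorphism $\phi$. Both $R_1 : \Mi_1 \to \mathcal{D}'(W_1)$ and $\phi^* R_2 : \Mi_2 \to \mathcal{D}'(W_1)$ are embeddings by Proposition \ref{R_is_embedding}, so $J = (\phi^* R_2)^{-1} \circ R_1$ is well defined and injective on $B$. Unwinding \eqref{phir2_definition}, the identity $R_1(x) = (\phi^*R_2)(J(x))$ is equivalent to
\[
u^1_{g \circ \phi}(x) = u^2_g(J(x)), \quad x \in B,\ g \in C_c^\infty(W_2),
\]
which will be the main computational tool. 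The three substeps are continuity of $J$ on $B$, upgrading continuity to $C^\infty$, and invertibility of $DJ$ combined with the inverse function theorem.

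For continuity I will argue by contradiction: suppose $x_k \to x$ in $B$ but $J(x_k) \not\to J(x)$, extract by compactness of $M_2$ a subsequence with $J(x_k) \to y \in M_2$ and $y \neq J(x)$, and rule out both $y \in \Mi_2$ and $y \in \p M$. If $y \in \Mi_2$, continuity of $\phi^* R_2$ into $H^{-s-1}(W_1)$ (Proposition \ref{R_basics}) gives $(\phi^*R_2)(J(x_k)) \to (\phi^*R_2)(y)$, while the same sequence equals $R_1(x_k) \to R_1(x) = (\phi^*R_2)(J(x))$; injectivity of $\phi^*R_2$ then forces $y = J(x)$, a contradiction. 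If instead $y \in \p M$, the continuous boundary extension $R_2|_{\p M} = 0$ from Proposition \ref{R_basics} forces $(\phi^*R_2)(J(x_k)) \to 0$, but this limit also equals $R_1(x)\neq 0$; non-vanishing follows by choosing via Proposition \ref{prop_runge_consequence_interior_source_sec6}(a) some $f \in C_c^\infty(W_1)$ with $u^1_f(x) \neq u^1_f(y') = 0$ for a boundary point $y' \in \p M_1$. I expect this boundary case to be the main subtlety: unlike in Lemma \ref{J_is_local_dif}, the solutions defining $R$ already vanish on $\p M$, so excluding $y \in \p M$ relies precisely on the continuous boundary extension recorded in Proposition \ref{R_basics}.

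For smoothness I fix $x_0 \in B$ and invoke Lemma \ref{solution_coords} on $(M_2, g_2)$ to select $f_1,\ldots,f_n \in C_c^\infty(W_2)$, vanishing near $J(x_0)$, for which $U = (u^2_{f_1}, \ldots, u^2_{f_n})$ is a $C^\infty$ coordinate system on some neighborhood $\Omega_2 \subset \Mi_2$ of $J(x_0)$. Setting $V = (u^1_{f_1 \circ \phi}, \ldots, u^1_{f_n \circ \phi})$, the fundamental identity gives $V = U \circ J$ on $B$. By the continuity just proved, some neighborhood $\Omega \subset B$ of $x_0$ satisfies $J(\Omega) \subset \Omega_2$, yielding the local representation $J = U^{-1} \circ V$ on $\Omega$, which is manifestly $C^\infty$. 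For invertibility of $DJ(x)$, I differentiate $R_1 = (\phi^*R_2) \circ J$ and use the formula for the Fr\'echet derivative in Proposition \ref{R_basics} to obtain the equation of injectivity
\[
du^1_f(x) \cdot X = du^2_{f \circ \phi^{-1}}(J(x)) \cdot DJ(x) X
\]
for all $f \in C_c^\infty(W_1)$ and $X \in T_x M_1$. If $DJ(x) X = 0$, Proposition \ref{prop_runge_consequence_interior_source_sec6}(b) supplies $f$ with $\nabla u^1_f(x) = X$, forcing $|X|_{g_1}^2 = 0$. Hence $DJ(x)$ is injective, and since $\dim M_1 = \dim M_2$ it is an isomorphism; together with the global injectivity of $J$ on $B$, the inverse function theorem produces the $C^\infty$ diffeomorphism asserted.
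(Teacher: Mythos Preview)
Your proof is correct and follows the same overall structure as the paper's: continuity, then local $C^\infty$ representation via solution coordinates, then invertibility of $DJ$ through the equation of injectivity and Proposition~\ref{prop_runge_consequence_interior_source_sec6}(b). The only genuine difference is in the continuity step: you argue sequentially by contradiction (in the spirit of Lemma~\ref{J_is_local_dif}), splitting into the cases $y\in\Mi_2$ and $y\in\partial M$ and using the continuous extension $R_2|_{\partial M}=0$ to rule out the latter, whereas the paper takes a topological route, observing that the extended map $\phi^*R_2:M_2\to H^{-s-1}(W_1)$ sends compact sets to closed sets so that $(\phi^*R_2)^{-1}$ is continuous on its image. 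Both arguments hinge on exactly the same two ingredients from Proposition~\ref{R_basics} (continuity of $R_j$ and the boundary extension by zero), and your sequential version is perhaps more transparent since it makes explicit why $R_1(x)\neq 0$ for $x\in\Mi_1$, a fact the paper's phrasing uses implicitly.
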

\begin{proof}
We prove the continuity of $J$ differently than we did in the corresponding situation in Lemma~\eqref{J_is_local_dif}. It follows from Proposition~\ref{R_basics} that we may continue $\phi^*R_2$ by zero to a continuous mapping $M_2\to H^{-s-1}(W_1)$. Let $E\subset \Mi_2$ be closed in $M_2$. It follows that $\phi^*R_2(E)$ is closed in $H^{-s-1}(W_1)$ by continuity and by compactness of $M_2$. Since $\phi^*R_2$ is injective on $\Mi_2$, we have that $I:=(\phi^*R_2)^{-1}$ is defined as a mapping $\phi^*R_2(\Mi_2)\to \Mi_2$ and we have that $I^{-1}(E)$ equals $\phi^*R_2(E)$, which is closed. Thus $I$ is continuous and consequently $J$ is continuous $B\to J(B)$ by Proposition~\ref{R_basics}.

Let $x_0\in \Mi_1$ and let $U=(u^2_{f_1},\ldots, u^2_{f_n})$ be solution coordinates of Lemma~\ref{solution_coords} on a neighborhood $\Omega_2$ of $J(x_0)$, where $f_k\in C_c^\infty(W_2)$, $k=1,\ldots,n$. In case $x_0\in W_1$, we may assume that $\phi(x_0)\notin \overline{\textrm{supp}(f_k)}$. We define $V:=J^*U$. Since $U$ is invertible, we have that $J=U^{-1}\circ V$. Since $J$ is continuous, the domain $\Omega_1:=J^{-1}(\Omega_2)$ of $V$ is an open neighborhood of $x_0$.  
We have $V=(v_{f_1}, \ldots, v_{f_n})$ where $v_{f_k}$ satisfies
\begin{align*}
 v_{f_k}(x)&=J^*u^2_{f_k}(x)=J^*(R_2(x)f_k)=R_2(J(x))((f_k\circ \phi)\circ \phi^{-1}) \\
 &=[\phi^*R_2](J(x))(f_k\circ\phi)=R_1(x)(f_k\circ\phi)=u_{f_k\circ \phi}^1(x).
\end{align*}
Since $\phi(x_0)\notin \overline{\textrm{supp}(f_k)}$, we have that each $u_{f_k\circ \phi}^1$ is $C^\infty$ near $x_0$. Thus $V$ is $C^\infty$ and consequently $J=U^{-1}\circ V$ is $C^\infty$. If $f\in \mathcal{D}'(W_1)$, we have that
\[
 u_f^1(x)=R_1(x)f=(\phi^*R_2\circ J)(x)f=[\phi^*R_2](J(x))f=u_{f\circ\phi^{-1}}^2(J(x)),
\]
and 
\[
du^1_f(x_0)\cdot X=d(u_{f\circ\phi^{-1}}^2\circ J)(x_0)\cdot X=du_{f\circ \phi^{-1}}^2|_{J(x_0)}\cdot (DJ(x_0)) X.
\]
It follows from Proposition~\ref{prop_runge_consequence_interior_source_sec6} that $DJ(x_0)$ is invertible. Since $J$ is also injective by Proposition~\ref{R_is_embedding}, it follows that $J:B\to J(B)$ is $C^\infty$ diffeomorphism.
\end{proof}

We prove next the main result of this subsection.

\begin{proof}[Proof of Theorem~\ref{global_determination_of_harmonic_functions_for_L}]
Assume that the source-to-solutions maps agree:
\begin{equation}\label{S_agree}
S^L_1\phi^*f=\phi^*S^L_2f, \quad f\in C_c^\infty(W_2).
\end{equation}
By shrinking $W_j$ if necessary, we may assume that $W_j$ are small geodesic balls.

Redefine $B$ to be the largest connected open set of $\Mi_1$ such that $R_1(B)\subset (\phi^*R_2)(\Mi_2)$. We have $\Wi_1 \subset B$. To see this, let $f\in C_c^\infty(W_1)$ and $x\in \Wi_1$. We have by using definitions and~\eqref{S_agree} that 
\begin{align}\label{too_many_eqs}
\nonumber R_1(x)f&=u^1_f(x)=(S^L_1f)(x)=[S^L_1(\phi^*(f\circ\phi^{-1}))](x)=[\phi^*S^L_2(f\circ\phi^{-1})](x)  \\ 
&=[S^L_2(f\circ\phi^{-1})](\phi(x))=u^2_{f\circ\phi^{-1}}(\phi(x))=R_2(\phi(x))(f\circ\phi^{-1}) \nonumber \\
&=[(\phi^*R_2)(\phi(x))]f.
\end{align}
Note that $\phi(x)\in \Mi_2$ since otherwise we have $u_f^1(x)=u_{f\circ \phi^{-1}}^2(\phi(x))=0$ for all $f\in C_c^\infty(W_1)$, which by Proposition \ref{prop_runge_consequence_interior_source_sec6} leads to a contradiction to the fact that $x_1\in \Mi_1$. We conclude that $B\neq \emptyset$. We show that $B$ is closed in $\Mi_1$ and thus $B=\Mi_1$.

On $B$ we define
\[
 J=(\phi^*R_2)^{-1}\circ R_1
\]
which is $C^\infty$ diffeomorphism $B\to J(B)$ by Lemma~\ref{R_composition}. We have for $x\in B$ and $f\in C_c^\infty(W_2)$ that $R_1(x)(f\circ\phi)=[\phi^*R_2](J(x))(f\circ\phi)$ and thus, by the definition \eqref{phir2_definition}, 
 \begin{equation}\label{sols_to_sols}
 u_{f\circ\phi}^1(x)=u_{f}^2(J(x)), \qquad x \in B.
 \end{equation}
 By~\eqref{too_many_eqs}, we directly also have that
 \begin{equation}\label{J_is_phi}
  J|_{\Wi_1}=\phi : \Wi_1 \to \Wi_2.
 \end{equation}

 To show that $B$ is closed, we argue by contradiction and let $p_k\to x_1\in \p B\setminus B$, with $p_k\in B$. By passing to a subsequence, we have $x_2:=\lim_kJ(p_k)\in M_2$. We have that $x_2\in \Mi_2$; otherwise $u_f(x_1)=0$ for all $f\in C_c^\infty(W_1)$ by~\eqref{sols_to_sols}. Let 
 \[
 U=(u_{f_1}^2,u_{f_2}^2,\ldots, u_{f_n}^2)
 \]
 be solution coordinates as in Lemma~\ref{solution_coords} on a neighborhood $\Omega_2\subset \Mi_2$ of the point $x_2$. Here $f_j\in C_c^\infty(W_2)$, $j=1,\ldots,n$, and the limit $x_2$ is found by first passing to a subsequence. 
 
 We set
 \[
 V=(u_{f_1\circ\phi}^1,u_{f_2\circ\phi}^1,\ldots, u_{f_n\circ\phi}^1).
 \]
 We have $J=U^{-1}\circ V$ on $B$ near $x_1$ by~\eqref{sols_to_sols}. We have that $V$ is a solution coordinate system on a neighborhood $\Omega_1\subset \Mi_1$ of $x_1$, and the ``equation of injectivity'' 
 \[
 du_f^1|_{x_1}\cdot X=du_{f\circ \phi^{-1}}^2|_{x_2}\cdot D(U^{-1}\circ V) X,\quad f\in C_c^\infty(W_1),
 \]
holds for $X \in T_{x_1} M_1$. These facts can be proved similarly following the argument in Theorem~\ref{global_determination_of_harmonic_functions}. We redefine the sets $\Omega_1$ and $\Omega_2$, if necessary, so that we have $V(\Omega_1)=U(\Omega_2)$.

 We next show that
 \begin{equation}\label{to_be_shown2}
 R_1(x)=(\phi^*R_2)(U^{-1}\circ V(x)), \quad x\in \Omega_1.
 \end{equation}
 This will imply
 \[
 R_1(B\cup \Omega_1)\subset (\phi^*R_2)(\Mi_2),
 \]
 which is a contradiction and proves the theorem.
 
 Let $f\in C_c^\infty(W_1)$. We claim that
  \[
   u_f^1\circ V^{-1}=u_{f\circ \phi^{-1}}^2\circ U^{-1} \mbox{ on } V(\Omega_1)\subset \R^n.
  \]
If $x_1\in \Wi_1$ there is nothing to prove since then $x_1\in \Wi_1\subset B$ and it is a contradiction to $x_1\in \p B\setminus B$. Thus we may assume $x_1\in \Mi_1\setminus\overline{\textrm{supp}(f)}$. If $x_2\in \Wi_2$, then by~\eqref{J_is_phi} and by the injectivity of $J$ and continuity of $J^{-1}$ we have that
 \[
  \Wi_1\ni \phi^{-1}(x_2)= J^{-1}(x_2)=J^{-1}\lim_k(J(p_k))=\lim_kJ^{-1}J(p_k)=x_1.
 \]
Thus we may also assume that $x_2\in \Mi_2\setminus \Wi_2\subset \Mi_2\setminus\overline{\textrm{supp}(f\circ\phi^{-1})}$. 
 It follows that there are small geodesic balls $\Omega'_1\subset \Omega_1$ and $\Omega'_2\subset \Omega_2$ of $x_1$ and $x_2$, respectively, such that $u_f^1$ and $u_{f\circ\phi^{-1}}^2$ are $C^\omega$ on $\Omega'_1$ and $\Omega'_2$, and that $V(\Omega'_1)\subset U(\Omega'_2)$. Consequently, the functions 
 \begin{equation}\label{functions_ra}
 u_f^1\circ V^{-1}, \ u_{f\circ\phi^{-1}}^2\circ U^{-1}
 \end{equation}
 are real analytic on $V(\Omega'_1)$.
 
 The functions in~\eqref{functions_ra} agree on $V(B\cap \Omega_1)$ by \eqref{sols_to_sols}. Consequently, they agree on $V(\Omega'_1)$ by real analyticity. The set $V(\Omega'_1) \cap V(\Omega_1  \setminus \overline{B})$ is open and non-empty. Note that the functions in~\eqref{functions_ra} are real analytic on $V(\Omega_1 \setminus \overline{B})$. This is because $\overline{\textrm{supp}(f)}\subset \Wi_1\subset B$ and because $y\in \overline{\textrm{supp}(f\circ \phi^{-1})}\cap \Omega_2$ implies that 
 \begin{align*}
U(y)&\in U(\Wi_2\cap \Omega_2)\subset U(J(B)\cap \Omega_2)=U(J(B\cap \Omega_1)) \\
&=U((U^{-1}\circ V)(B\cap \Omega_1))=V(B\cap \Omega_1).
 \end{align*}
By real analyticity the functions in~\eqref{functions_ra} agree on $V(\Omega_1 \setminus \overline{B})$ and consequently on $V(\Omega_1)$.                                                                                                                                                                                                                                                                                                                                                                                                                                                                                        We have~\eqref{to_be_shown2} since it is equivalent to 
  \[
u_f^1\circ V^{-1}=u_{f\circ \phi^{-1}}^2\circ U^{-1} \mbox{ on } V(\Omega_1)\subset \R^n.
 \]
Thus $B$ extends to a neighborhood of the point $x_1\in \p B\setminus B$, which gives a contradiction. Thus $B$ is closed. Since $\Mi_1$ is connected, we conclude that $B=\Mi_1$.
 
 By Lemma~\ref{R_composition}, $J$ is $C^\infty$ diffeomorphism $\Mi_1\to J(\Mi_1)$. Inverting the role of $M_1$ and $M_2$, we have $J(\Mi_1)=\Mi_2$. Since by Lemma~\ref{solution_coords} we may locally represent $J$ as $U^{-1}\circ V$ near any point, where $U$ and $V$ are $C^\omega$ solution coordinates, we have that $J$ and $J^{-1}$ are real analytic. If $u_f^2$ solves $L_2u_f^2=f$, where $f\in C_c^\infty(W_2)$ and $u_f^2|_{\p M}=0$, then~\eqref{sols_to_sols} shows that $u_{f\circ \phi}^1=J^*u_f^2$.
\end{proof}

\subsection{Local determination of the coefficients}
We determine the coefficients of a quasilinear elliptic operator on open sets where the source-to-solution mapping of the operator is known. Precisely, we prove the following:
\begin{Proposition}\label{local_det_of_coef_for_Q}
 Let $(M_1,g_1)$ and $(M_2,g_2)$ be compact connected Riemannian manifolds with mutual boundary and assume that $Q_j$, $j=1,2$, are quasilinear operators of the form~\eqref{quasilinear_op_form} having coefficients $\mathcal{A}_j$, $\mathcal{B}_j$ satisfying \eqref{elliptic_a}--\eqref{l_assumption}.
 
 Let $W_j$, $j=1,2$, be open subsets of $M_j$, and assume that there is a diffeomorphism $\phi:W_1\to W_2$ so that the source-to-solutions maps $S_j$ for $Q_j$ agree in the sense that
\[
\phi^*S_2f=S_1\phi^*f,
\]
for all $f\in C_c^\infty(W_2)$ with $\norm{f}_{C^{\alpha}(M_2)}$ sufficiently small.

Let $\widetilde{W}\subset \subset \Wi_1$ be an open set. Then there is $\delta>0$ such that for all $x\in \widetilde{W}$ and $(c,\sigma)\in T_x\widetilde{W}$, with $\abs{c} +\abs{\sigma}_{g_1}\leq \delta$, we have
\begin{equation}\label{det_of_A}
\mathcal{A}_1(x,c,\sigma)=\phi^*\mathcal{A}_2(x,c,\sigma)=:\mathcal{A}(x,c,\sigma)
\end{equation}
and 
\begin{equation}\label{det_of_B}
\mathcal{B}_1(x,c,\sigma)-\phi^*\mathcal{B}_2(x,c,\sigma)=\mathcal{A}^{ab}(x,c,\sigma)(\Gamma(g_1)_{ab}^k-\Gamma(\phi^*g_2)_{ab}^k)\sigma_k.
\end{equation}
 \end{Proposition}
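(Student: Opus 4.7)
The strategy is to first convert the hypothesis into a pointwise identity on $W_1$ relating the two operators, and then to use a Runge-type construction of solutions with prescribed $2$-jets to extract \eqref{det_of_A} and \eqref{det_of_B}.

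For small $f \in C_c^\infty(W_2)$ let $u_j$ be the small solutions furnished by Proposition~\ref{prop_nonlinear_wellposedness} to $Q_1u_1 = \phi^*f$ on $M_1$ and $Q_2u_2 = f$ on $M_2$, both vanishing on the boundary. The intertwining hypothesis $\phi^*S_2f = S_1\phi^*f$ yields $u_1 = u_2\circ\phi$ on $W_1$. Since $\phi:(W_1,\phi^*g_2)\to(W_2,g_2)$ is an isometry, pulling the equation $Q_2u_2 = f$ back by $\phi$ produces a second equation satisfied by $u_1$ on $W_1$, whose coefficients are $\phi^*\mathcal{A}_2$ and $\phi^*\mathcal{B}_2$ and whose Hessian term uses $\nabla^{\phi^*g_2}$. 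Subtracting this from $Q_1u_1 = \phi^*f$ at a point $x \in \widetilde{W}$, writing $c = u_1(x)$, $\sigma = du_1(x)$, and using the elementary identity
\[
(\nabla^{g_1})_a(\nabla^{g_1})_bu - (\nabla^{\phi^*g_2})_a(\nabla^{\phi^*g_2})_bu = \bigl(\Gamma_{ab}^k(\phi^*g_2) - \Gamma_{ab}^k(g_1)\bigr)\partial_k u
\]
to eliminate the $g_1$-Hessian, the source $\phi^*f$ cancels and I obtain the pointwise identity
\begin{equation}\label{eq_qlin_pointwise}
\bigl[\mathcal{A}_1 - \phi^*\mathcal{A}_2\bigr]^{ab}(x,c,\sigma)(\nabla^{\phi^*g_2})_a(\nabla^{\phi^*g_2})_bu_1(x) + \mathcal{A}_1^{ab}(x,c,\sigma)\bigl(\Gamma_{ab}^k(\phi^*g_2)-\Gamma_{ab}^k(g_1)\bigr)\sigma_k + \mathcal{B}_1(x,c,\sigma) - (\phi^*\mathcal{B}_2)(x,c,\sigma) = 0.
\end{equation}

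The crux of the argument is the following jet-realization claim: for any $x \in \widetilde{W}$ and any sufficiently small $(c,\sigma,H) \in \R \oplus T_x^*M_1 \oplus \mathrm{Sym}^2 T_x^*M_1$, there is a small $f \in C_c^\infty(W_2)$ whose associated small solution $u_1$ satisfies $u_1(x) = c$, $du_1(x) = \sigma$ and $(\nabla^{\phi^*g_2})^2u_1(x) = H$. I would prove this by the implicit function theorem. By Proposition~\ref{prop_nonlinear_linearization}, the Fr\'echet derivative of the nonlinear map $f \mapsto u_1$ at $f = 0$ equals the linear source-to-solution map $\tilde f \mapsto v$ for $L_1$ (the linearization of $Q_1$ at $0$), where $\tilde f = \phi^*f \in C_c^\infty(W_1)$. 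It therefore suffices to show that the linear jet map
\[
\tilde f \longmapsto \bigl(v(x), dv(x), (\nabla^{\phi^*g_2})^2v(x)\bigr), \qquad L_1 v = \tilde f,\ v|_{\partial M_1} = 0,\ \tilde f \in C_c^\infty(W_1),
\]
is surjective onto $\R \oplus T_x^*M_1 \oplus \mathrm{Sym}^2 T_x^*M_1$. This is a source-side Runge approximation statement for $L_1$ in the spirit of Appendix~\ref{runge_apprx_sec} and is the main technical step; it should follow by the standard duality and unique-continuation argument, noting that because $\tilde f(x)$ is unconstrained no trace-type obstruction on $H$ appears (unlike in Proposition~\ref{runge_prescribed_jet} where the ellipticity equation forces a trace condition).

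Granted the jet-realization lemma, fix $x \in \widetilde{W}$ and any sufficiently small $(c,\sigma)$, and let $H$ range over a neighborhood of $0$ in $\mathrm{Sym}^2 T_x^*M_1$. The only $H$-dependent term in \eqref{eq_qlin_pointwise} is the first, linear in $H$, so its coefficient must vanish, giving $\mathcal{A}_1(x,c,\sigma) = (\phi^*\mathcal{A}_2)(x,c,\sigma) =: \mathcal{A}(x,c,\sigma)$, which is \eqref{det_of_A}. Substituting back into \eqref{eq_qlin_pointwise} leaves exactly \eqref{det_of_B}. The anticipated main obstacle is the jet-realization step, i.e.\ verifying that the Runge approximation in Appendix~\ref{runge_apprx_sec} is flexible enough to prescribe second covariant derivatives of $v$ at an interior point via sources supported in $C_c^\infty(W_1)$.
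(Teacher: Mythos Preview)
Your argument is correct, but it takes a considerably longer route than the paper's. The paper observes (Lemma~\ref{all_equations}) that for \emph{any} small $v\in C_c^\infty$ supported in the coordinate chart, the function $v\circ U_2$ extended by zero is automatically the unique small solution of $Q_2 u=F$ with $F:=Q_2(v\circ U_2)$ and $u|_{\partial M}=0$; the intertwining hypothesis then forces the coordinate representations of the two solutions to agree, and hence $\tilde Q_1 v=\tilde Q_2 v$ for all small compactly supported $v$. One then simply plugs in explicit test functions $v(x)=\chi(x)\bigl(c+\sigma\cdot(x-y)+\tfrac12 A(x-y)\cdot(x-y)\bigr)$ to read off \eqref{det_of_A} and \eqref{det_of_B}. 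No Runge approximation, no linearization, and no implicit function theorem are needed: because this is a \emph{source} problem rather than a Dirichlet problem, every small compactly supported function is already a solution to some admissible equation, so the jet-realization step you flag as the ``main obstacle'' is trivial here. Your IFT-plus-Runge machinery would be the right tool if you only had access to solutions of $Q_j u=0$ (as in the DN-map setting of Proposition~\ref{runge_prescribed_jet}), but in the present source-to-solution setting it is overkill. That said, your approach does work: the surjectivity of the linear jet map follows immediately by the same ``$v$ solves $L_1 v = L_1 v$'' trick, and then the submersion theorem (with finite-dimensional target) gives the nonlinear jet realization.
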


To prove the proposition, we begin with the following observation. 
\begin{Lemma}\label{all_equations}
 Assume the conditions and notation in Proposition~\ref{local_det_of_coef_for_Q}. 
 Let $x\in \Wi_1$ and let $U_2$ be coordinates on a neighborhood $\Omega_2\subset \subset W_2$ of $\phi(x)\in \Wi_2$. Let $U_1=\phi^*U_2$ be coordinates on a neighborhood $\phi^{-1}(\Omega_2)\subset \subset W_1$ of the point $x$. 
 
 Denote $\Omega=U_2(\Omega_2)\subset \R^n$. There is $\delta' >0$ such that for all $v\in C_c^\infty(\Omega)$ with $\norm{v}_{C^{2,\alpha}(\Omega)}\leq\delta'$ we have
 \[
 \tilde{Q}_1 v=\tilde{Q}_2 v \mbox{ on } \Omega.
 \]
 Here $\tilde{Q}_j$ are the coordinate representations of $Q_j$ in coordinates $U_j$, $j=1,2$.
\end{Lemma}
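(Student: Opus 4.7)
The plan is to construct, for each sufficiently small $v \in C_c^\infty(\Omega)$, matching functions on $M_1$ and $M_2$ whose coordinate representations equal $v$, and then use the source-to-solution hypothesis together with uniqueness of small solutions to force $\tilde{Q}_1$ and $\tilde{Q}_2$ to produce the same source in these coordinates.

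Concretely, I would set $w_2 := v\circ U_2$ and $w_1 := v \circ U_1 = w_2\circ\phi$, extended by zero to $M_2$ and $M_1$ respectively. After shrinking $\Omega_2$ slightly if necessary (its center $\phi(x)$ lies in $\Wi_2$, so this is harmless), both $w_j$ lie in $C_c^\infty(\Mi_j)$ and trivially satisfy $w_j|_{\partial M}=0$. Define $f_2 := Q_2 w_2$. Because $\mathcal{B}_2(x,0,0)=0$ forces $Q_2(0)=0$, the source $f_2$ is compactly supported in $\Omega_2 \subset\subset W_2$, and by continuity of the nonlinear map $v\mapsto Q_2(v\circ U_2)$ from $C^{2,\alpha}(\Omega)$ into $C^\alpha(M_2)$ we can arrange $\norm{f_2}_{C^\alpha(M_2)}<\varepsilon$ by taking $\delta'$ small.

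The decisive step uses Proposition \ref{prop_nonlinear_wellposedness}: for $v$ small enough, $w_2$ is the \emph{unique} small solution of $Q_2 u = f_2$ with $u|_{\partial M}=0$, so $S_2(f_2) = w_2|_{W_2}$. The hypothesis $\phi^*S_2 f = S_1\phi^* f$ then yields $S_1(\phi^* f_2) = (w_2\circ\phi)|_{W_1} = w_1|_{W_1}$, which means that the unique small solution $u_1$ of $Q_1 u_1 = \phi^* f_2$ with $u_1|_{\partial M}=0$ coincides with $w_1$ on $W_1$. Since $Q_1$ is a second order differential operator, pointwise agreement of $u_1$ and $w_1$ on $W_1$ entails agreement of all their derivatives up to order two there, so $Q_1 w_1 = Q_1 u_1 = \phi^* f_2 = \phi^*(Q_2 w_2)$ pointwise on $W_1$.

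Finally, pulling this intrinsic identity through the coordinate charts and using $U_1 = U_2\circ\phi$ (so that $\phi\circ U_1^{-1} = U_2^{-1}$ on $\Omega$), one has for each $x\in\Omega$
\[
\tilde{Q}_1 v(x) = (Q_1 w_1)(U_1^{-1}(x)) = (Q_2 w_2)(\phi(U_1^{-1}(x))) = (Q_2 w_2)(U_2^{-1}(x)) = \tilde{Q}_2 v(x),
\]
which is the claim. I expect the main, mild, obstacle to be choosing a single $\delta'$ that uniformly places both $f_2$ on $M_2$ and $\phi^* f_2$ on $M_1$ below the smallness thresholds of Proposition \ref{prop_nonlinear_wellposedness}; this follows from continuity of the nonlinear maps $v\mapsto Q_j(v\circ U_j)$ near $v=0$ and the fact that $\phi$ is a fixed diffeomorphism of compact closures.
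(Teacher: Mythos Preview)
Your proposal is correct and follows essentially the same route as the paper: define $F=Q_2(v\circ U_2)$, identify $v\circ U_2$ as the unique small solution via Proposition~\ref{prop_nonlinear_wellposedness}, transfer through the source-to-solution identity to get $u_F^1\circ U_1^{-1}=u_F^2\circ U_2^{-1}=v$ on $\Omega$, and then read off $\tilde Q_1 v=\tilde Q_2 v$ by coordinate invariance. Your remark that a single $\delta'$ must control both $\norm{f_2}_{C^\alpha(M_2)}$ and $\norm{\phi^*f_2}_{C^\alpha(M_1)}$ (and also $\norm{v\circ U_2}_{C^{2,\alpha}}$ against the uniqueness threshold $\delta_2$) is exactly the bookkeeping the paper spells out.
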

\begin{proof}
 Let $x\in \Wi_1$ and let $U_1$ and $U_2$ be coordinate systems as in the statement of the lemma. Let $\eps_j,\delta_j>0$, $j=1,2$, be as in the definition \eqref{s_w_definitiona}--\eqref{s_w_definitionb} of the source-to-solution mapping of $Q_j$. Set $\eps=\min(\eps_1,\eps_2)$. 
 Let $\delta'>0$ be such that if $v\in C_c^\infty(\Omega)$ satisfy $\norm{v}_{C^{2,\alpha}(\Omega)}\leq\delta'$, then 
 \[
 \min\big(\norm{Q_2(v\circ U_2)}_{C^{\alpha}(W_2)},\norm{\phi^*(Q_2(v\circ U_2))}_{C^{\alpha}(W_1)}\big)\leq\eps
 \]
 and 
 \[
  \norm{v\circ U_2}_{C^{2,\alpha}(W_2)}\leq\delta_2.
 \]
 Such a $\delta'$ can be found since $\mathcal{B}_2(x,0,0)=0$ by assumption~\eqref{b_assumption}, since $Q_2:C^{2,\alpha}(W_2)\to C^{\alpha}(W_2)$ is continuous and since composing a compactly supported function with $C^\infty$ diffeomorphism is continuous on H\"older spaces.
 
 Let $v\in C_c^\infty(\Omega)$ with $\norm{v}_{C^{2,\alpha}(\Omega)}\leq\delta'$. Then the problem $Q_2u_F^2=F$ in $M_2$ with $u_F^2|_{\p M}=0$ where 
 \[
 F:=Q_2(v\circ U_2)\in C^\infty_c(W_2)
 \]
 has a unique solution $u_F^2$ with $\norm{u_F^2}_{C^{2,\alpha}(M_2)}\leq\delta_2$ by Proposition~\ref{prop_nonlinear_wellposedness}. Let us extend $v\circ U_2$ by zero onto $M_2$. By the condition $\mathcal{B}_2(x,0,0)=0$ and we have
 \[
  Q_2(v\circ U_2)=F \text{ on } M_2 \text{ and } v\circ U_2=0 \text{ on } \p M.
 \]
Since $\norm{v\circ U_2}_{C^{2,\alpha}(M_2)}\leq\delta_2$, by uniqueness we have that
 \begin{equation}\label{uisv}
 u_F^2=v\circ U_2.
 \end{equation}

 By the definition of $\delta'$ we have that $Q_1u_F^1=\phi^*F$ with $u_F^1|_{\p M}=0$ has a unique solution with $\norm{u_F^1}_{C^{2,\alpha}(M_1)}\leq\delta_1$. Since the source-to-solutions mappings agree, we have 
 \begin{equation*}
 u_F^1|_{W_1}=S_1\phi^*F=\phi^*S_2F=\phi^*(u^2_F|_{W_2}).
 \end{equation*}
By using $\phi|_{\Omega_1}=U_2^{-1}\circ U_1$ it follows that
 \begin{equation}\label{sols_agree}
  u_F^1\circ U_1^{-1}=u_F^2\circ U_2^{-1} \text{ on } \Omega.
 \end{equation}
We also have on $W_2$ that
 \begin{equation}\label{Q1toQ2}
 Q_2u_F^2=F=F\circ\phi\circ\phi^{-1}=(Q_1u_F^1)\circ\phi^{-1}.
 \end{equation}
 Using the coordinate invariance of $Q_j$, we have by \eqref{uisv}--\eqref{Q1toQ2} that
 \begin{align*}
  \tilde{Q}_2v&=\tilde{Q}_2(u_F^2\circ U_2 ^{-1})=(Q_2u_F^2)\circ U_2^{-1}=(Q_1u_F^1)\circ (U_2\circ \phi)^{-1} \\
  &=(Q_1u_F^1)\circ U_1^{-1}=\tilde{Q}_1(u_F^1\circ U_1^{-1})=\tilde{Q}_1(u_F^2\circ U_2^{-1})=\tilde{Q}_1v. \qedhere
 \end{align*}
\end{proof}
The lemma tells us that we can use any test function $v$ with small enough $C^{2,\alpha}$ norm to solve for the coefficients of $\tilde{Q}_j$ from the equation $\tilde{Q}_1v=\tilde{Q}_2v$ in the coordinates $U_1$ and $U_2$. The local determination result of Proposition~\ref{local_det_of_coef_for_Q} is a consequence of this observation. Its proof is similar to that of Proposition~\ref{morphism_to_homothety} where we used harmonic functions to solve for (a multiple of) the metric in the Calder\'on problem.

\begin{proof}[Proof of Proposition~\ref{local_det_of_coef_for_Q}]
Let $x_0\in \Wi_1$ and let $U_2$ be coordinates on a neighborhood $\Omega_2\subset \subset \Wi_2$ of $\phi(x_0)\in \Wi_2$ and let $U_1=\phi^*U_2$ be coordinates on a neighborhood $\Omega_1=\phi^{-1}(\Omega_2)$ of $x_0$. Denote $\Omega=U_2(\Omega_2)=U_1(\Omega_1)$. By the Lemma~\ref{all_equations} above, we have that there is $\delta'>0$ such that for all $v\in C_c^\infty(\Omega)$ with $\norm{v}_{C^{2,\alpha}(\Omega)}\leq \delta'$ we have
\begin{equation}\label{test_v}
\tilde{Q}_1v=\tilde{Q}_2v.
\end{equation}
Here $\tilde{Q}_j$ are the coordinate representations of $Q_j$ in coordinates $U_j$, $j=1,2$. 

We construct the test functions we use. We may assume that $U_1(x_0)=U_2(\phi(x_0))=0$. Let $r>0$ so that $B(0,r)\subset \subset \Omega\subset \R^n$, and let $\chi:\Omega\to \R$ be a cutoff function, which is $1$ on $\overline{B}(0,r)$ and vanishes outside $\overline{B}(0,r')\subset\subset \Omega$ for some $r'>r$. Define $\widehat{\Omega}=U_2^{-1}(B(0,r))$.
If $y\in \widehat{\Omega}$ and $(c,\sigma)\in \mathbb{R}\times T_{y}^*\widehat{\Omega}$, define a function $v=v_{(y,c,\sigma,A)}\in C_c^\infty(\Omega)$ in the coordinates $U_2$ as
\begin{equation}\label{testfunction}
 v(x)=\chi(x)(c+\sigma\cdot (x-y)+\frac{1}{2}A(x-y)\cdot (x-y)),
\end{equation}
where $A$ is a symmetric $n\times n$-matrix.
Then we have
\[
 v(y)=c, \  dv(y)=\sigma  \mbox{ and } \p_{ab}v(y)=A_{ab}.
\]
There is $\delta>0$ such that for all $y\in \widehat{\Omega}$ and $(c,\sigma)\in \mathbb{R}\times T_{y}^*\widehat{\Omega}$ with $\abs{c}+\abs{\sigma}_{g_2}+ \norm{A}\leq 2\delta$, we have
\begin{equation}\label{lessthandelta}
 \norm{v_{(y,c,\sigma,A)}}_{C^{2,\alpha}(\Omega)}<\delta'.
\end{equation}
Here $\norm{A}$ is the Hilbert-Schmidt norm of matrices and $\delta$ is independent of $y\in \widehat{\Omega}$.

Let $y\in \widehat{\Omega}$ and let $(c,\sigma)\in \mathbb{R}\times T_{y}^*\widehat{\Omega}$ with $\abs{c}+\abs{\sigma}_{g_2}\leq \delta$. Let $A$ be a matrix with $\norm{A}\leq \delta$ and let $v=v_{(y,c,\sigma,A)}$ be the function defined in~\eqref{testfunction}. Since $\norm{v}_{C^{2,\alpha}(\Omega)}< \delta'$ by~\eqref{lessthandelta}, the equation~\ref{test_v} implies that
 \begin{align}\label{equation_to_solve_from}
 \mathcal{A}_1&(x,v(x),dv(x))^{ab} (\p_{ab}v(x)-\Gamma_{1,ab}^k(x)\p_kv(x))+\mathcal{B}_1(x,v(x),dv(x)) \\
 &= \mathcal{A}_2(x,v(x),dv(x))^{ab} (\p_{ab}v(x)-\Gamma_{2,ab}^k(x)\p_kv(x))+\mathcal{B}_2(x,v(x),dv(x)), \nonumber
 \end{align}
 for $x\in \Omega$. Here $\Gamma_{j,ab}^k$ are the Christoffel symbols of $g_j$, and $\mathcal{A}_j$ and $\mathcal{B}_j$ are understood as their coordinate representations in coordinates $U_j$.

 Let us first choose as the matrix $A$ a one with $\norm{A}\leq \delta$ and that satisfies have
 \[
  A \perp \mathcal{A}_1(y,v(y),dv(y))-\mathcal{A}_2(y,v(y),dv(y)).
 \]
 Here $\perp$ is defined with respect to the Hilbert-Schmidt inner product of matrices. We have $\norm{v}_{C^{2,\alpha}(\Omega)}\leq 2\delta$ and thus the equation~\ref{equation_to_solve_from} holds. It follows that 
 \begin{equation}\label{cond_for_B}
  \mathcal{B}_1(y,c,\sigma)-\mathcal{B}_2(y,c,\sigma)=\big(\mathcal{A}_1^{ab}(y,c,\sigma)\Gamma_{1,ab}^k(y)-\mathcal{A}_2^{ab}(y,c,\sigma)\Gamma_{2,ab}^k(y)\big)\sigma_k.
 \end{equation}
 We next choose the matrix $A$ as 
 \[
  A_{ab}=\rho(\mathcal{A}^{ab}_1(y,c,\sigma)-\mathcal{A}^{ab}_2(y,c,\sigma)),
 \]
 where $\rho>0$ is a number small enough so that $\norm{A}\leq \delta$.
It follows from~\eqref{equation_to_solve_from} and by using~\eqref{cond_for_B} that
\[
 \rho\norm{\mathcal{A}_1(y,c,\sigma)-\mathcal{A}_2(y,c,\sigma)}^2=0,
\]
where $\norm{\cdot}$ is the Hilbert-Schmidt norm of matrices.
Thus
\begin{equation}\label{solveda}
 \mathcal{A}_1^{ab}(y,c,\sigma)=\mathcal{A}_2^{ab}(y,c,\sigma),
\end{equation}
which combined with~\eqref{cond_for_B}, yields
\begin{equation}\label{solved}
   \mathcal{B}_1(y,c,\sigma)-\mathcal{B}_2(y,c,\sigma)=\mathcal{A}_1^{ab}(y,c,\sigma)(\Gamma_{1,ab}^k(y)-\Gamma_{2,ab}^k(y))\sigma_k.
\end{equation}

The equations~\eqref{solveda} and~\eqref{solved} hold for all $(y,c,\sigma)\in \widehat{\Omega}\times \R \otimes T^*\widehat{\Omega}$ with $\abs{c}+\abs{\sigma}_{g_2}\leq \delta$. Recall that~\eqref{solveda} and~\eqref{solved} are equations for the coordinate representations of $\mathcal{A}_1, \mathcal{B}_1$ and $\mathcal{A}_2, \mathcal{B}_2$ in coordinates $U_1$ and $U_2$ respectively, where $U_1=\phi^*U_2$. Thus we have, by redefining $\delta$ if necessary, that
\begin{align*}
 \mathcal{A}_1&(x,c,\sigma)=\phi^*\mathcal{A}_2(x,c,\sigma)=:\mathcal{A}(x,c,\sigma) \\
  \mathcal{B}_1&(x,c,\sigma)-\phi^*\mathcal{B}_2(x,c,\sigma)=\mathcal{A}^{ab}(x,c,\sigma)(\Gamma(g_1)_{ab}^k-\Gamma(\phi^*g_2)_{ab}^k)\sigma_k,
\end{align*}
for $(x,c,\sigma)\in (\phi^{-1}\widehat{\Omega})\times \R \otimes T^*(\phi^{-1}\widehat{\Omega})$ with $\abs{c}+\abs{\sigma}_{g_1}\leq \delta$. Let $\widetilde{W}$ be an open set compactly contained in $\Wi_1$. Since we may carry out the argument above on a neighborhood of any point $x_0\in \Wi_1$, we have the claim by compactness for some $\delta>0$.
 \end{proof}
 
\subsection{Global determination of the coefficients}
We prove the main theorem of this section.
\begin{proof}[Proof of Theorem~\ref{qlin_main_thm}]
By Theorem~\ref{global_determination_of_harmonic_functions_for_L} we know that there is a real analytic diffeomorphism $J:\Mi_1\to \Mi_2$ that satisfies $J|_{\Wi_1}=\phi:\Wi_1\to \Wi_2$. It follows that $J^*\mathcal{A}_2$ and $J^*\mathcal{B}_2$, which are given by
\[
 (J^*\mathcal{A}_2)(x,c,\sigma)=((DJ)^{-1})^T|_{J(x)}\mathcal{A}_2(J(x),c,J^{-1*}\sigma)(DJ)^{-1}|_{J(x)}
\]
and
\begin{equation}\label{J_transformed}
 (J^*\mathcal{B}_2)(x,c,\sigma)=\mathcal{B}_2(J(x),c,J^{-1*}\sigma),
\end{equation}
are real analytic for all $x\in \Mi_1$ and $(c,\sigma)\in \R\times T^*_x\Mi_1$. By Proposition~\ref{local_det_of_coef_for_Q} we have that there is a non-empty open set $\widetilde{W}\subset \Wi_1$ and $\delta>0$ such that for all $(c,\sigma)\in \R\times T_x^*\widetilde{W}$ with $\abs{c}+\abs{\sigma}_{g_1}\leq \delta$, we have
\begin{align}\label{finalclaim}
 \mathcal{A}_1&(x,c,\sigma)=J^*\mathcal{A}_2(x,c,\sigma) \text{ and } \\
 \mathcal{B}_1&(x,c,\sigma)-J^*\mathcal{B}_2(x,c,\sigma)=\mathcal{A}^{ab}(x,c,\sigma)(\Gamma(g_1)_{ab}^k(x)-\Gamma(J^*g_2)_{ab}^k(x))\sigma_k, \nonumber
\end{align}
where $\mathcal{A}=\mathcal{A}_1=J^*\mathcal{A}_2$. By real analyticity, we have~\eqref{finalclaim} for all $x\in \Mi_1$ and $(c,\sigma)\in \R\times T_x^*\Mi_1$.
\end{proof}
 
\begin{proof}[Proof of Corollary~\ref{qlin_cor_resolve_thm}]
(1) Let $(x,c,\sigma)\in \Mi_1\times \mathbb{R}\otimes T^*\Mi_1$. Assume that $\mathcal{A}_1(x,c,\sigma)$ is $s$-homogeneous in the $\sigma$-variable and that $\mathcal{B}_1$ and $\mathcal{B}_2$ are $s'$-homogeneous with $s'\neq s+1$. It follows from~\eqref{J_transformed} that 
 $J^*\mathcal{B}_2$ 
 is $s'$-homogeneous in the $\sigma$-variable. By Theorem~\ref{qlin_main_thm} we know that $J^*\mathcal{A}_2=\mathcal{A}_1$ is $s+1$ homogeneous in the $\sigma$-variable. Thus  
\begin{equation*}\label{gammasigma}
\Gamma(x,c,\sigma):=\mathcal{A}^{ab}(x,c,\sigma)(\Gamma(g_1)_{ab}^k(x)-\Gamma(J^*g_2)_{ab}^k(x))\sigma_k, 
 \end{equation*}
 where $\mathcal{A}=\mathcal{A}_1=J^*\mathcal{A}_2$, is $(s+1)$-homogeneous in the $\sigma$-variable. Since 
 %
 \[
  \mathcal{B}(x,c,\sigma):=\mathcal{B}_1(x,c,\sigma)-J^*\mathcal{B}_2(x,c,\sigma)
 \]
 is $s'$-homogeneous, $s'\neq s+1$, in the $\sigma$-variable, we must have 
 \[
  \mathcal{B}_1-J^*\mathcal{B}_2=0 \text{ and } \Gamma(x,c,\sigma)=0
 \]
by Theorem~\ref{qlin_main_thm}.

 (2) Assume that $\phi$ is an isometry. Since $J|_{\Wi_1}=\phi|_{\Wi_1}$, we have $\Gamma(g_1)_{ab}^k=\Gamma(\phi^*g_2)_{ab}^k=\Gamma(J^*g_2)_{ab}^k$ on $\Wi_1$. Since $\Gamma(g_1)_{ab}^k-\Gamma(J^*g_2)_{ab}^k$ is a real analytic tensor field, which vanishes on $\Wi_1$, it vanishes on $\Mi_1$. Thus $\Gamma(x,c,\sigma)=0=\mathcal{B}(x,c,\sigma)$ for all $(x,c,\sigma)\in \Mi_1\times \mathbb{R}\otimes T^*\Mi_1$.
 \end{proof}

\appendix

\section{Runge approximation results}\label{runge_apprx_sec}

Here we prove the Runge approximation results that are used repeatedly in this paper. In this section, which is independent of the other sections, we will assume that $(M,g)$ is a compact connected oriented Riemannian manifold with boundary, and $\dim(M) \geq 2$. We will mostly use the following easy consequence of Runge approximation.

\begin{Proposition} \label{prop_runge_consequence}
Let $\Gamma$ be a nonempty open subset of $\partial M$, and denote by $u_f$ the solution of $\Delta_g u = 0$ in $M$ with $u|_{\partial M} = f$.
\begin{enumerate}
\item[(a)] 
If $x \in \Mi \cup \Gamma$, $y \in M$ and $x \neq y$, there is $f \in C^{\infty}_c(\Gamma)$ such that 
\[
u_f(x) \neq u_f(y).
\]
\item[(b)] 
If $x \in \Mi \cup \Gamma$ and $v \in T_x^* M$, there is $f \in C^{\infty}_c(\Gamma)$ such that 
\[
du_f(x) = v.
\]
\end{enumerate}
\end{Proposition}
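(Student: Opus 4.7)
The plan is to derive both statements from a Runge-type approximation result established earlier in this appendix: for any $x \in M^{\mathrm{int}} \cup \Gamma$ and any function $w$ harmonic on a neighborhood of $x$, there exist $f_k \in C^\infty_c(\Gamma)$ with $u_{f_k} \to w$ in $C^k$ on compact subsets (see, e.g., Proposition \ref{runge_prescribed_jet}, which even realizes prescribed jets at $x$ modulo natural constraints). Once such an approximation statement is in hand, both parts follow by short arguments.

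For part (a), I would split into two cases. If $x, y \in M^{\mathrm{int}}$ and $x \neq y$, choose disjoint open balls $B_x, B_y$ and let $w$ be the locally harmonic function equal to $1$ on $B_x$ and $0$ on $B_y$; $C^0$ Runge approximation on $\overline{B}_x \cup \overline{B}_y$ produces $f \in C^\infty_c(\Gamma)$ with $u_f(x) > 2/3 > 1/3 > u_f(y)$. If instead $x \in \Gamma$, take $f \in C^\infty_c(\Gamma)$ to be a nonnegative bump with $f(x)=1$, $\norm{f}_{L^\infty}\leq 1$, supported in a small neighborhood $\mathcal{V}$ of $x$ in $\Gamma$ avoiding $y$ when $y \in \partial M$. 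Then $u_f(x) = 1$, while either $u_f(y) = f(y) = 0$ (if $y \in \partial M \setminus \mathcal{V}$) or, by the Poisson formula, $\abs{u_f(y)} \leq \norm{\partial_\nu G(y,\cdot)}_{L^\infty(\Gamma)}\abs{\mathcal{V}}$ can be made arbitrarily small by shrinking $\mathcal{V}$ (if $y \in M^{\mathrm{int}}$). In every case $u_f(x) \neq u_f(y)$.

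For part (b), I handle the interior and boundary cases separately. In the interior case, I use harmonic coordinates $(y^1,\ldots,y^n)$ centered at $x$: every $v \in T_x^* M$ equals $d(v_i y^i)(x)$, and $C^1$ Runge approximation of the locally harmonic function $v_i y^i$ by $u_{f_k}$ gives $du_{f_k}(x) \to v$. Since the image of $L_x: f \mapsto du_f(x)$ is a linear subspace of the finite-dimensional space $T_x^* M$, it is closed; being dense, it equals $T_x^* M$. In the boundary case $x \in \Gamma$, I argue by duality: if $V \in T_x M$ satisfies $du_f(x)\cdot V = 0$ for every $f \in C^\infty_c(\Gamma)$, decompose $V = V_\tau + V_\nu \nu$ with $V_\tau \in T_x \partial M$ and $\nu$ the inward unit normal. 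Using $u_f|_{\partial M} = f$ to identify tangential derivatives,
\[
du_f(x)\cdot V = V_\tau \cdot df(x) + V_\nu \Lambda_g f(x).
\]
Taking $f \in C^\infty_c(\Gamma)$ nonnegative, not identically zero, and supported away from $x$ yields $df(x) = 0$ and, by the maximum principle and Hopf's lemma applied to the harmonic function $u_f \geq 0$ vanishing at its boundary minimum $x$, $\Lambda_g f(x) = \partial_\nu u_f(x) > 0$; hence $V_\nu = 0$. Then varying $df(x)$ over $T_x^* \partial M$ via bump-function constructions in boundary coordinates forces $V_\tau = 0$, so $V = 0$ and $L_x$ is surjective.

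The main obstacle is securing the $C^1$ Runge approximation at an interior point that drives the interior case of part (b); this is precisely the content of the preceding Runge-approximation machinery combined with elliptic regularity, and it is also what makes the closedness/finite-dimension argument work cleanly. The boundary case of part (b) requires no approximation statement beyond explicit bump constructions and Hopf's lemma.
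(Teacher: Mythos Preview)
Your case analysis in part (a) is incomplete: you treat $x,y \in M^{\mathrm{int}}$ and $x \in \Gamma$, but omit the case $x \in M^{\mathrm{int}}$, $y \in \partial M$. This is easy to patch---run your Runge approximation with $\Gamma$ replaced by an open subset $\Gamma' \subset\subset \Gamma \setminus \{y\}$, so that $u_f(y)=f(y)=0$ while $u_f(x)$ is close to $1$---and the paper does exactly this. With that case added, your argument for (a) coincides with the paper's.

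For part (b), your interior case follows the paper's logic (the paper simply cites Proposition~\ref{runge_prescribed_jet}, whose interior step is the same finite-dimensional closed-range argument you give). Your boundary case, however, takes a genuinely different route: instead of extending $M$ to a slightly larger manifold $M_1$ so that $x$ becomes an interior point (which is how Proposition~\ref{runge_prescribed_jet} handles $x\in\Gamma$), you argue directly via Hopf's lemma to kill the normal component and bump constructions on $\partial M$ to kill the tangential component. This is correct and more elementary---it avoids the extension trick and the full 2-jet machinery. The paper's route has the advantage that it delivers the stronger 2-jet statement (needed later, e.g., in Proposition~\ref{morphism_to_homothety}) in one stroke, whereas your Hopf argument is tailored to the 1-jet and does not immediately upgrade.
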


The main Runge approximation result is the following.

\begin{Proposition} \label{prop_runge_approximation_cauchy}
Let $s \geq 1$, let $\Gamma$ be a nonempty open subset of $\partial M$, and let $U \subset \subset M^{\mathrm{int}}$ be a domain with $C^{\infty}$ boundary such that $M \setminus \overline{U}$ is connected. Let also $L$ be a second order uniformly elliptic differential operator on $M$, and let $\mathcal{P}$ be the Poisson operator for $L$. Then the set 
\[
\mathcal{R} = \{ \mathcal{P} f|_{U} \,;\, f \in C^{\infty}_c(\Gamma) \}
\]
is dense in the space $\mathcal{S} = \{ u \in H^s(U) \,:\, L u = 0 \text{ in $U$} \}$ with respect to the $H^s(U)$ norm.
\end{Proposition}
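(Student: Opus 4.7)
The plan is to invoke the classical Lax--Malgrange duality argument together with elliptic unique continuation for the formal adjoint $L^*$.

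Arguing by contradiction, I would suppose $\mathcal{R}$ is not dense in $\mathcal{S}$. Since $\mathcal{S}$ is a closed subspace of $H^s(U)$ (because $L : H^s(U)\to H^{s-2}(U)$ is continuous), the Hahn--Banach theorem produces a continuous linear functional $\varphi$ on $H^s(U)$ that vanishes on $\mathcal{R}$ but satisfies $\varphi(u_0)\neq 0$ for some $u_0\in \mathcal{S}$. Such a functional is represented by pairing with a distribution $\mu$ whose support lies in $\overline{U}$, belonging to the dual class of $H^s(U)$ (essentially $\widetilde H^{-s}(U)$, extended by zero to a distribution on $M$).

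Next, let $w$ solve the adjoint Dirichlet problem $L^* w = \mu$ in $M$ with $w|_{\partial M} = 0$; solvability follows from ellipticity of $L^*$ and the usual Fredholm theory paired with the assumed well-posedness of the Dirichlet problem for $L$. For every $f\in C^\infty_c(\Gamma)$, Green's identity on $M$ together with $L(\mathcal{P}f) = 0$ and $w|_{\partial M} = 0$ yields
\begin{equation*}
0 \;=\; \varphi(\mathcal{P}f|_U) \;=\; \langle \mathcal{P}f, L^* w\rangle_M \;=\; \langle L\mathcal{P}f, w\rangle_M + \int_\Gamma f\,\partial_\nu w\,dS \;=\; \int_\Gamma f\,\partial_\nu w\,dS,
\end{equation*}
so $\partial_\nu w|_\Gamma = 0$. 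Because $\mathrm{supp}\,\mu \subset \overline{U}$, the function $w$ solves $L^* w = 0$ in $M\setminus\overline{U}$ with zero Cauchy data on $\Gamma$. By boundary unique continuation for second-order elliptic operators (e.g.~\cite[Theorem 3.3.1]{Isakov_book}) and the connectedness hypothesis on $M\setminus\overline{U}$, I would conclude $w\equiv 0$ throughout $M\setminus\overline{U}$. The interior and exterior traces $w|_{\partial U}$ and $\partial_\nu w|_{\partial U}$ therefore vanish, and a second application of Green's identity on $U$ gives
\begin{equation*}
\varphi(u_0) \;=\; \langle u_0, L^* w\rangle_U \;=\; \langle Lu_0, w\rangle_U + [\text{boundary terms on }\partial U] \;=\; 0,
\end{equation*}
contradicting $\varphi(u_0)\neq 0$ and establishing density.

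The main technical obstacle is justifying both integration-by-parts identities at the Sobolev regularity dictated by $\mu\in H^{-s}(M)$ for general $s\geq 1$, as $w$ has only $H^{2-s}$ regularity across $\partial U$. I would handle this by approximating $\mu$ in $H^{-s}(M)$ by smooth densities compactly supported in $\overline{U}$, carrying out all pairings with the corresponding smooth solutions $w_n$, and then passing to the limit using continuity of the trace operators on the relevant Sobolev spaces; the unique continuation step is applied to the limiting $w$ directly in $M\setminus\overline{U}$, where it is smooth regardless of $s$.
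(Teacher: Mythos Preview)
Your overall strategy is the paper's: Hahn--Banach, solve the adjoint problem, deduce vanishing Cauchy data on $\Gamma$ via a first Green identity, apply unique continuation in the connected set $M\setminus\overline{U}$, and then a second pairing to kill $\varphi(u_0)$. The paper embeds $M$ into a closed manifold $N$ and extends $L$ to an elliptic operator $A$ on $N$ precisely in order to avoid making sense of the Dirichlet problem $L^* w=\mu$ in negative-order Sobolev spaces on a manifold with boundary; you instead appeal directly to Lions--Magenes--type solvability on $M$, which is legitimate but heavier to cite.

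There is, however, a genuine gap in your treatment of the second integration by parts on $U$. Your plan is to approximate $\mu$ by smooth $\mu_n$, carry the boundary terms on $\partial U$ for the smooth $w_n$, and pass to the limit ``using continuity of the trace operators''. But $w_n\to w$ only in $H^{2-s}(M)$, and for $s\ge 1$ the normal trace $\partial_\nu w_n|_{\partial U}$ is \emph{not} controlled by that convergence (the Sobolev trace theorem requires $2-s>3/2$); for $s\ge 3/2$ even the Dirichlet trace is out of reach. So the assertion that ``interior and exterior traces of $w$ on $\partial U$ vanish'' is not justified at this regularity, and the boundary terms for $w_n$ need not tend to zero. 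The fix---and what the paper actually does---is to approximate in the other direction: once unique continuation yields $w\in H^{2-s}_{\overline{U}}$, the smoothness of $\partial U$ allows you to approximate $w$ itself by $\psi_j\in C^\infty_c(U)$ in the $H^{2-s}$ norm (cf.\ \cite[Theorem 3.29]{McLean}). Then $L^*\psi_j\to \mu$ in $H^{-s}$ and, since the $\psi_j$ are compactly supported in $U$, there are \emph{no} boundary terms at all: $\langle L^*\psi_j, u_0\rangle_U=\langle \psi_j, Lu_0\rangle_U=0$. This replaces your trace-limit argument cleanly and works for every $s\ge 1$.
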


The proof is a standard Runge approximation argument, which boils down to the solvability of the adjoint equation in negative order Sobolev spaces and the unique continuation property. It will be convenient to embed $(M,g)$ in a closed manifold $(N,g)$ and to extend the operator $L$ in $M$ as a second order uniformly elliptic operator $A$ in $N$. In this way we avoid having to consider boundary values of solutions in negative order Sobolev spaces. The required solvability result will follow from the next lemma, where $A^*$ is the formal $L^2$-adjoint.

\begin{Lemma} \label{lemma_negative_sobolev_solvability}
Let $(N,g)$ be a closed manifold, and let $A$ be an elliptic second order differential operator on $N$ with kernel $\mathcal{N} = \{ v \in C^{\infty}(N) \,;\, Av = 0 \}$.

If $s \in \mathbb{R}$, then for any $F \in H^{-s}(N)$ satisfying $(F, v)_N = 0$ for all $v \in \mathcal{N}$ there is a unique solution $u \in H^{-s+2}(N)$ of 
\[
A^*u = F \text{ in $N$}.
\]
Moreover, if $F|_U = 0$ for some open set $U \subset N$, then for any $m \geq 0$ and any $V \subset \subset U$ one has 
\[
\norm{u}_{H^m(V)} \leq C_{m,V} \norm{F}_{H^{-s}(N)}.
\]
\end{Lemma}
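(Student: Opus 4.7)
My approach is to combine the Fredholm theory for elliptic operators on closed manifolds with standard interior elliptic regularity. The existence and bound come from the closed range theorem; the interior smoothness of $u$ in $U$ from the fact that $A^*u$ vanishes there.

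The first step will be to set up the Fredholm framework. Since $N$ is closed and $A$ has order two and is elliptic, both $A$ and $A^*$ define bounded operators $H^{-s+2}(N) \to H^{-s}(N)$ for every $s \in \mathbb R$, and by elliptic theory on a closed manifold they are Fredholm (the parametrix yields the compact remainder). By elliptic regularity, the kernel of $A$ on any Sobolev scale equals $\mathcal N \subset C^\infty(N)$; likewise $\ker(A^*) = \mathcal N^* \subset C^\infty(N)$ is finite dimensional.

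Next I would apply the closed range theorem. Using the duality pairing $H^{-s}(N) \times H^s(N) \to \mathbb R$ induced by the $L^2$-inner product, the range of $A^* : H^{-s+2}(N) \to H^{-s}(N)$ is the annihilator of $\ker(A:H^s(N) \to H^{s-2}(N)) = \mathcal N$. Since elements of $\mathcal N$ are smooth, the annihilator condition is exactly $(F,v)_N = 0$ for every $v \in \mathcal N$, which is our hypothesis. Hence a solution $u \in H^{-s+2}(N)$ exists. Uniqueness holds modulo $\mathcal N^*$, so I will pick the unique $u$ orthogonal to $\mathcal N^*$ in $L^2$; the resulting solution operator is bounded, giving
\[
\norm{u}_{H^{-s+2}(N)} \leq C \norm{F}_{H^{-s}(N)}.
\]

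For the interior estimate, suppose $F|_U = 0$. Then $A^*u = 0$ in $U$, so by elliptic regularity $u \in C^{\infty}(U)$. Choosing $V \subset\subset W \subset\subset U$ and applying a standard interior elliptic estimate (cutoff plus commutator) yields
\[
\norm{u}_{H^m(V)} \leq C\bigl(\norm{A^*u}_{H^{m-2}(W)} + \norm{u}_{H^{-s+2}(W)}\bigr) = C\norm{u}_{H^{-s+2}(W)},
\]
which combined with the global Fredholm bound above produces the claimed inequality.

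The main obstacle is only cosmetic: as stated, ``unique'' is not literally correct because of the finite-dimensional $\mathcal N^*$, so I will explicitly quotient by $\mathcal N^*$ (or impose $u \perp \mathcal N^*$) to obtain a bounded inverse. The one mildly subtle point to check is that the duality pairing underlying the closed range theorem indeed matches the $L^2$-pairing $(F,v)_N$ used in the hypothesis, which is immediate because $\mathcal N \subset C^\infty(N)$ and so tests against $F \in H^{-s}(N)$ without ambiguity.
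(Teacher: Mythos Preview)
Your proposal is correct and follows essentially the same route as the paper: Fredholm theory for the elliptic operator $A^*$ on the closed manifold gives existence (and a bounded solution operator) once $F$ annihilates $\mathcal{N}$, and elliptic regularity in $U$ where $A^*u=0$ gives the local $H^m$ bound. The only cosmetic difference is that the paper obtains the local estimate via the parametrix identity $u=QF-Ru$ together with the pseudolocal property of $Q$, whereas you invoke the iterated interior elliptic estimate directly; these are equivalent, and your observation about normalizing modulo $\mathcal{N}^*$ to make ``unique'' literally true is a fair point that the paper leaves implicit.
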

\begin{proof}
Let $r \in \mathbb{R}$, and consider the map 
\[
T_r: H^{r+2}(N) \to H^r(N), \ \ T_r u = A^* u.
\]
By \cite[Theorem 19.2.1]{H3} this map is a Fredholm operator with finite dimensional kernel $\mathcal{N}^* := \{ v \in C^{\infty}(N) \,;\, A^* v = 0 \}$, and the range of $T_r$ is given by 
\[
\mathrm{Ran}(T_r) = \{ w \in H^r(N) \,;\, (w,v)_N = 0 \text{ for $v \in \mathcal{N}$} \}.
\]
In particular, the kernel and cokernel are independent of $r$. This proves the first statement.

Assume now that $F|_U \equiv 0$, let $V \subset \subset U$ and let $m \geq 0$. Let $\chi, \chi_1 \in C^{\infty}_c(U)$ satisfy $\chi = 1$ near $\overline{V}$ and $\chi_1 = 1$ near $\mathrm{supp}(\chi)$. Let also $Q \in \Psi^{-2}(N)$ be a parametrix for $A^*$, so that 
\[
QA^* = \mathrm{Id} + R
\]
where $R \in \Psi^{-\infty}(N)$. Then the solution $u = (A^*)^{-1} F$ of $A^*u = F$ in $N$ satisfies $u = QF - Ru = QF - R (A^*)^{-1} F$. Consequently 
\begin{align*}
\norm{u}_{H^m(V)} &\leq \norm{\chi u}_{H^m(N)} \leq \norm{\chi Q F}_{H^m(N)} + \norm{\chi R (A^*)^{-1} F}_{H^m(N)} \\
 &\leq \norm{\chi Q (1-\chi_1) F}_{H^m(N)} + \norm{\chi R (A^*)^{-1} F}_{H^m(N)}
\end{align*}
using that $F|_U \equiv 0$. Using the pseudolocal property of $Q$ and the fact that $R$ is smoothing, we get the estimate 
\[
\norm{u}_{H^m(V)} \leq C \norm{F}_{H^{-s}(N)}. \qedhere
\]
\end{proof}

We will also need the following simple lemma.

\begin{Lemma} \label{lemma_fix_orthogonality}
Let $(N,g)$ be a closed manifold, let $A$ be a second order uniformly elliptic differential operator on $N$, let $\mathcal{N}$ be the kernel of $A$, and let $\{ \psi_1, \ldots, \psi_m \}$ be an $L^2(N)$-orthonormal basis of $\mathcal{N}$. Let also $V \subset N$ be any nonempty open set. There exist $\eta_1, \ldots, \eta_m \in C^{\infty}_c(V)$ such that 
\[
(\eta_j, \psi_k)_N = \delta_{jk}, \qquad 1 \leq j, k \leq m.
\]
\end{Lemma}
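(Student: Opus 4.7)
The plan is to reduce the lemma to surjectivity of a single linear map. Consider
\[
T: C_c^\infty(V) \to \R^m, \qquad T(\eta) = \big( (\eta, \psi_1)_N, \ldots, (\eta, \psi_m)_N \big).
\]
Once $T$ is shown to be surjective, it suffices to take $\eta_j$ to be any element of $T^{-1}(e_j)$, where $e_j$ is the $j$-th standard basis vector of $\R^m$; the desired biorthogonality relation $(\eta_j, \psi_k)_N = \delta_{jk}$ is then immediate from the definition of $T$.

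To establish surjectivity I would argue by contradiction. If the image of $T$ were a proper subspace of $\R^m$, there would exist a nonzero vector $(a_1, \ldots, a_m) \in \R^m$ annihilating $T(C_c^\infty(V))$ with respect to the standard inner product. Setting $\psi := \sum_{j=1}^m a_j \psi_j \in \mathcal{N}$, this annihilation condition translates into $(\eta, \psi)_N = 0$ for every test function $\eta \in C_c^\infty(V)$, which forces $\psi|_V = 0$ in the sense of distributions, and then pointwise since $\psi$ is smooth.

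The only nontrivial analytic input is then weak unique continuation: since $\psi \in \mathcal{N}$ solves the elliptic equation $A\psi = 0$ on the connected closed manifold $N$ and vanishes on the nonempty open set $V$, Aronszajn's classical theorem for second order elliptic operators with smooth leading coefficients gives $\psi \equiv 0$ on $N$. Linear independence of the basis $\{\psi_1, \ldots, \psi_m\}$ then forces $a_1 = \cdots = a_m = 0$, contradicting our choice of vector and completing the proof. The unique continuation step is the only real obstacle, but it is standard and applies directly here, noting that $N$ is taken connected in the applications (it arises as a connected closed extension of the connected manifold $M$).
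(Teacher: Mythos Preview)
Your proof is correct and follows essentially the same approach as the paper: both define the linear map $T:C_c^\infty(V)\to\R^m$, $\eta\mapsto((\eta,\psi_k))_k$, and reduce the lemma to surjectivity of $T$, with elliptic unique continuation for $A$ as the key ingredient. The only cosmetic difference is that the paper verifies surjectivity via invertibility of the Gram matrix $((\psi_j,\psi_k)_V)$ followed by a density argument, whereas your annihilator argument reaches the same conclusion a bit more directly.
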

\begin{proof}
Consider the map 
\[
T: L^2(V) \to \mR^m, \ \ T \eta = ( (\eta, \psi_1)_V, \ldots, (\eta, \psi_m)_V ).
\]
Then $T(a_1 \psi_1|_V + \ldots + a_m \psi_m|_V) = S \vec{a}$, where $S = ( (\psi_j, \psi_k)_V )_{j,k=1}^m$. Now if $S\vec{a} = 0$, then in particular $S \vec{a} \cdot \vec{a} = \norm{a_1 \psi_1 + \ldots + a_m \psi_m}_{L^2(V)}^2 = 0$ and thus $a_1 \psi_1|_V + \ldots + a_m \psi_m|_V = 0$. It follows that $a_1 \psi_1 + \ldots + a_m \psi_m = 0$ in $N$ by elliptic unique continuation, showing that $\vec{a} = 0$ and that $S$ is invertible.

Since the matrix $S$ is invertible, $T$ is surjective. Finally, since $C^{\infty}_c(V)$ is dense in $L^2(V)$ we have that $T(C^{\infty}_c(V))$ is a dense subspace of $\mR^m$. Since $\mR^m$ is finite dimensional, it follows that $T(C^{\infty}_c(V)) = \mR^m$. We can thus choose $\eta_j \in C^{\infty}_c(V)$ with $T \eta_j = e_j$ for $1 \leq j \leq m$.
\end{proof}

\begin{proof}[Proof of Proposition \ref{prop_runge_approximation_cauchy}]
Let $F \in (H^s(U))^*$ satisfy $F( \mathcal{P}f|_U) = 0$ for all $f \in C^{\infty}_c(\Gamma)$. By the Hahn-Banach theorem, it is enough to show that $F(u) = 0$ for all $u \in \mathcal{S}$.

Let $(M,g) \subset \subset (M_1,g) \subset \subset (N,g)$, where $M_1$ is a compact manifold with boundary and $N$ is a closed connected manifold. Let $A$ be an elliptic second order operator in $N$ such that $Aw|_M = L(w|_M)$ for $w \in C^{\infty}(N)$. Also, define $\hat{F} \in H^{-s}(N)$ by 
\[
\hat{F}(w) = F(w|_U), \qquad w \in H^s(N).
\]
It follows that $\mathrm{supp}(\hat{F}) \subset \overline{U}$. If $\hat{F}$ is not orthogonal to the kernel $\mathcal{N}$ of $A$, we need to modify it outside $M_1$ as follows. Let $V$ be an open subset of $N \setminus M_1$, let $\{ \psi_1, \ldots, \psi_m \}$ and $\{ \eta_1, \ldots, \eta_m \}$ be as in Lemma \ref{lemma_fix_orthogonality}, and define
\begin{equation} \label{eta_correction_formula}
\eta = - \sum_{j=1}^m \hat{F}(\psi_j) \eta_j.
\end{equation}
We define $\tilde{F} \in H^{-s}(N)$ with $\mathrm{supp}(\tilde{F}) \subset \overline{U} \cup \overline{V}$ by 
\[
\tilde{F} = \hat{F} + \eta.
\]
Lemma \ref{lemma_fix_orthogonality} ensures that $\tilde{F}$ is $L^2$-orthogonal to $\mathcal{N}$.

Now, by Lemma \ref{lemma_negative_sobolev_solvability} there is a unique solution $v \in H^{-s+2}(N)$ of 
\[
A^* v = \tilde{F} \text{ in $N$}.
\]
It will be convenient to choose smooth approximations $\tilde{F}_j = \hat{F}_j + \eta_j$, where $\hat{F}_j \in C^{\infty}_c(M^{\mathrm{int}})$ has support near $\overline{U}$ and $\hat{F}_j \to \hat{F}$ in $H^{-s}(N)$ and $\eta_j \in C^{\infty}_c(V)$ is defined as in \eqref{eta_correction_formula} but with $\hat{F}$ replaced by $\hat{F}_j$. Then $\tilde{F}_j \to \tilde{F}$ in $H^{-s}(N)$ as $j \to \infty$. We let $v_j \in C^{\infty}(N)$ solve 
\[
A^* v_j = \tilde{F}_j \text{ in $N$}.
\]

In particular, one has $A^* v = 0$ near $\partial M$, so $v$ is $C^{\infty}$ near $\partial M$ by elliptic regularity. We will next solve 
\begin{gather*}
L^* w_j = 0 \text{ in $M$}, \quad w_j|_{\partial M} = v_j|_{\partial M}, \\
L^* w = 0 \text{ in $M$}, \quad w|_{\partial M} = v|_{\partial M}.
\end{gather*}
Define $\varphi_j = v_j|_M - w_j$ and $\varphi = v|_M - w$. It follows that $\varphi_j \in C^{\infty}(M)$ and $\varphi \in H^{-s+2}(M)$ solve 
\begin{gather*}
L^* \varphi_j = \tilde{F}_j \text{ in $M^{\mathrm{int}}$}, \quad \varphi_j|_{\partial M} = 0, \\
L^* \varphi = \tilde{F} \text{ in $M^{\mathrm{int}}$}, \quad \varphi|_{\partial M} = 0.
\end{gather*}

Let $E$ be a bounded extension operator $H^s(M) \to H^s(N)$ that satisfies $\mathrm{supp}(Ew) \subset M_1^{\mathrm{int}}$ for all $w \in H^s(M)$. Since $F(\mathcal{P} f|_U) = 0$ for all $f \in C^{\infty}_c(\Gamma)$, we have 
\begin{align*}
0 &= F(\mathcal{P} f|_U) = \tilde{F}(E(\mathcal{P} f)) = \lim\,\tilde{F}_j(E(\mathcal{P} f))\\
 &= \lim\,(A^* v_j, E(\mathcal{P} f))_{N}.
\end{align*}
In the last expression, $E(\mathcal{P} f)$ is supported in $M_1^{\mathrm{int}}$ and $A^* v_j|_{M_1}$ vanishes outside a neighborhood of $\overline{U}$. Thus this expression can be understood as an integral over $M$. Integrating by parts, we get 
\begin{align*}
0 &= \lim\,(A^* v_j, \mathcal{P} f)_M = \lim\,(A^* \varphi_j, \mathcal{P} f)_M \\
 &= \lim\, (\partial_{\nu}^{A^*} \varphi_j, f)_{\partial M}.
\end{align*}
Here $\partial_{\nu}^{A^*}$ is the normal derivative associated with $A^*$, and we used that $\varphi_j|_{\partial M} = 0$. Now, if $W$ is a small neighborhood of $\partial M$, the higher regularity estimate in Lemma \ref{lemma_negative_sobolev_solvability} implies that $v_j \to v$ in $H^m(W)$ for any $m \geq 0$. Thus one has $\partial_{\nu}^{A^*} \varphi_j \to \partial_{\nu}^{A^*} \varphi$ in $H^m(\partial M)$ for any $m \geq 0$, and consequently 
\[
0 = (\partial_{\nu}^{A^*} \varphi, f)_{\partial M}.
\]
Since the previous result holds for all $f \in C^{\infty}_c(\Gamma)$, we see that $\varphi$ solves 
\[
L^* \varphi = 0 \text{ in } M^{\mathrm{int}} \setminus \overline{U}, \quad \varphi|_{\partial M} = 0, \quad \partial_{\nu}^{A^*} \varphi|_{\Gamma} = 0.
\]
Also, $\varphi$ is smooth in $M \setminus \overline{U}$. The unique continuation principle implies that $\varphi|_{M \setminus \overline{U}} = 0$. Thus $\varphi$ may be identified with an element of $H^{-s+2}_{\overline{U}}(N)$ (the distributions in $H^{-s+2}(N)$ supported in $\overline{U}$), and it follows that there exist $\psi_j \in C^{\infty}_c(U)$ with $\psi_j \to \varphi$ in $H^{-s+2}(N)$ (see e.g.\ \cite[Theorem 3.29]{McLean}).

Finally, let $u \in \mathcal{S}$, and let $E$ be a bounded extension operator from $H^s(U)$ to $H^s(N)$ so that $\mathrm{supp}(Ew) \subset M^{\mathrm{int}}$ for any $w \in H^s(U)$. Then, since  $\varphi \in H^{-s+2}_{\overline{U}}(N)$, 
\begin{align*}
F(u) &= \tilde{F}(Eu) = (A^* \varphi, Eu)_{N} \\
 &= \lim\, (A^* \psi_j, Eu)_{N} = \lim\, (A^* \psi_j, u)_{U}.
\end{align*}
Using that $\psi_j \in C^{\infty}_c(U)$ and $L u = 0$ in $U$, we may integrate by parts in $U$ and obtain that $F(u) = 0$ for all $u \in \mathcal{S}$. This concludes the proof.
\end{proof}

As a consequence of the Runge approximation property we can find global harmonic functions with prescribed $2$-jet at a given point, up to the natural restrictions given by the equation $\Delta_g u=0$. The existence of a local harmonic function with this property is found e.g.\ in \cite[Lemma A.1.1]{Wood_book}), but we give the details for completeness.

\begin{Proposition} \label{runge_prescribed_jet}
Let $\Gamma$ be a nonempty open subset of $\partial M$. Let $p \in \Mi \cup \Gamma$, let $a_0 \in \mR$, let $\xi_0 \in T_p^* M$, and let $H_0$ be a symmetric $2$-tensor at $p$ satisfying $\mathrm{Tr}_g(H_0) = 0$. There exists $f \in C^{\infty}_c(\Gamma)$ such that the solution of 
\[
-\Delta_g u = 0 \text{ in $M$}, \qquad u|_{\p M} = f
\]
satisfies $u(p) = a_0$, $du(p) = \xi_0$, and $\mathrm{Hess}_g(u)|_p = H_0$.
\end{Proposition}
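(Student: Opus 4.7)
I would combine local existence of a harmonic function with prescribed $2$-jet (a standard fact available in the literature) with the Runge approximation result of Proposition~\ref{prop_runge_approximation_cauchy}, and upgrade ``dense'' to ``equal'' using the fact that a dense linear subspace of a finite-dimensional vector space is the whole space. The natural target space of admissible $2$-jets at $p$ is
\[
V := \bigl\{(a,\xi,H) \in \mathbb{R} \oplus T_p^* M \oplus \mathrm{Sym}^2 T_p^* M : \mathrm{Tr}_g H = 0\bigr\},
\]
a subspace of dimension $N = 1+n+\tfrac{n(n+1)}{2}-1$. The evaluation map
\[
T : C_c^\infty(\Gamma) \to V, \qquad T(f) = \bigl(u_f(p),\, du_f(p),\, \mathrm{Hess}_g(u_f)|_p\bigr),
\]
is well-defined (since $\mathrm{Tr}_g \mathrm{Hess}_g(u_f)(p) = \Delta_g u_f(p) = 0$) and linear. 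It suffices to show its image $W := T(C_c^\infty(\Gamma))$ equals $V$, at which point the claim is immediate.

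First I would show $W$ is dense in $V$. Fix an admissible $2$-jet $(a_0,\xi_0,H_0) \in V$. By \cite[Lemma~A.1.1]{Wood_book}, there exist a neighbourhood $U'$ of $p$ in $M$ and a harmonic function $v \in C^\infty(\overline{U'})$ with $j_p^2(v) = (a_0,\xi_0,H_0)$. Assuming first $p \in M^{\mathrm{int}}$, shrink $U'$ to a smoothly bounded domain $U \subset\subset M^{\mathrm{int}}$ with $p \in U$ and $M \setminus \overline{U}$ connected. Choose an integer $s > n/2 + 2$, so that Sobolev embedding gives a continuous inclusion $H^s(U) \hookrightarrow C^2(\overline{U})$. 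Applying Proposition~\ref{prop_runge_approximation_cauchy} with $L = \Delta_g$ produces a sequence $f_k \in C_c^\infty(\Gamma)$ with $u_{f_k} \to v$ in $H^s(U)$, hence in $C^2$ near $p$. In particular $T(f_k) \to (a_0,\xi_0,H_0)$ in $V$, so $W$ is dense.

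Because $W$ is a linear subspace of the finite-dimensional space $V$, density forces $W = V$. Thus some $f \in C_c^\infty(\Gamma)$ realises the prescribed $2$-jet exactly.

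The main technical obstacle is the boundary case $p \in \Gamma$, for which Proposition~\ref{prop_runge_approximation_cauchy} as stated does not directly apply (it requires $U \subset\subset M^{\mathrm{int}}$). I would handle this by extending $(M,g)$ smoothly across $\Gamma$ to a slightly larger manifold $\tilde M$ in which $p$ becomes an interior point; for any $f \in C_c^\infty(\Gamma)$ the harmonic extension $u_f$ is smooth up to $\Gamma$ near $p$ (since $\mathrm{supp}(f)$ may be taken in $\Gamma$), and the same duality/unique-continuation argument underlying Proposition~\ref{prop_runge_approximation_cauchy} yields a corresponding Runge statement in the extended setting. Once this technicality is in place, the three-step argument above applies verbatim; the essential simplification is that all ``error terms'' are killed by the finite-dimensional density step.
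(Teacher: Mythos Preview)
Your proposal is correct and follows essentially the same architecture as the paper: local existence of a harmonic function with prescribed $2$-jet, followed by Runge approximation (Proposition~\ref{prop_runge_approximation_cauchy}) in a high enough Sobolev index to get $C^2$ convergence, followed by the finite-dimensional density trick to upgrade ``dense'' to ``onto''. The paper, however, does not cite \cite[Lemma~A.1.1]{Wood_book} but instead proves the local existence statement from scratch (its Steps~1--3), via a frozen-coefficient quadratic in normal coordinates and a scaling/perturbation argument; you legitimately shortcut this. For the boundary case $p\in\Gamma$, the paper's reduction is cleaner than what you sketch: rather than adapting the Runge proof, it extends $M$ to a slightly larger $M_1$ with $p\in M_1^{\mathrm{int}}$ and $\partial M\setminus\Gamma\subset\subset\partial M_1$, applies the already-proved interior case in $M_1$ with boundary data supported away from $\partial M\setminus\Gamma$, and then simply restricts the resulting $u_1$ to $M$, obtaining $f=u_1|_{\partial M}\in C_c^\infty(\Gamma)$ directly. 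This avoids reopening the duality argument and is worth adopting.
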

\begin{proof}
It is enough to do the proof in the case where $p \in \Mi$. For if this has been done, and if $p \in \Gamma$, we may extend the manifold $M$ near $p$ to a slightly larger manifold $M_1$ so that $p \in \Mi_1$ and $\partial M \setminus \Gamma \subset \subset \partial M_1$. Since $p$ is an interior point of $M_1$, we can find a harmonic function $u_1$ in $M_1$ having the correct second order Taylor expansion at $p$ and satisfying $u_1|_{\partial M \setminus \Gamma} = 0$. Choosing $f = u_1|_{\partial M}$ implies that the solution of $-\Delta_g u = 0$ in $M$ with $u|_{\partial M} = f$ satisfies $u = u_1|_M$ and has the required behaviour at $p$.

Thus, assume that $p \in \Mi$. The proof will be given in four steps. \\

{\it Step 1.} First we find a local $g(p)$-harmonic function with prescribed $2$-jet at $p$. Let $x = (x^1, \ldots, x^n)$ be normal coordinates in a small geodesic ball $U$ centered at $p$. In these coordinates, let $\xi_0 = (\xi_0)_j \,dx^j|_p$ and $H_0 = H_{jk} \,dx^j \otimes dx^k|_p$. Define the function 
\[
u_0: U \to \mR, \ \ u_0(\exp_p(x^j \partial_j|_p)) = a_0 + (\xi_0)_j x^j + \frac{1}{2} H_{jk} x^j x^k.
\]
Clearly $u_0 \in C^{\infty}(\overline{U})$, $u_0(p) = a_0$ and $du_0(p) = \xi_0$. The Hessian, computed in normal coordinates, is given by 
\begin{align*}
\mathrm{Hess}_g(u_0)|_p &= (\partial_{x_j x_k} u_0 - \Gamma_{jk}^l \partial_{x_l} u_0) \,dx^j \otimes dx^k|_p = H_0
\end{align*}
since $\Gamma_{jk}^l|_p = 0$ in normal coordinates. Since $H_0$ is trace free it follows that $\Delta_{g(p)} u_0 = \sum_{j=1}^n H_{jj} = 0$ in $U$, i.e.\ $u_0$ is harmonic in $U$ with respect to the metric $g(p)$ with coefficients frozen at $p$. \\

{\it Step 2.} Next we find a local $g$-harmonic function near $p$ with $2$-jet $(0, 0, H_0)$. This is done by perturbing the functions $u_0$ from Step 1 in small balls, see e.g.\ \cite[Proposition 5.10.4]{Taylor}. Let $u \in C^{\infty}(\overline{B(p,\eps)})$ solve 
\[
\Delta_g u = 0 \text{ in $B(p,\eps)$}, \qquad u|_{\p B(p,\eps)} = u_0
\]
where $u_0(x) = \frac{1}{2} H_{jk} x^j x^k$ in normal coordinates. We may rescale $\tilde{x} = x/\eps$, $\tilde{u}(\tilde{x}) = \eps^{-2} u(\eps \tilde{x})$, so that $\tilde{u}$ solves (with derivatives taken with respect to $\tilde{x}$) 
\[
(g^{jk}(\eps \tilde{x}) \partial_{jk} + \eps \Gamma^l(\eps \tilde{x}) \partial_l) \tilde{u} = 0 \text{ in $B_1$}, \qquad \tilde{u}|_{\p B_1} = \tilde{u}_0|_{\p B_1} = \frac{1}{2} H_{jk} \tilde{x}^j \tilde{x}^k.
\]
Writing $\tilde{u} = \tilde{u}_0 + \tilde{w}$ where $\tilde{u}_0(\tilde{x}) = \eps^{-2} u_0(\eps \tilde{x}) = u_0(\tilde{x})$, and using that $g^{jk}(0) \partial_{jk} \tilde{u}_0 = 0$, we see that $\tilde{w}$ solves 
\[
(g^{jk}(\eps \tilde{x}) \partial_{jk} + \eps \Gamma^l(\eps \tilde{x}) \partial_l) \tilde{w} = \tilde{F} \text{ in $B_1$}, \qquad \tilde{w}|_{\p B_1} = 0
\]
where $\norm{\tilde{F}}_{H^{n/2+3}(B_1)} \leq C \eps$ since $g$ is smooth. By elliptic regularity (where the constants are uniform with respect to $0 < \eps < 1$) and Sobolev embedding, $\norm{\tilde{w}}_{C^2(\overline{B_1})} \leq C \eps$ where $C$ is uniform over small $\eps$. Then also $\norm{u - u_0}_{C^2(\overline{B(p,\eps)})} \leq C \eps$, which shows that there are local harmonic functions near $p$ with $2$-jet arbitrarily close to $(0, 0, H_0)$ at $p$.

We can make the $2$-jet at $p$ exactly equal to $(0, 0, H_0)$ as follows. Consider the operator 
\begin{gather*}
S: \{ \text{local harmonic functions near $p$} \} \to \mR \times T_p^* M \times (S^2_{\mathrm{tf}})_p M, \\
 Su = (u(p), du(p), \mathrm{Hess}_g(u)|_p)
\end{gather*}
where $(S^2_{\mathrm{tf}})_p M$ is the space of trace free symmetric $2$-tensors at $p$. The range of $S$ is a linear subspace of the finite-dimensional space $\mR \times T_p^* M \times (S^2_{\mathrm{tf}})_p M$, hence $\mathrm{Ran}(S)$ is closed. For any $H_0 \in (S^2_{\mathrm{tf}})_p M$ one has $(0, 0, H_0) \in \overline{\mathrm{Ran}(S)}$, and thus there is a local harmonic function near $p$ with $2$-jet $(0, 0, H_0)$. \\

{\it Step 3.} We will next find a local $g$-harmonic function near $p$ with $2$-jet $(a_0, \xi_0, H_0)$. In fact, the argument in Step 2 with the choice $u_0(x) = (\xi_0)_j x^j$ and scaling $\tilde{u}(\tilde{x}) = \eps^{-1} u(\eps \tilde{x})$ leads to local harmonic functions with $1$-jet at $p$ first arbitrarily close to $(0, \xi_0)$, and then exactly equal to $(0, \xi_0)$ as in the end of Step 2. Adding one of the functions obtained in Step 2 yields a local harmonic function with $2$-jet $(0, \xi_0, H_0)$, and adding a constant gives the $2$-jet $(a_0, \xi_0, H_0)$. \\

{\it Step 4.} Finally, to find a global harmonic function with prescribed $2$-jet at $p$, consider the operator 
\[
T: C^{\infty}_c(\Gamma) \to \mR \times T_p^* M \times (S^2_{\mathrm{tf}})_p M, \ \ f \mapsto (u_f(p), du_f(p), \mathrm{Hess}_g(u_f)|_p)
\]
where $u_f$ is the harmonic function in $M$ with $u|_{\p M} = f$. Given any $(a_0, \xi_0, H_0) \in \mR \times T_p^* M \times (S^2_{\mathrm{tf}})_p M$, Step 3 shows that there is a local harmonic function $u_0$ in a small geodesic ball $U = B_{\eps}(p)$ having $2$-jet $(a_0,\xi_0, H_0)$ at $p$. Now Proposition \ref{prop_runge_approximation_cauchy} implies that there is a sequence $(f_j) \subset C^{\infty}_c(\Gamma)$ such that $u_{f_j}|_U \to u_{0}$ in $H^{n/2+3}(U)$, hence $u_{f_j} \to u_0$ in $C^2(\overline{U})$ by Sobolev embedding. This shows that $T(C^{\infty}_c(\Gamma))$ is a dense subspace of $\mR \times T_p^* M \times (S^2_{\mathrm{tf}})_p M$, but since the last space is finite dimensional $T$ has to be surjective.
\end{proof}

We can now prove the consequence stated in the beginning of the section:

\begin{proof}[Proof of Proposition \ref{prop_runge_consequence}]
$\phantom{a}$ \\[5pt]
(a) First assume that both $x$ and $y$ are in $\Mi$. Let $U = B_1 \cup B_2$ where $B_1$ and $B_2$ are balls centered at $x$ and $y$, which are chosen in such a way that $U \subset \subset \Mi$ and $M \setminus \overline{U}$ is connected. Consider the harmonic function $u_0$ in $U$ with $u_0|_{B_1} = 1$ and $u_0|_{B_2} = 0$. By Proposition \ref{prop_runge_approximation_cauchy}, there exist $f_j \in C^{\infty}_c(\Gamma)$ such that 
\[
\norm{u_{f_j}|_U - u_0}_{H^{n/2+1}(U)} \to 0 \text{ as $j \to \infty$}.
\]
By Sobolev embedding we also have $\norm{u_{f_j}|_U - u_0}_{L^{\infty}(U)} \to 0$. Choosing $f = f_j$ for large enough $j$ implies that $u_f(x) \neq u_f(y)$.

Now assume that $x \in \Mi$ and $y \in \partial M$. We choose $\Gamma' \subset \subset \Gamma \setminus \{y\}$, and use the argument above to find $f \in C^{\infty}_c(\Gamma')$ with $u_f(x) \neq 0$. Then one also has $u_f(x) \neq u_f(y) = 0$. Next, the case where $x \in \Gamma$ and $y \in \Mi$ reduces to the previous case by interchanging $x$ and $y$. Finally, if $x \in \Gamma$ and $y \in \partial M$ with $x \neq y$, choose some $f \in C^{\infty}_c(\Gamma)$ with $f(x) \neq f(y)$ to obtain that $u_f(x) \neq u_f(y)$ as required. \\

\noindent (b) The result follows from Proposition \ref{runge_prescribed_jet}.
\end{proof}

Finally, let us give a Runge approximation result for a linear source problem used for studying the inverse problem for nonlinear equations.

\begin{Proposition} \label{prop_runge_approximation_cauchy_source}
Let $(M,g)$ be a compact manifold with boundary and let $s \geq 1$. Let $W$ be an open subset of $M$, and let $U \subset \subset M^{\mathrm{int}}$ be a domain with $C^{\infty}$ boundary such that $M \setminus \overline{U}$ is connected and $\overline{W} \cap \overline{U} = \emptyset$. Let also $L$ be a second order uniformly elliptic differential operator on $M$ which is injective on $C^{\infty}(M) \cap H^1_0(M)$, and let $K: C^{\infty}_c(W) \to C^{\infty}(M), \ f \mapsto u$ be the solution operator for the problem
\begin{align*}
Lu=f \text{ in } M, \qquad u|_{\p M} = 0.
\end{align*}
Then the set 
\[
\mathcal{R} = \{ Kf|_{U} \,;\, f \in C^{\infty}_c(W) \}
\]
is dense in the space $\mathcal{S} = \{ u \in H^s(U) \,:\, L u = 0 \text{ in $U$} \}$ with respect to the $H^s(U)$ norm.
\end{Proposition}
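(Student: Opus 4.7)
The plan is to imitate verbatim the duality scheme of Proposition~\ref{prop_runge_approximation_cauchy}, with the harmonic Poisson operator $\mathcal{P}$ replaced by the source operator $K$, and with the Dirichlet data $f \in C^\infty_c(\Gamma)$ replaced by sources $f \in C^\infty_c(W)$. By Hahn--Banach it suffices to show that any $F \in (H^s(U))^*$ that vanishes on $\mathcal{R}$ also vanishes on $\mathcal{S}$. Embed $(M,g) \subset\subset (M_1,g) \subset\subset (N,g)$ and extend $L$ to a second order elliptic operator $A$ on $N$, exactly as in the proof of Proposition~\ref{prop_runge_approximation_cauchy}. Define $\hat{F} \in H^{-s}(N)$ by $\hat{F}(w)=F(w|_U)$, so $\mathrm{supp}(\hat F)\subset \overline U$, and correct it to $\tilde F=\hat F+\eta$ using Lemma~\ref{lemma_fix_orthogonality} with an open set $V\subset N\setminus M_1$ chosen disjoint from $\overline W$ as well, so that $\tilde F$ is $L^2$-orthogonal to $\ker A$.

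By Lemma~\ref{lemma_negative_sobolev_solvability} there is $v\in H^{-s+2}(N)$ with $A^* v=\tilde F$; moreover the higher regularity estimate shows that $v$ is $C^\infty$ near $\partial M$ (since $\tilde F$ vanishes there). Solve $L^* w=0$ in $M$ with $w|_{\partial M}=v|_{\partial M}$ and set $\varphi=v|_M-w\in H^{-s+2}(M)$, so $L^*\varphi=\hat F|_{M^{\mathrm{int}}}$ and $\varphi|_{\partial M}=0$. Choosing smooth approximants $\hat F_j\in C^\infty_c(M^{\mathrm{int}})$ to $\hat F$ supported near $\overline U$ and correspondingly $\tilde F_j, v_j, \varphi_j$ yields $\varphi_j\in C^\infty(M)$ with $\varphi_j\to\varphi$ in $H^{-s+2}(M)$ and $\varphi_j\to\varphi$ in $C^m$ on a neighbourhood of $\partial M$ for every $m$.

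Now exploit the hypothesis: for $f\in C^\infty_c(W)$, let $E$ be a bounded extension from $H^s(U)$ to $H^s(M)$ with $\mathrm{supp}(Ew)\subset M^{\mathrm{int}}$; since $\overline W\cap\overline U=\emptyset$ and $V$ was chosen disjoint from $\overline W$, one has $\tilde F(E(Kf))=\hat F(E(Kf))=F(Kf|_U)=0$. Passing to $\tilde F_j$ and integrating by parts on $M$, the fact that both $\varphi_j|_{\partial M}=0$ and $Kf|_{\partial M}=0$ makes all boundary contributions vanish, so $0=(L^*\varphi_j,Kf)_M=(\varphi_j,L(Kf))_M=(\varphi_j,f)_M\to(\varphi,f)_M$. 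Hence $\varphi|_W=0$.

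Since $L^*\varphi=0$ in $M^{\mathrm{int}}\setminus\overline U$ (which is connected because $U\subset\subset M^{\mathrm{int}}$ and $M\setminus\overline U$ is connected), $\varphi|_{\partial M}=0$, and $\varphi$ vanishes on the nonempty open set $W\subset M^{\mathrm{int}}\setminus\overline U$, unique continuation gives $\varphi\equiv 0$ on $M\setminus\overline U$. Thus $\varphi\in H^{-s+2}_{\overline U}(M)$, and by \cite[Theorem 3.29]{McLean} there exist $\psi_j\in C^\infty_c(U)$ with $\psi_j\to\varphi$ in $H^{-s+2}$. Finally, for any $u\in\mathcal{S}$, extending to $Eu\in H^s(M)$ with compact support in $M^{\mathrm{int}}$ and using that $L^*\psi_j$ is supported in $U$, we obtain
\[
F(u)=\hat F(Eu)=\tilde F(Eu)=\lim_j(L^*\psi_j,Eu)_M=\lim_j(L^*\psi_j,u)_U=\lim_j(\psi_j,Lu)_U=0,
\]
the last step by $Lu=0$ in $U$ and $\psi_j$ compactly supported in $U$. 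The main technical step is the solvability of the adjoint Dirichlet problem for $L^*$ with rough data supported in $\overline U$ — handled by passing to the closed manifold $N$ and using Lemmas~\ref{lemma_negative_sobolev_solvability}--\ref{lemma_fix_orthogonality} exactly as in Proposition~\ref{prop_runge_approximation_cauchy}; everything else is straightforward because the disjointness $\overline W\cap\overline U=\emptyset$ makes the pairing $F(Kf|_U)$ behave as cleanly as $F(\mathcal P f|_U)$ did in the Dirichlet case.
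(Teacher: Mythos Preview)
Your proof is correct and follows essentially the same duality argument as the paper's own proof: Hahn--Banach reduction, embedding into a closed manifold $N$, correcting $\hat F$ to $\tilde F$ via Lemma~\ref{lemma_fix_orthogonality}, solving $A^*v=\tilde F$ and forming $\varphi=v|_M-w$, showing $(\varphi,f)_W=0$ by integrating by parts (using that both $Kf$ and $\varphi_j$ vanish on $\partial M$), invoking unique continuation from $W$ to conclude $\varphi\in H^{-s+2}_{\overline U}$, and finishing with the $\psi_j\in C^\infty_c(U)$ approximation. The only cosmetic difference is that the paper justifies $(\varphi_j,f)_W\to(\varphi,f)_W$ via the higher regularity estimate of Lemma~\ref{lemma_negative_sobolev_solvability} on $W$, whereas you assert the global convergence $\varphi_j\to\varphi$ in $H^{-s+2}(M)$; the paper's route is slightly cleaner when $-s+2<0$, but your version is easily repaired the same way since you already record the $C^m$ convergence near $\partial M$.
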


\begin{proof}
Let $F \in (H^s(U))^*$ satisfy $F(Kf|_U) = 0$ for all $f \in C^{\infty}_c(W)$. By the Hahn-Banach theorem, it is enough to show that $F(u) = 0$ for all $u \in \mathcal{S}$. As in the proof of Proposition~\ref{prop_runge_approximation_cauchy}, we take $(M,g) \subset \subset (M_1,g) \subset \subset (N,g)$ and extend $L$ to an elliptic second order operator $A$ on $N$. Given $F \in (H^s(U))^*$, we define $\hat{F} \in H^{-s}(N)$, $\mathrm{supp}(\hat{F}) \subset \overline{U}$, by 
\[
\hat{F}(w) = F(w|_U), \qquad w \in H^s(N). 
\]

Continuing as in the proof of Proposition~\ref{prop_runge_approximation_cauchy}, we may find $\tilde{F}\in H^{-s}(N)$ so that
\[
\tilde{F}=\hat{F}+\eta
\]
with $\textrm{supp}(\eta) \subset V$, where $V\subset N\setminus M_1$, and $\tilde{F}$ is $L^2$-orthogonal to the kernel of $A$. Moreover we may find $\tilde{F}_j\in C_c^\infty(M^{\textrm{int}}\cup V)$ with $\tilde{F}_j\to \tilde{F}$ in $H^{-s}(N)$, and using the same procedure as in the proof of Proposition \ref{prop_runge_approximation_cauchy} we can represent $\tilde{F}$ and $\tilde{F}_j$ as 
\[
A^* v = \tilde{F} \text{ in $N$}, \qquad A^* v_j = \tilde{F}_j \text{ in $N$}.
\]
Similarly, in $M$ we have 
\begin{gather*}
L^* \varphi_j = \tilde{F}_j \text{ in $M^{\mathrm{int}}$}, \quad \varphi_j|_{\partial M} = 0, \\
L^* \varphi = \tilde{F} \text{ in $M^{\mathrm{int}}$}, \quad \varphi|_{\partial M} = 0,
\end{gather*}
where $\varphi_j \in C^{\infty}(M)$, $\varphi \in H^{-s+2}(M)$, and $\varphi$ is smooth near $\p M$.

Let $E$ be a bounded extension operator $H^s(M) \to H^s(N)$ satisfying $\mathrm{supp}(Ew) \subset M_1^{\mathrm{int}}$ for all $w \in H^s(M)$. Since $F(Kf|_U) = 0$ for $f \in C^{\infty}_c(W)$, we have 
\begin{align*}
0 &= F(Kf|_U) = \tilde{F}(E(Kf)) = \lim\,\tilde{F}_j(E(Kf))\\
 &= \lim\,(A^* v_j, E(Kf))_{N}.
\end{align*}
In the last expression, $E(Kf)$ is supported in $M_1^{\mathrm{int}}$ and $A^* v_j|_{M_1}$ vanishes outside a neighborhood of $\overline{U}\subset M^{\mathrm{int}}$. Thus this expression can be understood as an integral over $M$. Integrating by parts, we get
\begin{align*}
0 &= \lim\,(L^* \varphi_j, Kf)_M = \lim (\varphi_j, LKf)_M =\lim (\varphi_j, f)_W.
\end{align*}
Here we used that $Kf$ and $\varphi_j$ vanish on $\p M$ and thus the boundary terms vanish. 
The higher regularity estimate in Lemma \ref{lemma_negative_sobolev_solvability} implies that 
$\varphi_j \to  \varphi$
in $H^m(W)$ for any $m \geq 0$, and consequently 
\[
0 = (\varphi, f)_W.
\]
Since the previous result holds for all $f \in C^{\infty}_c(W)$, we see that $\varphi$ solves 
\[
L^* \varphi = 0 \text{ in } M^{\mathrm{int}} \setminus \overline{U}, \quad \varphi|_W \equiv 0.
\]
Also, $\varphi$ is smooth in the connected set $M \setminus \overline{U}$. The unique continuation principle implies that $\varphi|_{M \setminus \overline{U}} = 0$. Thus $\varphi \in H^{-s+2}_{\overline{U}}(N)$. As in the proof of Proposition~\ref{prop_runge_approximation_cauchy}, we have $\psi_j\in C_c^\infty(U)$, $\psi_j\to \varphi$ in $H^{-s+2}(N)$. If $E$ denotes a bounded extension operator from $H^s(U)$ to $H^s(N)$ with $\mathrm{supp}(Ew) \subset M^{\mathrm{int}}$ for any $w \in H^s(U)$, we have
\begin{align*}
F(u) &= \tilde{F}(Eu) = (A^* v, Eu)_{N} = \lim\, (A^* \psi_j, Eu)_{N} = \lim\, (L^* \psi_j, u)_{U}.
\end{align*}
If $u\in \mathcal{S}$, we can integrate by parts to see that $F(u)=0$. 
\end{proof}

As a consequence, we obtain an analogue of Proposition \ref{prop_runge_consequence} where boundary sources are replaced by interior sources.

\begin{Proposition} \label{prop_runge_consequence_interior_source}
Let $(M,g)$ be a compact manifold with boundary, and let $L$ be a second order uniformly elliptic differential operator on $M$ which is injective on $C^{\infty}(M) \cap H^1_0(M)$. Let $W$ be a nonempty open subset of $M$, and denote by $u_f$ the solution of $Lu = f$ in $M$ with $u|_{\partial M} = 0$.
\begin{enumerate}
\item[(a)] 
If $x \in \Mi$, $y \in M$ and $x \neq y$, there is $f \in C^{\infty}_c(W)$ such that 
\[
u_f(x) \neq u_f(y).
\]
\item[(b)] 
If $x \in \Mi$ and $v \in T_x^* M$, there is $f \in C^{\infty}_c(W)$ such that 
\[
du_f(x) = v.
\]
\end{enumerate}
\end{Proposition}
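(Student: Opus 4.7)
My plan is to mimic the proof of Proposition~\ref{prop_runge_consequence}, replacing the boundary-source Runge approximation by its interior-source counterpart, Proposition~\ref{prop_runge_approximation_cauchy_source}. In both (a) and (b) I would first shrink $W$ to a nonempty open subset $\tilde W\subset W$ whose closure avoids the relevant points ($x$ in (b); $x$ and, if $y\in\Mi$, $y$ in (a)), which is possible since $\dim M\geq 2$ and $W$ is open. Since $C^\infty_c(\tilde W)\subset C^\infty_c(W)$, it suffices to produce the required source inside $\tilde W$.

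For (a), suppose first $y\in\Mi$. I would choose small disjoint geodesic balls $B_x$, $B_y$ around $x$, $y$ with $\overline{B_x\cup B_y}\subset\Mi$, $\overline{B_x\cup B_y}\cap\overline{\tilde W}=\emptyset$, $M\setminus\overline{B_x\cup B_y}$ connected, and set $U=B_x\cup B_y$. For sufficiently small radii, uniform ellipticity together with Poincar\'e's inequality renders the Dirichlet problem for $L$ on each ball uniquely solvable on $H^1_0\cap H^2$, and I would define $u_0\in C^\infty(\overline U)$ by $u_0|_{B_y}\equiv 0$ together with $u_0|_{B_x}$ the solution of $Lu_0=0$ with $u_0|_{\p B_x}\equiv 1$. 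The standard rescaling $\tilde u(\tilde y)=u_0(x+\eps\tilde y)$ reduces the problem on $B_x$ to an $O(\eps)$ perturbation of $g^{jk}(x)\p_{\tilde j\tilde k}\tilde u=0$ on $B_1$ with boundary data $1$, whose limit solution is $\tilde u\equiv 1$; hence $u_0(x)\neq 0=u_0(y)$ for small enough radii. Proposition~\ref{prop_runge_approximation_cauchy_source} then yields $f_j\in C^\infty_c(\tilde W)$ with $u_{f_j}|_U\to u_0$ in $H^s(U)$ for any fixed $s>n/2$, and Sobolev embedding upgrades this to uniform convergence on $\overline U$, so $u_{f_j}(x)\neq u_{f_j}(y)$ for $j$ large. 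When $y\in\p M$ we have $u_f(y)=0$ for every $f$, so I would drop $B_y$ from the construction and the same argument gives $u_{f_j}(x)\neq 0=u_{f_j}(y)$.

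For (b), fix $v\in T_x^*M$ and consider the linear map
\[
T:C^\infty_c(\tilde W)\to T_x^*M,\qquad T(f)=du_f(x).
\]
Since $\mathrm{Ran}(T)$ is a subspace of the finite-dimensional space $T_x^*M$ it is automatically closed, so it suffices to show $\mathrm{Ran}(T)$ is dense. In normal coordinates centered at $x$, I would solve $Lu^{(\eps)}=0$ in $B_\eps(x)$ with Dirichlet data $v_jy^j$ and rescale $\tilde u(\tilde y)=\eps^{-1}u^{(\eps)}(\eps\tilde y)$; a short computation shows that $\tilde u$ satisfies an $O(\eps)$ perturbation of $g^{jk}(x)\p_{\tilde j\tilde k}\tilde u=0$ on $B_1$ with boundary data $v_j\tilde y^j$, whose exact solution is $\tilde u_0(\tilde y)=v_j\tilde y^j$. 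Schauder estimates (as in Step~2 of the proof of Proposition~\ref{runge_prescribed_jet}) give $\tilde u\to\tilde u_0$ in $C^1(\overline{B_1})$, whence $du^{(\eps)}(x)=d\tilde u(0)\to v$ as $\eps\to 0$. Applying Proposition~\ref{prop_runge_approximation_cauchy_source} with $U=B_\eps(x)$ (choosing $\eps$ small enough that $\overline U\cap\overline{\tilde W}=\emptyset$ and $M\setminus\overline U$ is connected) and the Sobolev embedding $H^s(U)\hookrightarrow C^1(\overline U)$ for $s>n/2+1$, each $u^{(\eps)}$ is approximated in $C^1(\overline U)$ by $u_{f_j}|_U$ with $f_j\in C^\infty_c(\tilde W)$, so $\mathrm{Ran}(T)$ contains elements arbitrarily close to $v$. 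Density and closedness then force $\mathrm{Ran}(T)=T_x^*M$.

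The only nontrivial ingredients beyond Proposition~\ref{prop_runge_approximation_cauchy_source} itself are the two local constructions---a local $L$-solution separating the values at $x$ and $y$ in (a), and local $L$-solutions realizing an arbitrary prescribed $du(x)$ in (b)---and both are handled by the same rescaling trick already exploited in the proof of Proposition~\ref{runge_prescribed_jet}. I do not anticipate any serious conceptual obstacle; the main care required is the bookkeeping needed to verify the disjointness, regularity and connectedness conditions on $U$ and $\tilde W$ demanded by Proposition~\ref{prop_runge_approximation_cauchy_source}.
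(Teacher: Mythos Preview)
Your proposal is correct and follows essentially the same route as the paper: reduce to Proposition~\ref{prop_runge_approximation_cauchy_source} after producing local $L$-solutions with prescribed $0$- and $1$-jet, exactly as Proposition~\ref{prop_runge_consequence} was derived from Proposition~\ref{prop_runge_approximation_cauchy}. The only difference is that the paper outsources the local construction to \cite[Theorem~I.5.4.1]{BJS}, whereas you supply it directly via the rescaling/freezing argument already used in Steps~2--3 of Proposition~\ref{runge_prescribed_jet}; your version is more self-contained but otherwise identical in spirit.
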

\begin{proof}
The proof is analogous to that of Proposition \ref{prop_runge_consequence}, except that we need a version of Proposition \ref{runge_prescribed_jet} for $0$- and $1$-jets where $-\Delta_g$ is replaced by $L$ (this can be found in \cite[Theorem I.5.4.1]{BJS}), and we need to use the Runge approximation result in Proposition \ref{prop_runge_approximation_cauchy_source}.
\end{proof}

\section{Proofs of basic properties} \label{sec_appendix}

\subsection{On the embeddings $P$ and $R$}\label{proofs_of_smoothness}
We present here the proofs of the basic mapping properties of the Poisson embeddings $P$ and $R$.
\begin{proof}[Proof of Proposition~\ref{P_basics}]
 We first show that $P$ maps $M^\Gamma$ to $H^{-s}(\Gamma)$ when $s+1/2>n/2$. 
Let $x_0\in M^\Gamma$ and $f\in C^{\infty}_c(\Gamma)$. 
We have
 \[
 \abs{P(x)f}=\abs{u_f(x)}\leq \norm{u_f}_{L^\infty(M)}\leq C\norm{u_f}_{H^{s+1/2}(M)}\leq C'\norm{f}_{H^s(\partial M)}.
 \]
Here we used Sobolev embedding and a standard estimate for solutions of linear elliptic equations, see \cite[Proposition 5.11.2]{Taylor} (the result holds for noninteger $s$ by interpolation).  
 Since 
 \[
 H^{-s}(\Gamma) = (\widetilde{H}^s(\Gamma))^*
 \]
 where $\widetilde{H}^s(\Gamma)$ is the closure of $C^{\infty}_c(\Gamma)$ in $H^s(\partial M)$, see \cite[Theorem 3.3]{ChandlerWildeHewettMoiola} (the result is also true on manifolds), it follows that $P(M^\Gamma)\subset H^{-s}(\Gamma)$. 
 
 We show next that $P: M^\Gamma \to H^{-s-1}(\Gamma)$  is continuous. For this let $x_0\in M^\Gamma$, and let $\gamma$ be a smooth curve on $M$ with $\gamma(0)=x_0$. Since harmonic functions are $C^\infty$, we may use Taylor expansion to write
 \[
 (P(\gamma(t))-P(\gamma(0)))f=u_f(\gamma(t))-u_f(\gamma(0))=(du_f \cdot \dot{\gamma}(c_f(t)))t,
 \]
 where $c_f(t)\in [0,t]$ depends on $f\in \widetilde{H}^{s+1}(\Gamma)$. Then 
 \begin{align*}
  &\abs{(P(\gamma(t))-P(\gamma(0)))f}=\abs{(du_f \cdot \dot{\gamma}(c_f(t)))t} \\
  & \qquad \qquad \leq C \abs{du_f(\gamma(c_f(t)))} t \leq C \norm{du_f}_{L^{\infty}(M)} t \leq C \norm{du_f}_{H^{s+1/2}(M)} t  \\
  & \qquad \qquad \leq C \norm{u_f}_{H^{s+3/2}(M)} t \leq C \norm{f}_{H^{s+1}(\partial M)} t.
 \end{align*}
Thus $\norm{P(\gamma(t))-P(\gamma(0))}_{H^{-s-1}(\Gamma)} \leq C t$. In particular $P:M^\Gamma\to \mathcal{D}'(\Gamma)$ is continuous since the topology of $\mathcal{D}'(\Gamma)$ is weaker than that of $H^{-s-1}(\Gamma)$.

 To show that $P:M^\Gamma \to H^{-s-2}(\Gamma)$ is Fr\'echet differentiable, we proceed similarly by using Taylor expansion
 \[
 (P(\gamma(t))-P(\gamma(0)))f=u_f(\gamma(t))-u_f(\gamma(0))=du_f(\dot{\gamma}(0))t+\nabla^2u_f(c(t))t^2,
 \]
 where $\nabla^2 u_f(c(t))$ is a short hand notation for the corresponding quadratic form and $c(t)$ depends on $f\in H^s(\Gamma)$.
 Then
 \begin{align*}
 &\norm{P(\gamma(t))-P(\gamma(0)) -du_{(\cdot)}(\dot{\gamma}(0))t}_{H^{-s-2}(\Gamma)}=\sup_{\norm{f}_{\widetilde{H}^{s+2}(\Gamma)}=1}\abs{\nabla^2u_{f}(c(t))t^2} \\
 &\quad \leq C \sup_{\norm{f}_{\widetilde{H}^{s+2}(\Gamma)}=1}\norm{u_f}_{W^{2,\infty}(M)}t^2\leq C\sup_{\norm{f}_{\widetilde{H}^{s+2}(\Gamma)}=1}\norm{u_f}_{H^{s+2+1/2}(M)}t^2\\
 &\quad \leq C'\sup_{\norm{f}_{\widetilde{H}^{s+2}(\Gamma)}=1}\norm{f}_{H^{s+2}(\partial M)}t^2=C't^2.
 \end{align*}
 This shows that the Fr\'echet derivative $DP_{x_0}:T_{x_0}M\to H^{-s-2}(\Gamma)$ of $P$ is given by
 \[
 [DP_{x_0}(V)](f)=du_f(x_0)\cdot V.
 \]
 Consequently $P$ is Fr\'echet differentiable also as a mapping $P: M^\Gamma\to \mathcal{D'}(\Gamma)$ with derivative given above. The higher order Fr\'echet derivatives follow similarly by considering higher order terms in the Taylor expansion of $P(\gamma(t))$. 
\end{proof}

\begin{proof}[Proof of Proposition~\ref{R_basics}.]
The proof is completely analogous to the proof of Proposition~\ref{P_basics} above.
Let $x_0\in \Mi_1$ and $s+2>n/2$.
We have by elliptic regularity~\cite[Proposition 5.11.2]{Taylor} and by Sobolev embedding that 
 \[
 \abs{R(x)f}=\abs{u_f(x)}\leq \norm{u_f}_{L^\infty(M)}\leq C\norm{u_f}_{H^{s+2}(M)}\leq C'\norm{f}_{H^s(W)}.
 \]
 Thus $R$ maps $\Mi$ to $H^{-s}(W)$. 
 
 To show that $R: \Mi\to H^{-s-1}(W)$ is continuous let $f\in H^{s+1}(W)$ and let $\gamma$ be a smooth curve on $\Mi$ with $\gamma(0)=x_0$. We have 
 \[
 (R(\gamma(t))-R(\gamma(0)))f=u_f(\gamma(t))-u_f(\gamma(0))=(du_{f}\cdot\dot{\gamma}(c_f(t)))t,
 \]
 where $c_f(t)\in [0,t]$ depends on $f$. By elliptic regularity~\cite[Proposition 5.11.2]{Taylor} we have that
 \begin{align*}
  &\abs{(P(\gamma(t))-P(\gamma(0)))f}=\abs{du_{f}\cdot\dot{\gamma}(c_f(t))t} \\
  &\leq C\abs{du_{f}\cdot\dot{\gamma}(c_f(t))t}\leq C \norm{du_f}_{L^\infty(M)}t\leq C\norm{du_f}_{H^{s+2}(M)}t \\
  &\leq C\norm{u_f}_{H^{s+3}(M)}t\leq C\norm{f}_{H^{s+1}(W)}t.
 \end{align*}
 This shows that $R:\Mi\to H^{-s-1}(W)$ is continuous. By the same calculation, we see that $R$ can be extended continuously to a mapping $M\to \to H^{-s-1}(W)$ by defining $R|_{\p M}=0$.
 
 That $R$ is $k$ times Fr\'echet differentiable $R:\Mi \to H^{-s-1-k}(W)$, and $C^\infty$ smooth $R:\Mi\to \mathcal{D}'(W)$, follows as in the proof of Proposition~\ref{P_basics} above by using elliptic regularity~\cite[Proposition 5.11.2]{Taylor}.
\end{proof}

\subsection{Auxiliary results used in 2D Calder\'on problem}\label{2D_appx}
We prove the existence of the special isothermal coordinates used in Lemma~\ref{det_near_bndr_2d} to prove the determination result near the boundary for harmonic functions in dimension $2$.
\begin{proof}[Proof of Lemma~\ref{bndr_determination_sothermal_coordinates}]
  We first determine the metric on the boundary in boundary normal coordinates as usual:
 Let $p\in \Gamma$ and let $h\in C_c^\infty(\Gamma)$ be a function on $\Gamma\subset \p M$ with non-vanishing gradient near $p$ on $\Gamma$ and $h(p)=0$. Let $\psi_j$, $j=1,2$, be two sets of boundary normal coordinates defined on open neighborhoods $\Omega_j$ of $p$ on $M_j$ with $\psi_1|_{\Gamma\cap \Omega_1}=\psi_2|_{\Gamma\cap \Omega_2}=(h,0)$. By the result~\cite[p. 1106]{LU} in dimension $2$, the Dirichlet-to-Neumann map determines the metric on the boundary. Denoting the coordinates of $\psi_1(\Omega_1)\cap \psi_2(\Omega_2)\subset\R^2$ by $(x,t)$, with $x$ being the coordinate of $\Gamma$, this is
 \begin{equation}\label{2d_on_the_bndr}
 (\psi_1^{-1*}g_1)(x,0)=(\psi_2^{-1*}g_2)(x,0), \quad x\in \psi_1(\Gamma\cap \Omega_1)\cap \psi_2(\Gamma\cap \Omega_2). 
 \end{equation}
 We set $\Gamma'=\psi_1(\Gamma\cap \Omega_1)\cap \psi_2(\Gamma\cap \Omega_2)$.
 
 We next transform to isothermal coordinates. For this, let $u^j$, $j=1,2$, denote the globally harmonic functions on $(M_j,g_j)$ 
 solving
  \begin{align*}
  \Delta_{g_j}u^j&=0 \text{ on } M_j \\
  u^j&=h \text{ on } \p M 
  \end{align*}
  Since the gradient of $h$ is nonvanishing on $\Gamma$, so is the gradient of $u^j$ on $\Gamma$. Since $u^j$ are global harmonic functions, the fact that the DN maps agree on $\Gamma$ implies that
  \begin{equation}\label{C_data_for_u}
  \p_{\nu_1}u^1|_{\Gamma} = \p_{\nu_2}u^2|_{\Gamma}.
  \end{equation}
  
  Let $\tilde{u}^j = u^j\circ \psi_j^{-1}$, $j=1,2$, be the coordinate representation of $u^j$ in the boundary normal coordinates $\psi_j$. Since $\p_{\nu_j}=\p_t$ in coordinates $\psi_j$, we have by~\eqref{C_data_for_u} that
  $\p_t\tilde{u}^1=\p_t\tilde{u}^2$
  and thus 
  \begin{equation}\label{2d_gradients_on_bndr}
  d\tilde{u}^1=d\tilde{u}^2 \mbox{ on } \Gamma'.
  \end{equation}
  
  We form local harmonic conjugates to $\tilde{u}^j$ by solving
  \begin{equation} \label{isothermal_tilde_equation}
  \ast_j d\tilde{u}^j=d\tilde{v}^j, \quad j=1,2,
  \end{equation}
  for $\tilde{v}^j$ locally. Here $\ast_j$ is the Hodge star of (the coordinate representation of) the metric $g_j$. A local solution $\tilde{v}^j$ exists by Poincar\'e lemma since $\ast_j \tilde{u}^j$ is closed by the harmonicity of $\tilde{u}^j$ (the proof of the Poincar\'e lemma shows that $\tilde{v}^j$ is smooth up to the boundary). We may assume $\tilde{v}^j(0,0)=0$. Since now $\ast_j^2$ is $-\text{Id}$ on $1$-forms and $d^2=0$, we have that  $\tilde{v}^j$ is harmonic. Since near the origin $(0,0)$ we have $0\neq\abs{d\tilde{u}^j}^2_{g_j}dV_{g_j}=d\tilde{u}^j\wedge d\tilde{v}^j$, we have that $(\tilde{u}^j,\tilde{v}^j)$ is a coordinate system on a neighborhood $\Omega\subset \R^2 \cap \{ t \geq 0 \}$ of the origin $(0,0)=\psi_1(p)=\psi_2(p)$. Redefine $\Gamma'$ as $\Gamma'\cap \Omega$.
  
  Let us denote 
  \[
  T_j=(\tilde{u}^j,\tilde{v}^j):\Omega \to \R^2, \quad j=1,2.
  \]
  We have that the Jacobian matrices
  \[
  DT_j=[d\tilde{u}^j,d\tilde{v}^j]
  \]
  of $T_j$ agree on $\Gamma'$. This is due to~\eqref{2d_gradients_on_bndr} and because
  \begin{equation}\label{2d_gradients_on_bndr2}
  d\tilde{v}^1=\ast_1d\tilde{u}^1=\ast_2d\tilde{u}^2=d\tilde{v}^2
  \end{equation}
  holds since Hodge star operators $\ast_j$ agree in the coordinates $\psi_j$ by~\eqref{2d_on_the_bndr}.
  
  Finally we are ready to pass to isothermal coordinates
  \[
  U_j:=T_j\circ\psi_j.
  \]
  The coordinates $U_j = (u^j, v^j)$ are isothermal coordinates, since $\ast_{g_j} du^j = dv^j$ due to \eqref{isothermal_tilde_equation} and the coordinate invariance of $d$ and $\ast$.
  Even though we might have that the set $\Gamma'$ is not mapped to $\{ t=0 \}$ by $T_j$, we have
  \[
  T_1(x,0)=T_2(x,0), \quad x\in \Gamma'.
  \]
  This is because $\tilde{u}^1(x,0)=h(x)=\tilde{u}^2(x,0)$ and because
  \[
  \tilde{v}^1(x,0)=\int_0^x\frac{d}{ds}\tilde{v}^1(s,0) \,ds=\int_0^x\frac{d}{ds}\tilde{v}^2(s,0) \,ds=\tilde{v}^2(x,0).
  \]
  Here we used that the gradients $d\tilde{v}^j$ agree on $\Gamma'$ by~\eqref{2d_gradients_on_bndr2} and that we have $\tilde{v}^j(0,0)=0$. We set:
  \[
  \tilde{\Gamma}=T_1(\Gamma')=T_2(\Gamma')\subset \R^2.
  \]
  By defining $\Gamma_0:=\psi_1^{-1}(\Gamma')=\psi_2^{-1}(\Gamma')$, we have the property (1) of the claim.
  
  
Let $f\in C_c^\infty(\Gamma)$ and let $u_f^1$ and $u_f^2$ be the corresponding global harmonic functions on $(M_1,g_1)$ and $(M_2,g_2)$ respectively. Since the DN maps agree on $\Gamma$, one has $d\tilde{u}_1 = d\tilde{u}_2$ on the boundary near $0$ where $\tilde{u}_j = u_f^j \circ \psi_j^{-1}$ and $\psi_j$ are the boundary normal coordinates. Finally, we pass from boundary normal coordinates to isothermal coordinates by the maps $T_j$ constructed above. Since we have proved that the Jacobian matrices of $T_j$ agree near the origin on $\{ x_2 = 0 \}$, we have that the Cauchy data of $u_f^j$ agree in these isothermal coordinates. This is (2), which concludes the proof. 
\end{proof}

Finally, for completeness we give the standard characterization of isothermal coordinates in terms of the coordinate expression of the metric.

\begin{Lemma}\label{isotherm_char}
Let $(M,g)$ be a $2$-dimensional $C^\infty$ Riemannian manifold. Then a coordinate system $U=(u^1,u^2)$ satisfies 
\[
\ast du^1= du^2
 \]
if and only if $(U^{-1*} g)_{jk} = c \delta_{jk}$ for some smooth positive function $c$.
\end{Lemma}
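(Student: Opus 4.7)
My plan is to work directly in the coordinate chart $U=(u^1,u^2)$, identifying the open set with a subset of $\mathbb{R}^2$ carrying the metric $\tilde g = U^{-1*}g$ with components $\tilde g_{jk}$. In this picture, $du^1$ and $du^2$ are literally the coordinate differentials $dx^1$ and $dx^2$, so the condition $\ast du^1 = du^2$ becomes an algebraic condition on the coefficients $\tilde g_{jk}$. Both implications then reduce to a direct computation of $\ast dx^1$ in terms of the metric.

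First I would compute $\ast dx^1$ using the defining identity $\alpha\wedge \ast \beta=\langle\alpha,\beta\rangle_{\tilde g}\, dV_{\tilde g}$ together with the orientation given by $dx^1\wedge dx^2$. Writing $\ast dx^1 = a\,dx^1 + b\,dx^2$ and testing against $dx^1$ and $dx^2$ yields
\begin{equation*}
b = \tilde g^{11}\sqrt{|\tilde g|}, \qquad a = -\tilde g^{12}\sqrt{|\tilde g|},
\end{equation*}
so that $\ast dx^1 = \sqrt{|\tilde g|}\bigl(\tilde g^{11}\,dx^2 - \tilde g^{12}\,dx^1\bigr)$.

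For the forward implication, the identity $\ast dx^1 = dx^2$ forces $\tilde g^{12}=0$ and $\tilde g^{11}\sqrt{|\tilde g|}=1$. The first equation means $\tilde g_{12}=0$, and then $|\tilde g|=\tilde g_{11}\tilde g_{22}$ together with $\tilde g^{11}=\tilde g_{11}^{-1}$ turns the second equation into $\sqrt{\tilde g_{22}/\tilde g_{11}}=1$, i.e.\ $\tilde g_{11}=\tilde g_{22}$. Setting $c:=\tilde g_{11}$ then gives $(U^{-1*}g)_{jk} = c\,\delta_{jk}$, and $c$ is smooth and positive because $\tilde g$ is a smooth Riemannian metric.

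Conversely, if $\tilde g_{jk} = c\,\delta_{jk}$ with $c>0$, then $\tilde g^{jk}=c^{-1}\delta_{jk}$ and $\sqrt{|\tilde g|}=c$, so substituting into the formula above gives $\ast dx^1 = c^{-1}\cdot c\, dx^2 = dx^2$, which is $\ast du^1 = du^2$. I do not anticipate any real obstacle here; the only thing to be careful about is the sign/orientation convention in the Hodge star formula, which is pinned down by the choice of orientation $dx^1\wedge dx^2$ on the coordinate chart.
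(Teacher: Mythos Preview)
Your proof is correct and follows essentially the same approach as the paper: both compute the Hodge star in the coordinate chart via the defining identity $\alpha\wedge\ast\beta=\langle\alpha,\beta\rangle_g\,dV_g$ and read off the conditions on the metric components. The only organizational difference is that you first derive a closed formula $\ast dx^1=\sqrt{|\tilde g|}\,(\tilde g^{11}\,dx^2-\tilde g^{12}\,dx^1)$ and then compare, whereas the paper computes the three inner products $g^{11},g^{22},g^{12}$ one at a time via wedge identities (using also $\ast^2=-\mathrm{Id}$ to pass from $du^1$ to $du^2$); these are equivalent computations.
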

\begin{proof}
 Assume first that $(u^1,u^2)$ satisfies $du^1=\ast du^2$. Then we have 
 \[
 du^1\wedge du^2=du^1 \wedge \ast du^1 =g(du^1,du^1) \,dV_g= \abs{g}^{1/2}g(du^1,du^1) \,du^1\wedge du^2.
 \]
 We set $c=\abs{g}^{1/2}$ and it follows that
 \[
 |du^1|_g^2=c^{-1}.
 \]
 Since $\ast^2=-\mathrm{Id}$ for $1$-forms on a $2$-dimensional manifold, we also have $du^1=-\ast du^2$. Using this, the argument above gives $|du^2|_g^2=c^{-1}$. We also have $0=du^2\wedge du^2= du^2 \wedge \ast du^1=g(du^2,du^1) \,dV_g$. Thus the coordinate system is isothermal:
 \begin{equation} \label{metric_form_isothermal}
 g^{11}=g^{22}=c^{-1}, \quad g^{12}=g^{21}=0.
 \end{equation}
 
 To prove the opposite, we assume that the metric has the form \eqref{metric_form_isothermal} in the $(u^1,u^2)$ coordinates. Then, for all $1$-forms $\alpha = \alpha_1 \,du^1 + \alpha_2 \,du^2$, we have 
 \begin{equation}\label{c-inv_for_1-forms}
 \alpha\wedge *du^2=g(\alpha,du^2)dV_g=c^{-1}\abs{g}^{1/2} \alpha_2 \, du^1\wedge du^2= du^1 \wedge \alpha.
 \end{equation}
 Since this holds for all $1$-forms $\alpha$, we have that $du^1 = -\ast du^2$. 
\end{proof}

\subsection{Wellposedness and linearization of quasilinear equations}
 
This subsection contains the proofs of Propositions \ref{prop_nonlinear_wellposedness} and \ref{prop_nonlinear_linearization}.

\begin{proof}[Proof of Proposition \ref{prop_nonlinear_wellposedness}]
Let $X = C^{\alpha}(M)$, $Y = C^{2,\alpha}(M) \cap H^1_0(M)$ and $Z = C^{\alpha}(M)$ be Banach spaces, where the norm and the topology of $Y$ is given by the $C^{2,\alpha}(M)$ norm. Define the map 
\[
F: X \times Y \to Z, \ \ F(f, u) = Q(u) - f.
\]
Then $F(0,0) = 0$ by \eqref{b_assumption}. To check that $F$ is continuously differentiable, we use that the coefficients of $Q$ are $C^{2,\alpha}$ and 
note that 
\begin{align*}
 &F(f+h, u+v)-F(f,u) = Q(u+v) -Q(u) - h \\
 &= \int_0^1\int_0^t \frac{d^2}{ds^2}Q(u+sv) \,ds \,dt +t\frac{d}{dt}\Big|_{t=0}Q(u+tv)- h.
\end{align*}
The expression for $\frac{d^2}{ds^2} Q(u+sv)$ in terms of the coefficients $\mA$ and $\mB$ and derivatives of $v$ is long. It suffices to say that it is an expression that contains up to second order derivatives of $\mA^{ab}$ and $\mB$ evaluated at $(x,u+sv,du+sdv)$, and up second order derivatives of $u$ and $v$. Thus there is a constant $C$, uniform over $(u, v, s)$ in a bounded set of $Y \times Y \times \mR$, such that
\[
 \Norm{\frac{d^2}{ds^2}Q(u+sv)}_{C^\alpha(M)}\leq C\norm{v}_{C^{2,\alpha}(M)}.
\]
We have
\[
 \frac{d}{dt}\Big|_{t=0}Q(u+tv)=L_u,
\]
where $L_u: Y \to Z$ is the linearization of $Q$ at $u \in Y$, given by 
\begin{align*}
L_u v &= \mA^{ab}(x,u,du) \nabla_a \nabla_b v + \frac{\partial \mA^{ab}}{\partial u}(x,u,du) (\nabla_a \nabla_b u)v \\
 &\qquad + \frac{\partial \mA^{ab}}{\partial \sigma_j}(x,u,du) (\nabla_a \nabla_b u)\partial_j v + \frac{\partial \mB}{\partial u}(x,u,du) v + \frac{\partial \mB}{\partial \sigma_j}(x,u,du) \partial_j v.
\end{align*}
The linearization $L_u$ depends continuously on $u \in C^{2,\alpha}(M)$. It follows that $F$ is continuously Fr\'echet differentiable $X\times Y\to Z$ (in the sense of~\cite[Definition 10.2]{RR}).

By the assumption \eqref{l_assumption}, $L = L_0: Y\to Z$ is invertible. Thus, the implicit function theorem in Banach spaces \cite[Theorem 10.6]{RR} yields that there is $\eps>0$ and an open ball $B = B_X(0,\eps)\subset X$ and a continuously differentiable mapping $T: B \to Y$ such that whenever $\norm{f}_{C^\alpha(M)}\leq \eps$ we have
\[
 F(f,T(f))=0.
\]
Moreover, there is $\delta>0$ such that (by redefining $\eps$ if necessary) $T(f)$ is the only solution to $F(f,u)=0$ whenever $\norm{f}_{C^\alpha(M)}\leq \eps$ and $\norm{u}_{C^{2,\alpha}(M)}\leq \delta$. Moreover for such $u$ and $f$ we have
\[
 \norm{u}_{C^{2,\alpha}(M)} \leq C \norm{f}_{C^{\alpha}(M)},
\]
since $T$ is continuously differentiable.
\end{proof}

\begin{proof}[Proof of Proposition \ref{prop_nonlinear_linearization}]
Let $f \in C^{\infty}_c(W)$. For $t > 0$ small, let $u_t \in C^{\infty}(M)$ be the unique small solution given by Proposition \ref{prop_nonlinear_wellposedness} of 
\[
Q(u_t) = tf, \qquad u_t|_{\partial M} = 0,
\]
which satisfies the estimate 
\begin{equation} \label{ut_solution_estimate}
\norm{u_t}_{C^{2,\alpha}(M)} \leq Ct \norm{f}_{C^{\alpha}(M)}.
\end{equation}
We write $u_t = t v_t$, so that for $t > 0$ one has 
\[
\mA^{ab}(x,tv_t, td v_t) \nabla_a \nabla_b v_t + \frac{1}{t} \mB(x, tv_t, t d v_t) = f, \qquad v_t|_{\partial M} = 0.
\]
Let also $v_0$ be the unique solution of 
\[
Lv_0 = f \text{ in $M$}, \qquad v_0|_{\partial M} = 0.
\]
We wish to show that 
\[
v_t \to v_0 \text{ in $C^1(M)$ as $t \to 0$.}
\]
If this is true, then also 
\[
 \frac{S(tf) - S(0)}{t} = \frac{t v_t|_W}{t} \to v_0|_W \text{ in $C^1(\overline{W})$ as $t \to 0$},
\]
which proves the theorem.

We have 
\begin{align*}
0 &= \mA^{ab}(x,tv_t, td v_t) \nabla_a \nabla_b v_t + \frac{1}{t} \mB(x, tv_t, t d v_t) - L v_0 \\ 
 &= L(v_t - v_0) + F_t
\end{align*}
where 
\begin{align*}
F_t &= \mA^{ab}(x,tv_t, td v_t) \nabla_a \nabla_b v_t + \frac{1}{t} \mB(x, tv_t, t d v_t) - L v_t \\
 &=(\mA^{ab}(x,tv_t, td v_t) - \mA^{ab}(x,0,0)) \nabla_a \nabla_b v_t \\
 &\quad + \frac{1}{t} \mB(x, tv_t, t d v_t) - \frac{\partial \mB}{\partial u}(x,0,0) v_t - \frac{\partial \mB}{\partial \sigma_j}(x,0,0) \partial_j v_t \\
 &= \left( \int_0^t \left[ \frac{\partial \mA^{ab}}{\partial u}(x, sv_t, s d v_t) v_t + \frac{\partial \mA^{ab}}{\partial \sigma_j}(x, sv_t, s d v_t) \partial_j v_t \right] ds \right) \nabla_a \nabla_b v_t \\
 &\quad +\frac{1}{t}\left( \int_0^tds\int_0^s \frac{d^2}{dr^2} \mB(x,rv_t,rd v_t)dr\right) \\
 &= \left( \int_0^t \left[ \frac{\partial \mA^{ab}}{\partial u}(x, sv_t, s d v_t) v_t + \frac{\partial \mA^{ab}}{\partial \sigma_j}(x, sv_t, s d v_t) \partial_j v_t \right] ds \right) \nabla_a \nabla_b v_t  \\
 &\quad +\frac{1}{t} \Big( \int_0^tds\int_0^s \Big(\frac{\p^2}{\p u^2}\mB(x,rv_t,rd v_t)v_t^2 \\
 & + 2 \frac{\p^2}{\p u\p \sigma_j}\mB(x,rv_t,rd v_t)v_t\p_jv_t + \frac{\p^2}{\p\sigma_k\p\sigma_j}\mB(x,rv_t,rd v_t)\p_kv_t\p_jv_t\Big)dr\Big).
\end{align*}
In the second to last equality we used that $\mB(x,0,0)=0$. 

We wish to show that 
\[
\norm{F_t}_{L^{\infty}(M)} \leq Ct.
\]
This is obtained from the expression for $F_t$ given above, the estimate 
\[
\norm{v_t}_{C^{2,\alpha}(M)} \leq C \norm{f}_{C^{\alpha}(M)}
\]
which follows from \eqref{ut_solution_estimate}, and the fact that the integrals in the expression for $F_t$ are from $0$ to $t$ etc. Since one also has 
\[
L(v_t-v_0) = -F_t, \qquad v_t-v_0|_{\partial M} = 0,
\]
estimates for the linear equation imply that 
\[
\norm{v_t-v_0}_{C^1(M)} \leq Ct.
\]
This concludes the proof.
\end{proof}

\bibliographystyle{alpha}

\end{document}